\providecommand{\U}[1]{\protect\rule{.1in}{.1in}}
\newtheorem{theorem}{Theorem}[section]
\newtheorem{corollary}[theorem]{Corollary}
\newtheorem{definition}[theorem]{Definition}
\newtheorem{lemma}[theorem]{Lemma}
\newtheorem{proposition}[theorem]{Proposition}
\newtheorem{remark}[theorem]{Remark}
\newenvironment{proof}[1][Proof]{\noindent\textbf{#1.} }{\ \rule{0.5em}{0.5em}}
\begin{document}

\title{Minimization of quotients with variable exponents}
\author{C.O. Alves$^{\text{\thinspace a}}$, M.D.H. Bola\~{n}os$^{\text{\thinspace b}}%
$, G. Ercole$^{\text{\thinspace b}}$ \thanks{Corresponding author}\\{\small {$^{\mathrm{a}}$} Universidade Federal de Campina Grande, Campina
Grande, PB, 58109-970, Brazil. }\\{\small E-mail: coalves@dme.ufcg.edu.br}\\{\small {$^{\mathrm{b}}$} Universidade Federal de Minas Gerais, Belo
Horizonte, MG, 30.123-970, Brazil.}\\{\small E-mail: emdi\_82@hotmail.com }\\{\small E-mail:grey@mat.ufmg.br}}
\maketitle

\begin{abstract}
Let $\Omega$ be a bounded domain of $\mathbb{R}^{N}$, $p\in C^{1}%
(\overline{\Omega}),$ $q\in C(\overline{\Omega})$ and $l,j\in\mathbb{N}.$ We
describe the asymptotic behavior of the minimizers of the Rayleigh quotient
$\frac{\Vert\nabla u\Vert_{lp(x)}}{\Vert u\Vert_{jq(x)}}$, first when
$j\rightarrow\infty$ and after when $l\rightarrow\infty.$

\end{abstract}

\noindent\textbf{2010 AMS Classification.} 35B40; 35J60; 35P30.

\noindent\textbf{Keywords:} Asymptotic behavior, infinity Laplacian, variable exponents.

\section{Introduction}

Let $\Omega$ be a bounded domain of $\mathbb{R}^{N},$ $N\geq2,$ and consider
the Rayleigh quotient
\begin{equation}
\frac{\left\Vert \nabla u\right\Vert _{p(x)}}{\left\Vert u\right\Vert _{q(x)}%
}, \label{a}%
\end{equation}
associated with the immersion of the Sobolev space $W_{0}^{1,p(x)}(\Omega)$
into the Lebesgue space $L^{q(x)}(\Omega),$ where the variable exponents
satisfy%
\[
1<\inf_{\Omega}p(x)\leq\sup_{\Omega}p(x)<\infty
\]
and
\[
1<q(x)<p^{\ast}(x):=\left\{
\begin{array}
[c]{ccc}%
\frac{Np(x)}{N-p(x)} & \mathrm{if} & p(x)<N\\
\infty & \mathrm{if} & p(x)\geq N.
\end{array}
\right.
\]

In this paper we study the behavior of the least Rayleigh quotients when the
functions $p(x)$ and $q(x)$ become arbitrarily large. Our script is based on
the paper \cite{EP16}, where these functions are constants. Thus, in order to
overcome the difficulties imposed by the fact that the exponents depend on
$x$, we adapt arguments developed by Franzina and Lindqvist in \cite{FL13},
where $p(x)=q(x).$ Actually, our results in the present paper generalize those
of \cite{EP16} for variable exponents and complement the approach of
\cite{FL13}.

In \cite{EP16}, Ercole and Pereira first studied the behavior, when
$q\rightarrow\infty,$ of the positive minimizers $w_{q}$ corresponding to
\[
\lambda_{q}:=\min\left\{  \left\Vert \nabla u\right\Vert _{L^{p}(\Omega)}:u\in
W_{0}^{1,p}(\Omega)\quad\mathrm{in}\quad\left\Vert u\right\Vert _{L^{q}%
(\Omega)}=1\right\}  ,
\]
for a fixed $p>N.$ An $L^{\infty}$-normalized function $u_{p}\in W_{0}%
^{1,p}(\Omega)$ is obtained as the uniform limit in $\overline{\Omega}$ of a
sequence $w_{q_{n}},$ with $q_{n}\rightarrow\infty$. Such a function is
positive in $\Omega,$ assumes its maximum only at a point $x_{p}$ and
satisfies
\[
\left\{
\begin{array}
[c]{ll}%
-\Delta_{p}u=\Lambda_{p}\delta_{x_{p}} & \mathrm{in}\quad\Omega\\
u=0 & \mathrm{on}\quad\partial\Omega,
\end{array}
\right.
\]
where%
\[
\Lambda_{p}:=\min\left\{  \left\Vert \nabla u\right\Vert _{L^{p}(\Omega)}:u\in
W_{0}^{1,p}(\Omega)\quad\mathrm{in}\quad\left\Vert u\right\Vert _{L^{\infty
}(\Omega)}=1\right\}
\]
and $\delta_{x_{p}}$ denotes the Dirac delta distribution concentrated at
$x_{p}.$ In the sequence, the behavior of the pair $\left(  \Lambda_{p}%
,u_{p}\right)  ,$ as $p\rightarrow\infty,$ is determined. In fact, it is
proved that
\[
\lim_{p\rightarrow\infty}\Lambda_{p}=\Lambda_{\infty}:=\inf_{0\not \equiv v\in
W_{0}^{1,\infty}(\Omega)}\frac{\left\Vert \nabla v\right\Vert _{\infty}%
}{\left\Vert v\right\Vert _{\infty}}%
\]
and that there exist a sequence $p_{n}\rightarrow\infty,$ a point $x_{\ast}%
\in\Omega$ and a function $u_{\infty}\in W_{0}^{1,\infty}(\Omega)\cap
C(\overline{\Omega})$ such that: $x_{p_{n}}\rightarrow x_{\ast},$ $\left\Vert
\rho\right\Vert _{\infty}=\rho(x_{\ast}),$ $u_{\infty}\leq\frac{\rho
}{\left\Vert \rho\right\Vert _{\infty}}$ and $u_{p_{n}}\rightarrow u_{\infty
},$ uniformly in $\overline{\Omega}.$ Moreover, it is shown that: $u_{\infty}$
is also a minimizer of $\Lambda_{\infty},$ assumes its maximum value $1$ only
at $x_{\ast}$ and satisfies%
\[
\left\{
\begin{array}
[c]{ll}%
\Delta_{\infty}u=0 & \text{in }\Omega\backslash\left\{  x_{\ast}\right\} \\
u=\frac{\rho}{\left\Vert \rho\right\Vert _{\infty}} & \text{on }%
\partial\left(  \Omega\backslash\left\{  x_{\ast}\right\}  \right)  =\left\{
x_{\ast}\right\}  \cup\partial\Omega
\end{array}
\right.
\]
in the viscosity sense.

In \cite{FL13}, Franzina and Lindqvist determined the exact asymptotic
behavior, as $j\rightarrow\infty,$ of both the minimum $\Lambda_{jp(x)}$ of
the quotients $\frac{\left\Vert \nabla u\right\Vert _{jp(x)}}{\left\Vert
u\right\Vert _{jp(x)}}$ and its respective $jp(x)$-normalized minimizer
$u_{j}.$ It is proved that
\[
\lim_{j\rightarrow\infty}\Lambda_{jp(x)}=\Lambda_{\infty}%
\]
and that a subsequence of $\left(  u_{j}\right)  _{j\in\mathbb{N}}$ converges
uniformly in $\overline{\Omega}$ to a nonnegative function $0\not \equiv
u_{\infty}\in C(\overline{\Omega})\cap W_{0}^{1,\infty}(\Omega)$ satisfying,
in the viscosity sense, the equation%
\[
\max\left\{  \Lambda_{\infty}-\dfrac{\left\vert \nabla u\right\vert }{u}%
\quad,\quad\Delta_{\infty(x)}\left(  \dfrac{u}{\left\Vert \nabla u\right\Vert
_{\infty}}\right)  \right\}  =0\quad\mathrm{in}\quad\Omega,
\]
where the operator $\Delta_{\infty(x)}$ is defined by
\[
\Delta_{\infty(x)}u:=\Delta_{\infty}u+|\nabla u|^{2}\ln|\nabla u|\langle\nabla
u,\nabla\ln p\rangle.
\]

In the present paper we assume that $p\in C^{1}(\overline{\Omega})$, $q\in
C(\overline{\Omega})$ and $1\leq q(x)<p^{\ast}(x)$ in $\overline{\Omega}.$
After presenting, in Section 2, a brief review on the theory of
Sobolev-Lebesgue spaces with variable exponents, we show in Section 3 that
\[
\Lambda_{1}:=\inf\left\{  \frac{\left\Vert \nabla v\right\Vert _{p(x)}%
}{\left\Vert v\right\Vert _{q(x)}}:v\in W_{0}^{1,p(x)}(\Omega)\setminus
\left\{  0\right\}  \right\}  =\frac{\left\Vert \nabla u\right\Vert _{p(x)}%
}{\left\Vert u\right\Vert _{q(x)}}>0
\]
for some $u\in W_{0}^{1,p(x)}(\Omega)\setminus\left\{  0\right\}  .$ Moreover,
taking \cite{FL13} and \cite{PS14} as reference, we derive the following
Euler-Lagrange equation corresponding to this minimization problem%
\begin{equation}
-\operatorname{div}\left(  \left\vert \frac{\nabla u}{K(u)}\right\vert
^{p(x)-2}\frac{\nabla u}{K(u)}\right)  =\Lambda S(u)\left\vert \frac{u}%
{k(u)}\right\vert ^{q(x)-2}\frac{u}{k(u)}, \label{iia}%
\end{equation}
where $\Lambda=\Lambda_{1},$%
\begin{equation}
K(u):=\left\Vert \nabla u\right\Vert _{p(x)},\quad k(u):=\left\Vert
u\right\Vert _{q(x)},\quad\mathrm{and}\quad S(u):=\frac{\int_{\Omega
}\left\vert \frac{\nabla u}{K(u)}\right\vert ^{p(x)}\mathrm{d}x}{\int_{\Omega
}\left\vert \frac{u}{k(u)}\right\vert ^{q(x)}\mathrm{d}x}. \label{iia1}%
\end{equation}

We consider (\ref{iia})-(\ref{iia1}) as an eigenvalue problem. Thus, if a pair
$\left(  \Lambda,u\right)  \in\mathbb{R}\times W_{0}^{1,p(x)}(\Omega
)\setminus\left\{  0\right\}  $ solves (\ref{iia})-(\ref{iia1}) we say that
$\Lambda$ is an eigenvalue and $u$ is an eigenfunction corresponding to
$\Lambda.$ In this setting, $\Lambda_{1}$ is the first eigenvalue and any of
its corresponding eigenfunctions is a first eigenfunction. We show that any
first eigenfunction do not change sign in $\Omega$ and, for the sake of
completeness, we apply a minimax scheme based on Kranoselskii genus to obtain
an increasing and unbounded sequence of eigenvalues.

Our main results are established in Sections 4 and 5. First we consider a
natural $l>N$ and show, in Section 4, that%
\[
\mu_{l}:=\inf\left\{  \frac{\left\Vert \nabla v\right\Vert _{lp(x)}%
}{\left\Vert v\right\Vert _{\infty}}:v\in W_{0}^{1,lp(x)}(\Omega
)\setminus\{0\}\right\}  =\lim_{j\rightarrow\infty}\Lambda_{l,j},
\]
where%
\[
\Lambda_{l,j}:=\inf\left\{  \frac{\left\Vert \nabla v\right\Vert _{lp(x)}%
}{\left\Vert v\right\Vert _{jq(x)}}:v\in W_{0}^{1,lp(x)}(\Omega)\setminus
\{0\}\right\}  .
\]
Moreover, by using the results of Section 3, we argue that for each fixed
$j>1$ there exists a positive minimizer $u_{l,j}\in W_{0}^{1,lp(x)}%
(\Omega)\setminus\left\{  0\right\}  $ for $\Lambda_{l,j}.$ Hence, the
compactness of the embedding $W_{0}^{1,lp(x)}(\Omega)\hookrightarrow
C(\overline{\Omega})$ implies that $\mu_{l}$ is achieved at a function $w_{l}$
which is obtained as the uniform limit of $u_{l,j_{m}}$ for a subsequence
$j_{m}\rightarrow\infty.$

We also show in Section 4, by using arguments developed in \cite{HL16}, that
$\mu_{l}$ is achieved at $u$ if, and only if,%
\[
-\operatorname{div}\left(  \left\vert \frac{\nabla u}{K_{l}(u)}\right\vert
^{lp(x)-2}\frac{\nabla u}{K_{l}(u)}\right)  =\mu_{l}\left(  \int_{\Omega
}\left\vert \frac{\nabla u}{K_{l}(u)}\right\vert ^{lp(x)}\mathrm{d}x\right)
\operatorname{sgn}(u(x_{0}))\delta_{x_{0}},
\]
where $K_{l}(u)=\Vert\nabla u\Vert_{lp(x)}$ and $x_{0}$ is the only point
where $u$ reaches its uniform norm.

Finally, in Section 5, we study the asymptotic behavior of $\mu_{l}$ and of
its normalized extremal function $w_{l}$ ($\left\Vert w_{l}\right\Vert
_{\infty}=1$ and $\mu_{l}=\left\Vert \nabla w_{l}\right\Vert _{lp(x)}$), when
$l\rightarrow\infty$. We prove that
\[
\lim_{l\rightarrow\infty}\mu_{l}=\Lambda_{\infty}%
\]
and that there exist $l_{n}\rightarrow\infty,$ $x_{\star}\in\Omega$ and
$w_{\infty}\in W_{0}^{1,\infty}(\Omega)\cap C(\overline{\Omega})$ such that
$w_{l_{n}}\rightarrow w_{\infty}$ uniformly in $\overline{\Omega}$ and
\[
0\leq w_{\infty}\leq\frac{d}{\left\Vert d\right\Vert _{\infty}}\quad
\mathrm{a.e.}\,\Omega\quad\mathrm{and}\quad w_{\infty}(x_{\star}%
)=\frac{d(x_{\star})}{\left\Vert d\right\Vert _{\infty}},
\]
where $d(x)=\operatorname{dist}(x,\partial\Omega)$ is the function distance to
the boundary. It is well-known that $d\in W_{0}^{1,\infty}(\Omega)$ and
\[
\Lambda_{\infty}=\frac{1}{\left\Vert d\right\Vert _{\infty}}.
\]
Moreover, we prove that $\Lambda_{\infty}$ is attained at $w_{\infty}$ and
that this function satisfies%
\[
\left\{
\begin{array}
[c]{ll}%
\Delta_{\infty(x)}\left(  \dfrac{u}{\left\Vert \nabla u\right\Vert _{\infty}%
}\right)  =0 & \mathrm{in}\quad D=\Omega\setminus\{x_{\star}\}\\
\dfrac{u}{\left\Vert \nabla u\right\Vert _{\infty}}=d & \mathrm{on}%
\quad\partial D=\partial\Omega\cup\left\{  x_{\star}\right\}  .
\end{array}
\right.
\]
in the viscosity sense.

Due to the lack of a suitable version of the Harnack's inequality for the
"variable infinity operator" $\Delta_{\infty(x)},$ one cannot guarantee that
the function $w_{\infty}$ is strictly positive in $\Omega.$

At the end of Section 5, by using a uniqueness result proved in \cite{Li10}
for the equation $\Delta_{\infty(x)}u=0,$ we provide a sufficient condition on
$\Omega$ for the equality%
\[
w_{\infty}=\frac{d}{\left\Vert d\right\Vert _{\infty}}\quad\mathrm{in}%
\quad\overline{\Omega}%
\]
to hold.

After comparing our results with those of \cite{FL13}, it is interesting to
remark that the minimum of the quotients $\frac{\left\Vert \nabla u\right\Vert
_{lp(x)}}{\left\Vert u\right\Vert _{jq(x)}}$ converges to $\Lambda_{\infty}$
independently of how $lp(x)$ and $jq(x)$ go to $\infty:$ if either
$l=j\rightarrow\infty$ in the case $p(x)=q(x)$ or $j\rightarrow\infty$ firstly
and then $l\rightarrow\infty.$ However, the same do not hold for the
corresponding minimizers (or for their respective limit problems). The
distinction seems to be due to the Dirac delta that appears in the right-hand
term of the Euler-Lagrange equation (\ref{iia}) when $q(x)$ is replaced by
$jq(x)$ and $j$ is taken to infinity. The same distinction appears when $p$
and $q$ are constant, as one can check from \cite{EP16} and \cite{JLM}.

\section{Preliminaries}

In this section we recall some definitions and results on the Sobolev-Lebesgue
spaces with variable exponents.

Let $\Omega$ be a bounded domain in $\mathbb{R}^{N}$ and $p\in C(\overline
{\Omega})$ such that $1<p^{-}:=\inf p(x)\leq p^{+}:=\sup p(x)<\infty$. Let
$L^{p(x)}(\Omega)$ denote the space of the Lebesgue measurable functions
$u:\Omega\rightarrow\mathbb{R}$ such that
\[
\int_{\Omega}|u|^{p(x)}\mathrm{d}x<\infty,
\]
endowed with the Luxemburg norm
\begin{equation}
\left\Vert u\right\Vert _{p(x)}=\inf\left\{  \gamma>0:\int_{\Omega}\left\vert
\frac{u(x)}{\gamma}\right\vert ^{p(x)}\frac{\mathrm{d}x}{p(x)}\leq1\right\}  .
\label{normpx}%
\end{equation}
Note that (\ref{normpx}) is equivalent to the norm
\begin{equation}
|u|_{p(x)}=\inf\left\{  \gamma>0:\int_{\Omega}\left\vert \frac{u(x)}{\gamma
}\right\vert ^{p(x)}\mathrm{d}x\leq1\right\}  \label{e}%
\end{equation}
introduced by \cite{DH11} and \cite{FZ01}. In fact, we have
\[
\frac{1}{p^{+}}\left\vert u\right\vert _{p(x)}\leq\left\Vert u\right\Vert
_{p(x)}\leq\left\vert u\right\vert _{p(x)},\quad\forall\,u\in L^{p(x)}%
(\Omega).
\]

An important concept in the theory of spaces $L^{p(x)}(\Omega)$ is the modular function.

\begin{definition}
The function $\rho:L^{p(x)}(\Omega)\rightarrow\mathbb{R}$ defined by%
\[
\rho(u)=\int_{\Omega}|u|^{p(x)}\frac{\mathrm{d}x}{p(x)},
\]
is called the modular function associated to the space $L^{p(x)}(\Omega)$.
\end{definition}

The following proposition lists some properties of the modular function .

\begin{proposition}
\label{modul} Let $u\in L^{p(x)}(\Omega)\setminus\{0\}$, then

\begin{enumerate}
\item[a)] $\Vert u\Vert_{p(x)}=a$ if, and only if, $\rho(\frac{u}{a})=1;$

\item[b)] $\Vert u\Vert_{p(x)}<1$ $(=1;>1)$ if, and only if, $\rho(u)<1$
$(=1;>1);$

\item[c)] If $\Vert u\Vert_{p(x)}>1$ then $\Vert u\Vert_{p(x)}<\Vert
u\Vert_{p(x)}^{p^{-}}\leq\rho(u)\leq\Vert u\Vert_{p(x)}^{p^{+}};$

\item[d)] If $\Vert u\Vert_{p(x)}<1$ then $\Vert u\Vert_{p(x)}^{p^{+}}\leq
\rho(u)\leq\Vert u\Vert_{p(x)}^{p^{-}}<\Vert u\Vert_{p(x)}.$
\end{enumerate}
\end{proposition}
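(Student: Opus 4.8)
The plan is to reduce all four statements to the behavior of the single real function $f(\gamma):=\rho(u/\gamma)=\int_{\Omega}|u|^{p(x)}\gamma^{-p(x)}\,\frac{\mathrm{d}x}{p(x)}$, defined for $\gamma>0$ and a fixed $u\neq 0$. First I would record its elementary properties. Since $\gamma\mapsto\gamma^{-p(x)}$ is strictly decreasing for each $x$ (because $p(x)\geq p^{-}>1>0$), the function $f$ is strictly decreasing; by dominated convergence it is continuous on $(0,\infty)$; and by monotone convergence $f(\gamma)\to 0$ as $\gamma\to\infty$, while $f(\gamma)\to +\infty$ as $\gamma\to 0^{+}$, the latter using that $|u|>0$ on a set of positive measure. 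Consequently there is a unique $\gamma_{0}>0$ with $f(\gamma_{0})=1$, and since $\{\gamma>0:f(\gamma)\leq 1\}=[\gamma_{0},\infty)$, the Luxemburg norm $\Vert u\Vert_{p(x)}$ equals this $\gamma_{0}$.

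Part a) is then immediate: $\Vert u\Vert_{p(x)}=a$ iff $a=\gamma_{0}$ iff $f(a)=\rho(u/a)=1$, by uniqueness of the root. Part b) follows by comparing the value $\rho(u)=f(1)$ with $f(\gamma_{0})=1$ through strict monotonicity: if $\Vert u\Vert_{p(x)}=\gamma_{0}<1$ then $f(1)<f(\gamma_{0})=1$, that is $\rho(u)<1$, and conversely $f(1)<1$ forces $\gamma_{0}<1$; the cases $=1$ and $>1$ are handled in exactly the same way.

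For parts c) and d) the key device is the identity $\rho(u/a)=1$ supplied by part a), where $a:=\Vert u\Vert_{p(x)}$, rewritten as
\[
\rho(u)=\int_{\Omega}|u|^{p(x)}\,\frac{\mathrm{d}x}{p(x)}=\int_{\Omega}a^{p(x)}\,\frac{|u|^{p(x)}}{a^{p(x)}}\,\frac{\mathrm{d}x}{p(x)}.
\]
When $a>1$ one has $a^{p^{-}}\leq a^{p(x)}\leq a^{p^{+}}$, so factoring these constants out of the integral and using $\rho(u/a)=1$ yields $a^{p^{-}}\leq\rho(u)\leq a^{p^{+}}$; the additional strict inequality $a<a^{p^{-}}$ holds because $a>1$ and $p^{-}>1$. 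When $a<1$ the bounds on $a^{p(x)}$ reverse to $a^{p^{+}}\leq a^{p(x)}\leq a^{p^{-}}$, giving $a^{p^{+}}\leq\rho(u)\leq a^{p^{-}}$, and now $a^{p^{-}}<a$ because $a<1$ and $p^{-}>1$.

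No single step is genuinely hard; the only points requiring care are justifying the continuity and the two one-sided limits of $f$ (standard applications of the dominated and monotone convergence theorems, for which the hypothesis $u\neq 0$ is essential at $\gamma\to 0^{+}$), and keeping track of the direction of the inequalities $a^{p^{-}}\lessgtr a^{p^{+}}$ according to whether $a$ exceeds or falls below $1$. Since these are the only subtleties, I expect the whole proposition to follow cleanly once the monotonicity–continuity–limits package for $f$ is in place.
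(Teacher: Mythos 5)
Your proof is correct. The paper itself offers no proof of this proposition --- it is stated as a standard collection of properties of the modular function, implicitly cited from the variable-exponent literature (e.g.\ Fan--Zhao) --- and your argument via the strictly decreasing, continuous function $\gamma\mapsto\rho(u/\gamma)$ with the limits $+\infty$ at $0^{+}$ and $0$ at $\infty$ is exactly the standard way to establish it; the reduction of c) and d) to the identity $\rho(u/a)=1$ by factoring $a^{p(x)}$ between $a^{p^{-}}$ and $a^{p^{+}}$ is also the usual device, and you correctly flag that the strict inequalities in c) and d) use $p^{-}>1$.
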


For a posterior use, we recall the following estimate valid for an arbitrary
$u\in L^{\infty}(\Omega)$:%

\begin{equation}
\left\Vert u\right\Vert _{p(x)}\leq\alpha\left\Vert u\right\Vert _{\infty
},\quad\mathrm{where}\quad\alpha:=\left\{
\begin{array}
[c]{ccc}%
\left\vert \Omega\right\vert ^{1/p^{+}} & \mathrm{if} & \left\vert
\Omega\right\vert \leq1,\\
\left\vert \Omega\right\vert ^{1/p^{-}} & \mathrm{if} & \left\vert
\Omega\right\vert >1.
\end{array}
\right.  \label{infi2}%
\end{equation}
This estimate is easily verified by applying item $\mathrm{b)}$ of Proposition
\ref{modul} to the function $\frac{u}{\alpha\left\Vert u\right\Vert _{\infty}%
}.$

We define the Sobolev space%
\[
W^{1,p(x)}(\Omega):=\{u\in L^{p(x)}(\Omega):\left\vert \nabla u\right\vert \in
L^{p(x)}(\Omega)\},
\]
endowed with the norm%
\[
\Vert u\Vert_{1,p(x)}:=\Vert u\Vert_{p(x)}+\Vert\nabla u\Vert_{p(x)}.
\]
Both $(L^{p(x)}(\Omega),\Vert.\Vert_{p(x)})$ and $(W^{1,p(x)}(\Omega
),\Vert.\Vert_{1,p(x)})$ are separable and uniformly convex (therefore,
reflexive) Banach spaces.

The Sobolev space $W_{0}^{1,p(x)}(\Omega)$ is defined as the closure of
$C_{0}^{\infty}(\Omega)$ in $W^{1,p(x)}(\Omega)$. In this space, $\Vert
\nabla\cdot\Vert_{p(x)}$ is a norm equivalent to norm $\Vert\cdot
\Vert_{1,p(x)}$ and this is a consequence of the following proposition.

\begin{proposition}
(see \cite{FZ01}) Let $p\in C(\overline{\Omega})$ with $p^{-}>1.$ There exists
a positive constant $C$ such that%
\[
\Vert u\Vert_{p(x)}\leq C\Vert\nabla u\Vert_{p(x)},\quad\forall\,u\in
W_{0}^{1,p(x)}(\Omega).
\]

\end{proposition}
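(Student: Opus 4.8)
The plan is to first reduce to smooth functions and then produce a pointwise control of $u$ by its gradient along line segments. Since $W_0^{1,p(x)}(\Omega)$ is by definition the closure of $C_0^\infty(\Omega)$, and both $u\mapsto\|u\|_{p(x)}$ and $u\mapsto\|\nabla u\|_{p(x)}$ are $1$-Lipschitz for the $\|\cdot\|_{1,p(x)}$-topology, it suffices to prove the estimate for $u\in C_0^\infty(\Omega)$ and pass to the limit. As $\Omega$ is bounded, we may place it in a slab $\{x=(x_1,x'):0<x_1<d\}$; extending $u$ by zero and using the fundamental theorem of calculus gives $|u(x)|\le\int_0^d|\nabla u(t,x')|\,\mathrm{d}t=:G(x')$, a function of $x'$ alone. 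By monotonicity of the Luxemburg norm ($0\le f\le g$ implies $\|f\|_{p(x)}\le\|g\|_{p(x)}$) this already yields $\|u\|_{p(x)}\le\|G\|_{p(x)}$, so everything is reduced to estimating $\|G\|_{p(x)}$ by $\|\nabla u\|_{p(x)}$.

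To pass from $G$ to $\nabla u$ I would work at the level of the modular $\rho$, using Proposition~\ref{modul} to translate between modular and norm at the start and at the end (normalizing, say, $\|\nabla u\|_{p(x)}=1$, i.e.\ $\rho(\nabla u)=1$). Applying Jensen's inequality on the segment $(0,d)$ --- legitimate since $p(x)\ge p^->1$ is a fixed number for fixed $x$ --- gives $G(x')^{p(x)}\le d^{\,p(x)-1}\int_0^d|\nabla u(t,x')|^{p(x)}\,\mathrm{d}t$, which after integration in $x$ should be compared with $\rho(\nabla u)$. Here lies the main obstacle: the exponent in $|\nabla u(t,x')|^{p(x)}$ is evaluated at $x=(x_1,x')$, whereas the gradient is sampled at $(t,x')$, and since $x_1$ and $t$ range over a slab of fixed width $d$, the uniform continuity of $p$ on $\overline{\Omega}$ does not by itself make these exponents close. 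I would resolve this by partitioning the slab into finitely many thin sub-slabs on which $p$ oscillates by less than a prescribed $\varepsilon$, splitting each integrand according to whether $|\nabla u|\le1$ or $|\nabla u|>1$ (so that the direction of $s^{a}\lessgtr s^{b}$ is controlled), and absorbing the resulting fixed powers $p^{\pm}$ with the help of the boundedness of $\Omega$. This produces a modular bound $\rho(u/C)\le 1$ with $C=C(d,p^-,p^+,|\Omega|)$, whence $\|u\|_{p(x)}\le C$ by Proposition~\ref{modul}(b), and the homogeneous inequality follows by scaling.

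If one prefers to avoid this quantitative bookkeeping, a cleaner but non-constructive route is a compactness argument. Suppose the inequality fails; then there are $u_n\in W_0^{1,p(x)}(\Omega)$ with $\|u_n\|_{p(x)}=1$ and $\|\nabla u_n\|_{p(x)}\to0$. The sequence is bounded in $W_0^{1,p(x)}(\Omega)$, which is reflexive, so along a subsequence $u_n\rightharpoonup u$; the compact embedding $W_0^{1,p(x)}(\Omega)\hookrightarrow\hookrightarrow L^{p(x)}(\Omega)$ (a companion result in \cite{FZ01}) gives $u_n\to u$ strongly in $L^{p(x)}(\Omega)$, hence $\|u\|_{p(x)}=1$. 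On the other hand $\nabla u_n\to0$ in $L^{p(x)}(\Omega)$ forces $\nabla u=0$, so $u$ is constant on the connected set $\Omega$ and, having zero boundary values, $u\equiv0$, contradicting $\|u\|_{p(x)}=1$. The merit of the first approach is that it yields an explicit constant while using only the density of $C_0^\infty(\Omega)$ and Proposition~\ref{modul}; the obstacle in it --- the exponent mismatch along segments --- is precisely the point where the continuity of $p$ must be exploited quantitatively.
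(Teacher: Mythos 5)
The paper does not prove this proposition at all: it is quoted from \cite{FZ01} as a known preliminary. Your second, ``non-constructive'' route is essentially the standard proof and is correct as written: boundedness of $(u_n)$ in the reflexive space $W_0^{1,p(x)}(\Omega)$, the compact embedding into $L^{p(x)}(\Omega)$ (which is available here as Proposition \ref{pa} with $q=p$, since $\inf_\Omega(p^*-p)\geq (p^-)^2/N>0$), weak lower semicontinuity forcing $\nabla u=0$, and connectedness of the domain plus the zero trace forcing $u\equiv 0$. That is the argument one finds in \cite{FZ01} and \cite{DH11}, so if you present that route you have a complete proof.

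Your primary, constructive route, however, has a genuine gap exactly at the point you flag, and the proposed repair does not close it. After Jensen you must compare $|\nabla u(t,x')|^{p(x_1,x')}$ with $|\nabla u(t,x')|^{p(t,x')}$, where $|x_1-t|$ can be comparable to the full slab width $d$; partitioning into thin sub-slabs does not help, because the segment of integration from $0$ to $d$ necessarily crosses all of them, so the two exponents still differ by up to $\operatorname{osc}_\Omega p$. On the set where $|\nabla u|>1$ your splitting gives $|\nabla u|^{p(x_1,x')}\leq |\nabla u|^{p(t,x')}\,|\nabla u|^{\,p^+-p^-}$, and the second factor is unbounded: there is no way to ``absorb'' $\int_{\{|\nabla u|>1\}}|\nabla u|^{p^+}\,\mathrm{d}x$ into the modular $\rho(\nabla u)$ using only $|\Omega|<\infty$, since $|\nabla u|^{p^+}\geq|\nabla u|^{p(x)}$ there and the gap in exponents is a fixed positive power. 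This is precisely the obstruction that, in the literature, forces the pointwise/segment (or maximal-function, or Riesz-potential) proofs to assume log-H\"{o}lder continuity of $p$ rather than mere continuity; under the hypotheses of the proposition the constructive argument as sketched cannot be completed, and one should fall back on the compactness argument.
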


Now, we recall some facts involving exponents $q(x)\leq p(x).$

\begin{proposition}
(see \cite{FZ01})\label{imer} Let $p,q\in C(\overline{\Omega})$. Then%
\[
L^{p(x)}(\Omega)\subset L^{q(x)}(\Omega)
\]
if, and only if, $q(x)\leq p(x)$ in $\Omega$. Additionally, the embedding is continuous.
\end{proposition}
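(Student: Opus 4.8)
The plan is to prove the two implications separately, handling the continuity of the embedding together with the sufficiency direction, and relying throughout on the modular dictionary of Proposition \ref{modul} and on the equivalence between the norms $\Vert\cdot\Vert_{p(x)}$ and $|\cdot|_{p(x)}$ recorded in (\ref{e}).

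For the sufficiency ($q(x)\le p(x)\Rightarrow$ inclusion, with continuity) I would first record the elementary pointwise inequality $t^{q(x)}\le t^{p(x)}+1$, valid for every $t\ge0$ whenever $q(x)\le p(x)$ (checking the cases $t\le1$ and $t\ge1$ separately). Taking $t=|u(x)|$ and integrating gives $\int_{\Omega}|u|^{q(x)}\mathrm{d}x\le\int_{\Omega}|u|^{p(x)}\mathrm{d}x+|\Omega|<\infty$ for every $u\in L^{p(x)}(\Omega)$, since $\Omega$ is bounded; hence $L^{p(x)}(\Omega)\subset L^{q(x)}(\Omega)$. For continuity, by homogeneity it suffices to bound $|u|_{q(x)}$ when $|u|_{p(x)}=1$, in which case $\int_{\Omega}|u|^{p(x)}\mathrm{d}x=1$ and the same pointwise inequality yields $\int_{\Omega}|u|^{q(x)}\mathrm{d}x\le1+|\Omega|$. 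Converting this modular bound into a norm bound through the estimates of Proposition \ref{modul} gives $|u|_{q(x)}\le C$ for a constant depending only on $q^{-}$ and $|\Omega|$, and rescaling produces $\Vert u\Vert_{q(x)}\le C'\Vert u\Vert_{p(x)}$ for all $u$.

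For the necessity (inclusion $\Rightarrow q(x)\le p(x)$) I would argue by contradiction. Suppose $q(x_{0})>p(x_{0})$ at some $x_{0}\in\Omega$. Continuity of $p$ and $q$ lets me pick a ball $B=B(x_{0},r)\subset\Omega$ and constants $a,b$ with $\sup_{B}p\le a<b\le\inf_{B}q$. Since $a<b$, I may fix an exponent $s$ with $N/b\le s<N/a$, and set $u(x)=|x-x_{0}|^{-s}$ on $B$ and $u\equiv0$ on $\Omega\setminus B$. Because $sa<N$, one has $\int_{B}|u|^{a}\mathrm{d}x<\infty$, and the pointwise bound above (now in the form $t^{p(x)}\le t^{a}+1$, using $p(x)\le a$ on $B$) shows $\int_{\Omega}|u|^{p(x)}\mathrm{d}x\le\int_{B}|u|^{a}\mathrm{d}x+|B|<\infty$, so $u\in L^{p(x)}(\Omega)$. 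On the other hand, near $x_{0}$ we have $|u|>1$ and $q(x)\ge b$, whence $|u|^{q(x)}\ge|u|^{b}$ there, and $sb\ge N$ forces $\int_{B}|u|^{b}\mathrm{d}x=+\infty$; thus $u\notin L^{q(x)}(\Omega)$, contradicting the assumed inclusion.

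The routine half is the sufficiency, which reduces to the pointwise inequality and the modular–norm correspondence. The main obstacle is the necessity: its content lies in using the continuity of the exponents to localize on a ball where $\sup_{B}p<\inf_{B}q$, and in calibrating the singularity exponent $s$ so that $|x-x_{0}|^{-s}$ belongs to $L^{p(x)}$ but just fails to lie in $L^{q(x)}$. The strict gap $a<b$ is exactly what makes the admissible window $[N/b,N/a)$ for $s$ nonempty, so the construction degenerates precisely when $p(x)\ge q(x)$, as it must.
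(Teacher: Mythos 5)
The paper gives no proof of this proposition --- it is simply cited from \cite{FZ01} --- so the only comparison available is with the standard argument there, which yours essentially reproduces: sufficiency and continuity from the pointwise bound $t^{q(x)}\le t^{p(x)}+1$ combined with the modular--norm correspondence, and necessity from a radial singularity $|x-x_{0}|^{-s}$ with $s$ chosen in the nonempty window $[N/b,N/a)$ created by localizing the continuous exponents. Your proof is correct; the only implicit points worth noting are that the modular-to-norm conversion in the continuity step uses $q^{-}\ge 1$ (guaranteed by the standing hypotheses) and that the divergence estimate should be read on $\{0<|x-x_{0}|<\min(r,1)\}$, where indeed $|u|>1$ so that $|u|^{q(x)}\ge |u|^{b}$.
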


From now on, the notation $f\ll g$ will mean that $f(x)\leq g(x)$ for all
$x\in\overline{\Omega}$ and%
\[
\inf\limits_{\Omega}\left(  g(x)-f(x)\right)  >0.
\]

\begin{proposition}
(\cite{FS01}, \cite{FZ01})\label{pa} Let $p,q\in C(\overline{\Omega})$ and
$1\leq q(x)\leq p^{\ast}(x)$ in $\overline{\Omega}$. The embedding
\[
W_{0}^{1,p(x)}(\Omega)\hookrightarrow L^{q(x)}(\Omega)
\]
is continuous. Moreover, it is compact whenever $q\ll p^{\star}.$
\end{proposition}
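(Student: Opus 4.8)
The plan is to obtain continuity by factoring the embedding through the critical Lebesgue space $L^{p^{\ast}(x)}(\Omega)$, and to obtain compactness by combining a classical Rellich--Kondrachov argument with a uniform integrability estimate that exploits the gap $q\ll p^{\ast}$. For the continuous embedding, the key ingredient is the critical Sobolev embedding $W_{0}^{1,p(x)}(\Omega)\hookrightarrow L^{p^{\ast}(x)}(\Omega)$, which I would invoke directly from \cite{FS01} and \cite{FZ01}. Since $q(x)\leq p^{\ast}(x)$ in $\overline{\Omega}$, Proposition \ref{imer} gives the continuous inclusion $L^{p^{\ast}(x)}(\Omega)\subset L^{q(x)}(\Omega)$, and composing the two maps yields the continuous embedding $W_{0}^{1,p(x)}(\Omega)\hookrightarrow L^{q(x)}(\Omega)$.

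For compactness under the hypothesis $q\ll p^{\ast}$, I would first reduce to a constant exponent. Because $p(x)\geq p^{-}>1$, Proposition \ref{imer} applied to $u$ and to $\nabla u$ yields the continuous inclusion $W_{0}^{1,p(x)}(\Omega)\hookrightarrow W_{0}^{1,p^{-}}(\Omega)$. Given a sequence $(u_{n})$ bounded in $W_{0}^{1,p(x)}(\Omega)$, the classical Rellich--Kondrachov theorem then provides a subsequence (not relabeled) and a function $u$ with $u_{n}\to u$ strongly in $L^{p^{-}}(\Omega)$, hence, after passing to a further subsequence, $u_{n}\to u$ almost everywhere in $\Omega$.

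The heart of the argument is to upgrade this almost everywhere convergence to convergence in $L^{q(x)}(\Omega)$, and here the gap hypothesis is essential. By the continuity just established (applied with $p^{\ast}$ in the role of $q$), the sequence $(u_{n})$ is bounded in $L^{p^{\ast}(x)}(\Omega)$, so $\int_{\Omega}|u_{n}-u|^{p^{\ast}(x)}\,\mathrm{d}x\leq C$ uniformly in $n$. Setting $r(x)=p^{\ast}(x)/q(x)$, the condition $q\ll p^{\ast}$ forces $\inf_{\Omega}r(x)>1$, so the conjugate exponent $r'(x)$ is uniformly bounded. For any measurable set $E\subset\Omega$, the generalized H\"{o}lder inequality gives $\int_{E}|u_{n}-u|^{q(x)}\,\mathrm{d}x\leq 2\,\bigl\||u_{n}-u|^{q(x)}\bigr\|_{L^{r(x)}(E)}\,\|1\|_{L^{r'(x)}(E)}$, where the first factor stays bounded (its $L^{r(x)}$-modular is exactly $\int_{E}|u_{n}-u|^{p^{\ast}(x)}\,\mathrm{d}x$, controlled by $C$) while $\|1\|_{L^{r'(x)}(E)}\to0$ as $|E|\to0$. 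This establishes the equi-absolute continuity of the integrals $\int_{E}|u_{n}-u|^{q(x)}\,\mathrm{d}x$, so Vitali's convergence theorem (valid since $|\Omega|<\infty$) yields $\int_{\Omega}|u_{n}-u|^{q(x)}\,\mathrm{d}x\to0$. By Proposition \ref{modul}, modular convergence is equivalent to norm convergence, whence $u_{n}\to u$ in $L^{q(x)}(\Omega)$ and the embedding is compact.

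I expect the main obstacle to be precisely the uniform integrability step: one must verify that the strict gap $q\ll p^{\ast}$, rather than the mere pointwise inequality $q\leq p^{\ast}$, is what guarantees $\inf_{\Omega}r(x)>1$ and hence the vanishing of $\|1\|_{L^{r'(x)}(E)}$ as $|E|\to0$. The borderline case $q\leq p^{\ast}$ without a uniform gap is exactly where this control degenerates, mirroring the loss of compactness at the critical exponent in the constant-exponent setting.
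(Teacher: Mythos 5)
The paper itself gives no proof of Proposition \ref{pa}: it is quoted verbatim from \cite{FS01} and \cite{FZ01}, so your argument can only be judged on its own terms and against those sources. Your compactness half is structurally the standard (and correct) argument: pass to $W_{0}^{1,p^{-}}(\Omega)$ via Proposition \ref{imer}, extract an a.e.\ convergent subsequence by Rellich--Kondrachov, and upgrade to $L^{q(x)}$-convergence by Vitali, with the uniform gap $q\ll p^{\ast}$ entering exactly where you say it does, namely to force $\inf_{\Omega}p^{\ast}(x)/q(x)>1$ and hence the equi-integrability of $|u_{n}-u|^{q(x)}$ through H\"{o}lder's inequality; the passage from modular convergence to norm convergence via Proposition \ref{modul} is also fine.

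The genuine gap is in the continuity half, and it leaks into the compactness half. You ``invoke directly'' the critical embedding $W_{0}^{1,p(x)}(\Omega)\hookrightarrow L^{p^{\ast}(x)}(\Omega)$, but that embedding is precisely the extreme case $q=p^{\ast}$ of the statement being proved, so nothing has been reduced; worse, for exponents that are merely continuous this critical embedding is not actually available from \cite{FS01} or \cite{FZ01}, which establish the subcritical range by covering $\overline{\Omega}$ with small balls on which $p$ is nearly constant and applying the classical constant-exponent Sobolev inequality on each --- reaching the critical exponent requires additional regularity of $p$ (log-H\"{o}lder continuity, cf.\ \cite{DH11}) or the analysis of \cite{BSS}. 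There is also the region $\{p(x)\geq N\}$, where $p^{\ast}(x)=\infty$ and ``$L^{p^{\ast}(x)}$'' is not one of the spaces defined in Section 2, so the factorization through $L^{p^{\ast}(x)}$ does not even parse there. The compactness argument, however, survives without the critical embedding: replace the global $L^{p^{\ast}(x)}$ bound on $(u_{n})$ by local constant-exponent bounds --- on each small ball $B$ of the cover, $(u_{n})$ is bounded in $L^{s_{B}}(B)$ with $s_{B}=(\inf_{B}p)^{\ast}$ (or any large finite exponent where $\inf_{B}p\geq N$), and uniform continuity of $p,q$ together with $q\ll p^{\ast}$ give $\sup_{B}q<s_{B}$ for balls of small enough radius, which is all your H\"{o}lder--Vitali step requires.
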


We define the operator $p(x)$-Laplacian by $\Delta_{p(x)}u:=\operatorname{div}%
(|\nabla u|^{p(x)-2}\nabla u)$ and consider the Dirichlet problem%

\begin{equation}
\left\{
\begin{array}
[c]{ll}%
-\Delta_{p(x)}u=f(x,u), & x\in\Omega\\
u=0 & x\in\partial\Omega
\end{array}
\right.  \label{prf}%
\end{equation}
where $f\in C(\overline{\Omega}\times\mathbb{R},\mathbb{R}).$

We say that a function $u\in W_{0}^{1,p(x)}(\Omega)$ is a weak solution of
(\ref{prf}) if, and only if,
\[
\int_{\Omega}\left\vert \nabla u\right\vert ^{p(x)-2}\nabla u\cdot\nabla
\eta\,\mathrm{d}x=%
{\displaystyle\int_{\Omega}}
f(x,u)\eta\mathrm{d}x,\quad\forall\,\eta\in W_{0}^{1,p(x)}(\Omega).
\]

\begin{proposition}
\label{sol}Weak solutions of (\ref{prf}) belong to $L^{\infty}(\Omega)$
provided that $f$ satisfies the sub-critical growth condition
\[
\left\vert f(x,t)\right\vert \leq c_{1}+c_{2}\left\vert t\right\vert
^{\alpha(x)-1},\quad\forall\,(x,t)\in\Omega\times\mathbb{R},
\]
where $\alpha\in C(\overline{\Omega})$ and $1<\alpha\ll p^{\ast}.$
\end{proposition}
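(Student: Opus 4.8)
The plan is to establish the bound by the De Giorgi--Stampacchia truncation method, adapted to the variable-exponent setting. Since the argument for $u_{-}$ is symmetric, it suffices to bound $u_{+}$. For a level $k\geq0$ I would insert the admissible test function $\eta=(u-k)_{+}\in W_{0}^{1,p(x)}(\Omega)$ into the weak formulation of (\ref{prf}). Writing $A_{k}=\{x\in\Omega:u(x)>k\}$, the gradient of $\eta$ is supported on $A_{k}$, so the left-hand side collapses to
\[
\int_{A_{k}}|\nabla u|^{p(x)}\,\mathrm{d}x=\int_{\Omega}|\nabla(u-k)_{+}|^{p(x)}\,\mathrm{d}x,
\]
which, up to the $1/p(x)$ weight, is the modular of $\nabla(u-k)_{+}$; by Proposition \ref{modul} this controls $\Vert\nabla(u-k)_{+}\Vert_{p(x)}$ from below.

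Next I would estimate the right-hand side $\int_{A_{k}}f(x,u)(u-k)\,\mathrm{d}x$ using the subcritical growth bound $|f(x,t)|\leq c_{1}+c_{2}|t|^{\alpha(x)-1}$. Hölder's inequality for variable-exponent spaces separates the factor $|u|^{\alpha(x)-1}$ from $(u-k)$, and the continuous embedding $W_{0}^{1,p(x)}(\Omega)\hookrightarrow L^{\alpha(x)}(\Omega)$ from Proposition \ref{pa} (available because $\alpha\ll p^{\ast}$ forces $\alpha\leq p^{\ast}$) converts these factors into the Sobolev norm of $(u-k)_{+}$. The decisive role of the \emph{strict} gap $\alpha\ll p^{\ast}$ is to produce a genuine exponent surplus: choosing an intermediate exponent $r$ with $\alpha\ll r\leq p^{\ast}$ and applying Hölder on the level set $A_{k}$, the growth term acquires a positive power of $|A_{k}|$ multiplied by a power of $\Vert\nabla(u-k)_{+}\Vert_{p(x)}$ that is strictly smaller than the exponent appearing on the left-hand side.

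Combining the two sides and repeatedly using the modular-versus-norm inequalities of Proposition \ref{modul} to pass between the two, I would arrive at a recursive inequality of De Giorgi type,
\[
\psi(h)\leq\frac{C}{(h-k)^{\theta}}\,\psi(k)^{1+\delta},\qquad 0\leq k<h,
\]
where $\psi(k)$ is (a norm of) $(u-k)_{+}$ or the measure $|A_{k}|$, the constant $C$ and the exponent $\theta>0$ are controlled by $c_{1},c_{2}$ and the embedding constants, and $\delta>0$ is exactly the surplus coming from $\alpha\ll p^{\ast}$. Stampacchia's iteration lemma then yields $\psi(k)=0$ for all $k\geq k_{0}$, with $k_{0}$ explicitly bounded in terms of $\Vert u\Vert_{p^{\ast}(x)}$; hence $u\leq k_{0}$ almost everywhere, and applying the same reasoning to $-u$ gives $u\in L^{\infty}(\Omega)$.

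The \textbf{main obstacle} is the variable-exponent bookkeeping needed to extract a single $\delta>0$ with uniform constants. Because $p(x)$, $\alpha(x)$ and $p^{\ast}(x)$ all vary with $x$, the conversions in Proposition \ref{modul} yield different powers ($p^{-}$ versus $p^{+}$) according to whether the relevant norm exceeds $1$, and the Hölder splittings on the shrinking level sets must be carried out with variable conjugate exponents. The continuous embedding of Proposition \ref{pa} has to be invoked with an intermediate exponent chosen uniformly in $x$, which is precisely what the uniform gap hypothesis $\alpha\ll p^{\ast}$ guarantees; keeping the surplus power strictly positive through every one of these conversions is the heart of the argument.
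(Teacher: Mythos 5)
The paper does not actually prove this proposition: it is stated in the Preliminaries as a recalled result from the variable-exponent literature (it is the standard $L^{\infty}$-bound of Fan--Zhao type, cf.\ \cite{FZ99,FZ01}), so there is no in-paper argument to compare yours against line by line. That said, your De Giorgi--Stampacchia truncation scheme is precisely the argument that underlies the cited result, and the outline is sound: testing with $(u-k)_{+}$ does collapse the left-hand side to $\int_{A_{k}}|\nabla u|^{p(x)}\,\mathrm{d}x$, the subcritical bound together with the embedding $W_{0}^{1,p(x)}(\Omega)\hookrightarrow L^{r(x)}(\Omega)$ for an intermediate $\alpha\ll r\ll p^{\ast}$ is what produces the positive power of $|A_{k}|$, and Stampacchia's iteration lemma closes the argument. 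You have also correctly located the only genuinely delicate point, namely that the modular-to-norm conversions of Proposition \ref{modul} produce different exponents ($p^{-}$ versus $p^{+}$) depending on whether the relevant quantity exceeds $1$, and that the surplus $\delta>0$ must survive all of these conversions uniformly in $x$; the uniform gap $\inf(p^{\ast}-\alpha)>0$ is exactly what makes this possible. Two caveats: first, what you have written is a plan rather than a proof --- the recursion $\psi(h)\leq C(h-k)^{-\theta}\psi(k)^{1+\delta}$ is asserted, not derived, and deriving it with explicit $\theta,\delta>0$ is the entire technical content; second, you should handle separately the region where $p(x)\geq N$, since there $p^{\ast}(x)=\infty$ and the quantity $\Vert u\Vert_{p^{\ast}(x)}$ you invoke for $k_{0}$ is not meaningful as stated (though in that regime the Morrey-type embedding makes the conclusion easier, not harder). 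Neither caveat points to a wrong step, only to work left undone.
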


\begin{proposition}
\label{solab}Suppose that $p(x)$ is H\"{o}lder continuous on $\overline
{\Omega}.$ If $u\in W_{0}^{1,p(x)}(\Omega)\cap L^{\infty}(\Omega)$ is a weak
solution of (\ref{prf}), then $u\in C^{1,\tau}(\overline{\Omega})$ for some
$\tau\in(0,1).$
\end{proposition}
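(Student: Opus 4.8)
The plan is to reduce the assertion to an interior and boundary gradient-regularity statement for the homogeneous $p(x)$-Laplacian driven by a bounded datum, and then to invoke the known $C^{1,\tau}$ theory for such equations. First I would freeze the nonlinearity: since $u\in L^{\infty}(\Omega)$ and $f\in C(\overline{\Omega}\times\mathbb{R})$, the composition $g(x):=f(x,u(x))$ is measurable and satisfies
\[
|g(x)|\leq\max\left\{  |f(x,t)|:x\in\overline{\Omega},\ |t|\leq\left\Vert u\right\Vert _{\infty}\right\}  =:M<\infty\quad\text{a.e. in }\Omega,
\]
because $f$ is continuous on the compact set $\overline{\Omega}\times[-\left\Vert u\right\Vert _{\infty},\left\Vert u\right\Vert _{\infty}]$ (the sub-critical growth bound of Proposition \ref{sol} would give the same conclusion). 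Thus $g\in L^{\infty}(\Omega)$, and $u$ is a bounded weak solution of
\[
-\Delta_{p(x)}u=g\quad\text{in }\Omega,\qquad u=0\quad\text{on }\partial\Omega,\qquad g\in L^{\infty}(\Omega).
\]

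With the datum now bounded, I would appeal to the interior gradient-regularity theory for variable-exponent elliptic equations in divergence form, as developed by Acerbi and Mingione and by Coscia and Mingione. Under the standing hypotheses $p^{-}>1$ and $p$ Hölder continuous on $\overline{\Omega}$ (which in particular forces the logarithmic-Hölder continuity required by that theory), a bounded weak solution of the displayed equation with $g\in L^{\infty}$ is locally $C^{1,\alpha}$ in $\Omega$ for some $\alpha\in(0,1)$. To pass from interior to global regularity and thereby reach $u\in C^{1,\tau}(\overline{\Omega})$, I would then invoke Fan's global $C^{1,\alpha}$ regularity result for such equations: since $\partial\Omega$ is smooth and the boundary datum $u=0$ is as regular as needed, the interior estimate is matched by a boundary gradient estimate, and the two combine to yield $u\in C^{1,\tau}(\overline{\Omega})$ for some (possibly smaller) exponent $\tau\in(0,1)$.

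The main obstacle is the interior gradient Hölder estimate itself, where the $x$-dependence of the exponent obstructs a direct use of the constant-exponent De Giorgi–Nash–Moser and Schauder machinery. The standard route is a freezing-and-comparison argument: on a small ball $B_{r}(x_{0})$ one replaces $p(x)$ by the constant $p(x_{0})$, compares $u$ with the solution $v$ of the frozen constant-exponent $p(x_{0})$-Laplacian carrying the same boundary data, and controls the discrepancy $\left\Vert \nabla u-\nabla v\right\Vert$ in terms of the modulus of continuity of $p$ over $B_{r}(x_{0})$. The Hölder (hence logarithmic-Hölder) continuity of $p$ is exactly what guarantees that $|\nabla u|^{p(x)}$ and $|\nabla u|^{p(x_{0})}$ are comparable on such balls, so that the Caccioppoli inequality and the higher-integrability of the gradient survive the freezing with constants independent of $x_{0}$. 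Iterating this comparison through a Campanato/Morrey excess-decay scheme produces a uniform Hölder modulus for $\nabla u$; the boundary version of the same scheme, using the flattening of $\partial\Omega$, then delivers the global estimate. The delicate point throughout is keeping all constants uniform in the freezing point, which is where the regularity of the exponent $p$ is genuinely used.
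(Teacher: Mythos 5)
The paper gives no proof of this proposition: it is quoted as a known result, namely Fan's global $C^{1,\alpha}$ regularity theorem \cite{XF07}, and your argument --- freezing the nonlinearity into a bounded datum $g=f(\cdot,u)\in L^{\infty}(\Omega)$ and then invoking the interior and up-to-the-boundary gradient H\"{o}lder theory for the $p(x)$-Laplacian via a freezing-and-comparison scheme --- is precisely the intended justification. The only point worth flagging is that the global estimate requires some smoothness of $\partial\Omega$ (e.g. $C^{1,\gamma}$), which you assume explicitly but the paper leaves implicit in its standing hypothesis that $\Omega$ is merely a bounded domain.
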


The following strong maximum principle for $p(x)$-Laplacian is taken from
\cite{FSS7}.

\begin{proposition}
\label{max} Suppose that $p\in C^{1}(\overline{\Omega})$, $u\in W_{0}%
^{1,p(x)}(\Omega)\setminus\left\{  0\right\}  $ and $u\geq0$ in $\Omega$. If
$-\Delta_{p(x)}u\geq0$ in $\Omega$ then $u>0$ in $\Omega$.
\end{proposition}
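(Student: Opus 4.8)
The plan is to read the hypothesis $-\Delta_{p(x)}u\geq 0$ as the statement that $u$ is a nonnegative weak supersolution of the $p(x)$-Laplacian, that is,
\[
\int_{\Omega}\left\vert \nabla u\right\vert ^{p(x)-2}\nabla u\cdot\nabla\eta\,\mathrm{d}x\geq 0,\qquad\forall\,\eta\in W_{0}^{1,p(x)}(\Omega),\ \eta\geq 0,
\]
and then to propagate the strict positivity of $u$ across all of $\Omega$ by coupling a local positivity estimate with the connectedness of $\Omega$. The estimate I would use is the weak Harnack inequality adapted to the variable exponent setting. Since $p\in C^{1}(\overline{\Omega})$ is in particular log-H\"{o}lder continuous, the weak Harnack inequality for nonnegative $p(x)$-supersolutions applies: there exist $s>0$ and $C>0$ such that, for every ball with $B_{2r}(x_{0})\subset\Omega$,
\[
\left(\frac{1}{|B_{r}(x_{0})|}\int_{B_{r}(x_{0})}u^{s}\,\mathrm{d}x\right)^{1/s}\leq C\,\operatorname{ess\,inf}_{B_{r}(x_{0})}u.
\]

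The core of the argument is then a standard open-closed dichotomy. Set $\Omega_{+}:=\{x\in\Omega:\operatorname{ess\,inf}_{B_{r}(x)}u>0\text{ for some }r>0\text{ with }B_{2r}(x)\subset\Omega\}$. This set is open by definition, and it is nonempty: since $u\geq 0$ and $u\not\equiv 0$, the integral $\int_{B_{r}}u^{s}\,\mathrm{d}x$ is strictly positive on some ball, so the weak Harnack inequality forces $\operatorname{ess\,inf}u>0$ there. Closedness in $\Omega$ follows by chaining overlapping balls: if $x\in\overline{\Omega_{+}}\cap\Omega$, then a ball centered at $x$ overlaps a ball on which $u$ has positive essential infimum, whence $\int u^{s}\,\mathrm{d}x>0$ over the ball at $x$ and the weak Harnack inequality again yields $\operatorname{ess\,inf}u>0$ near $x$. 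As $\Omega$ is connected, $\Omega_{+}=\Omega$; thus $u$ is bounded below by a positive constant on every compact subset of $\Omega$. Since a supersolution admits a lower semicontinuous representative, this upgrades to $u>0$ at every point of $\Omega$.

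The hard part is the weak Harnack inequality itself: it is the technical heart of the proof and the single place where the variable exponent genuinely intervenes. I would invoke it from the established regularity theory for the $p(x)$-Laplacian rather than reprove it, the hypothesis $p\in C^{1}(\overline{\Omega})$ being more than enough to meet the log-H\"{o}lder condition under which it is known. An alternative route, available when one additionally knows that $u$ is continuous (as in the applications of this proposition, where $u$ is even $C^{1}$ by Proposition \ref{solab}), is the classical argument of V\'{a}zquez: one shows that the zero set $\{u=0\}$ is open, whence, being also closed and $\Omega$ connected, it is either empty or all of $\Omega$, and since $u\not\equiv 0$ it must be empty. To see that $\{u=0\}$ is open, suppose $u(x_{0})=0$ with points of $\{u>0\}$ accumulating at $x_{0}$; choose a ball $B\subset\{u>0\}$ with $x_{0}\in\partial B$ and apply a Hopf boundary point lemma for $\Delta_{p(x)}$ to obtain $\partial_{\nu}u(x_{0})<0$, contradicting $\nabla u(x_{0})=0$, which is forced by $x_{0}$ being an interior minimum of the $C^{1}$ function $u$. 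In this second route the obstacle shifts to the Hopf lemma, whose proof requires constructing a radial $p(x)$-subsolution barrier in an annulus, a construction made delicate by the $x$-dependence of the exponent, which prevents the use of the clean explicit radial subsolutions available in the constant-exponent case and forces a perturbative choice.
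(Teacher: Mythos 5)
First, a point of comparison: the paper offers no proof of this proposition at all — it is quoted from \cite{FSS7} ("The following strong maximum principle for $p(x)$-Laplacian is taken from \cite{FSS7}"), so the only question is whether your sketch would actually supply the argument the authors delegate to the literature. Your main route does not. You invoke the weak Harnack inequality in the homogeneous form $\bigl(\tfrac{1}{|B_r|}\int_{B_r}u^{s}\,\mathrm{d}x\bigr)^{1/s}\leq C\,\operatorname{ess\,inf}_{B_r}u$, but for the divergence-form $p(x)$-Laplacian this is not what the regularity theory provides. Because the equation is not invariant under $u\mapsto\lambda u$ (the factor $\lambda^{p(x)-1}$ cannot be pulled out of the divergence), the Harnack-type estimates known under log-H\"{o}lder continuity carry an additive term and a constant depending on a norm of $u$, typically
\begin{equation*}
\Bigl(\frac{1}{|B_{r}|}\int_{B_{r}}u^{s}\,\mathrm{d}x\Bigr)^{1/s}\leq C\Bigl(\operatorname{ess\,inf}_{B_{r}}u+r\Bigr),
\end{equation*}
and the additive term is known to be unremovable for the (weak, divergence-form) operator $\Delta_{p(x)}$. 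In that form, positivity of $\int_{B_{r}}u^{s}\,\mathrm{d}x$ no longer forces $\operatorname{ess\,inf}_{B_{r}}u>0$; it only yields a decay estimate near a zero of $u$. Both the nonemptiness and the closedness steps of your open--closed dichotomy therefore collapse. This is a genuine gap, not a matter of locating the right citation: the inequality you need, in the form you need it, is not available.

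Your fallback route --- the V\'{a}zquez-type argument, propagating positivity by comparison with a subsolution barrier on an annulus, or equivalently a Hopf lemma at a boundary point of a ball contained in $\{u>0\}$ --- is in fact the argument underlying the result the paper cites: the strong maximum principle for $\Delta_{p(x)}$ in \cite{FSS7} rests on a comparison principle together with an explicit barrier adapted to the variable exponent. So that is the right road, and in every application made in this paper the function $u$ is continuous (indeed $C^{1,\tau}(\overline{\Omega})$ by Propositions \ref{sol} and \ref{solab}, or lies in $W_{0}^{1,lp(x)}(\Omega)\hookrightarrow C(\overline{\Omega})$), so the continuity you require is available. But as you present it, the entire technical content --- the construction of the barrier, which is precisely where $p\in C^{1}(\overline{\Omega})$ and the $x$-dependence of the exponent intervene --- is deferred, so the proposal as written does not constitute a proof by either route.
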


We recall that the inequality $-\Delta_{p(x)}u\geq0,$ for a function $u\in
W_{0}^{1,p(x)}(\Omega),$ means
\[
\int_{\Omega}\left\vert \nabla u\right\vert ^{p(x)-2}\nabla u\cdot\nabla
\eta\,\mathrm{d}x\geq0,\quad\forall\,\eta\in W_{0}^{1,p(x)}(\Omega
),\,\mathrm{with}\,\eta\geq0.
\]

Theoretical results involving operators with variable exponent can be found
among the papers \cite{AB, DH11, XF07, FSS7, XF09, FS01, FE03, FZ99, FZ01,
BSS, FL13, MR,MO10, PS14, Rad1,Rad2, St} and in the references therein. For
applications in rheology and image restoration we refer the reader to
\cite{Ac, Ant, Ru} and \cite{CL, Chen}, respectively.

\section{The minimization problem}

In this section we will consider $p\in C^{1}(\overline{\Omega})$ and $q\in
C(\overline{\Omega}),$ with $1\leq q\ll p^{\ast}.$ For practical purposes, $X$
will denote the Sobolev space $W_{0}^{1,p(x)}(\Omega)$ and $k,K:X\rightarrow
\mathbb{R}$ will denote, respectively, the functionals%
\begin{equation}
k(u):=\left\Vert u\right\Vert _{q(x)}\quad\mathrm{and}\quad K(u):=\left\Vert
\nabla u\right\Vert _{p(x)},\quad u\in X. \label{Kk}%
\end{equation}
Since $K(u)=\left\Vert u\right\Vert _{X},$ the functional $K$ is sequentially
weakly lower semicontinuous in $X.$

We will also consider
\begin{equation}
\Lambda_{1}:=\inf_{v\in X\setminus\{0\}}\frac{K(v)}{k(v)} \label{g}%
\end{equation}
which is positive number, according to Proposition \ref{pa}.

We say that a function $u\in X\setminus\left\{  0\right\}  $ is an extremal
function (or minimizer) of $\Lambda_{1}$ if
\[
\frac{K(u)}{k(u)}=\Lambda_{1}.
\]
The next proposition shows that such a function always exists.

\begin{proposition}
\label{p1} There exists a nonnegative extremal function of $\Lambda_{1}.$
\end{proposition}

\begin{proof}
Let $(v_{n})\subset X\setminus\{0\}$ be a minimizing sequence of admissible
functions such that $k(v_{n})=1.$ Thus,
\[
\Lambda_{1}=\lim_{n\rightarrow\infty}K(v_{n}).
\]
Since the sequence $(v_{n})$ is bounded in the reflexive space $X$, there
exist a subsequence $(v_{n_{j}})$ and $u\in X$ such that $v_{n_{j}%
}\rightharpoonup u$ in $X.$ We can assume, from Proposition \ref{pa}, that
$v_{n_{j}}\rightarrow u$ in $L^{q(x)}(\Omega),$ so that $k(v_{n_{j}%
})\rightarrow k(u)=1.$ Since $\left\Vert u\right\Vert _{X}\leq\liminf
\limits_{j}\left\Vert v_{n_{j}}\right\Vert _{X}$ we have
\[
K(u)\leq\lim_{j\rightarrow\infty}K(v_{n_{j}})=\Lambda_{1},
\]
showing thus that $u$ is an extremal function of $\Lambda_{1}$. It is simple
to see that (the nonnegative) function $\left\vert u\right\vert $ is also an
extremal function of $\Lambda_{1}$.
\end{proof}

Our next goal is to derive the Euler-Lagrange equation associated with the
minimizing problem (\ref{g}), which must be satisfied for the extremal
functions of $\Lambda_{1}$. For this we need the following lemma.

\begin{lemma}
(Lemma A.1,\cite{FL13})\label{lemfl} Let $u\in X$ and $\eta\in C_{0}^{\infty
}(\Omega).$ Then,
\[
\left.  \frac{d}{d\varepsilon}K(u+\varepsilon\eta)\right\vert _{\varepsilon
=0}=\frac{\int_{\Omega}\left\vert \frac{\nabla u}{K(u)}\right\vert
^{p(x)-2}\frac{\nabla u}{K(u)}\cdot\nabla\eta\mathrm{d}x}{\int_{\Omega
}\left\vert \frac{\nabla u}{K(u)}\right\vert ^{p(x)}\mathrm{d}x}\,
\]
and
\[
\left.  \frac{d}{d\varepsilon}k(u+\varepsilon\eta)\right\vert _{\varepsilon
=0}=\frac{\int_{\Omega}\left\vert \frac{u}{k(u)}\right\vert ^{q(x)-2}\frac
{u}{k(u)}\eta\mathrm{d}x}{\int_{\Omega}\left\vert \frac{u}{k(u)}\right\vert
^{q(x)}\mathrm{d}x}%
\]

\end{lemma}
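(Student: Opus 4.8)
The plan is to compute each derivative directly from the Luxemburg norm characterization in Proposition \ref{modul}(a), which tells us that $K(u+\varepsilon\eta)$ is defined implicitly by the modular equation $\rho\!\left(\frac{\nabla(u+\varepsilon\eta)}{K(u+\varepsilon\eta)}\right)=1$, that is,
\[
\int_{\Omega}\left\vert\frac{\nabla u+\varepsilon\nabla\eta}{K(u+\varepsilon\eta)}\right\vert^{p(x)}\frac{\mathrm{d}x}{p(x)}=1\qquad\text{for all small }\varepsilon.
\]
First I would set $\Phi(\varepsilon):=K(u+\varepsilon\eta)$ and treat this as an identity in $\varepsilon$; the main analytic step is to justify differentiating under the integral sign and to establish that $\varepsilon\mapsto\Phi(\varepsilon)$ is differentiable at $\varepsilon=0$ (so that the implicit relation can be differentiated). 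I expect this to be the principal obstacle, since $p(x)$ is variable and one must dominate the $\varepsilon$-derivative of the integrand uniformly for small $\varepsilon$; this is handled by the uniform bounds $1<p^{-}\le p(x)\le p^{+}<\infty$ together with the fact that $\eta\in C_0^\infty(\Omega)$ has compact support, which give an integrable majorant via a standard mean-value estimate.

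Next, I would differentiate the modular identity implicitly at $\varepsilon=0$. Writing $t=\frac{\nabla(u+\varepsilon\eta)}{\Phi(\varepsilon)}$, the chain rule applied to $|t|^{p(x)}/p(x)$ produces $|t|^{p(x)-2}t$ dotted with $\frac{\mathrm{d}}{\mathrm{d}\varepsilon}t$, and the $1/p(x)$ factor cancels the $p(x)$ coming from the power. Carrying out $\frac{\mathrm{d}}{\mathrm{d}\varepsilon}t=\frac{\nabla\eta}{\Phi}-\frac{\nabla(u+\varepsilon\eta)}{\Phi^{2}}\Phi'$ and evaluating at $\varepsilon=0$ (where $\Phi(0)=K(u)$), the identity $\frac{\mathrm{d}}{\mathrm{d}\varepsilon}\big|_0\, 1=0$ becomes
\[
\int_{\Omega}\left\vert\frac{\nabla u}{K(u)}\right\vert^{p(x)-2}\frac{\nabla u}{K(u)}\cdot\left(\frac{\nabla\eta}{K(u)}-\frac{\nabla u}{K(u)^{2}}\Phi'(0)\right)\mathrm{d}x=0.
\]
Solving this linear equation for $\Phi'(0)=\left.\frac{\mathrm{d}}{\mathrm{d}\varepsilon}K(u+\varepsilon\eta)\right\vert_{\varepsilon=0}$ and recognizing that $\int_\Omega\left\vert\frac{\nabla u}{K(u)}\right\vert^{p(x)-2}\frac{\nabla u}{K(u)}\cdot\frac{\nabla u}{K(u)}\,\mathrm{d}x=\int_\Omega\left\vert\frac{\nabla u}{K(u)}\right\vert^{p(x)}\mathrm{d}x$ yields exactly the claimed formula for the derivative of $K$.

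The derivative of $k$ is entirely analogous: from Proposition \ref{modul}(a) applied to the $q(x)$-modular, $\Psi(\varepsilon):=k(u+\varepsilon\eta)$ satisfies $\int_\Omega\big|\frac{u+\varepsilon\eta}{\Psi(\varepsilon)}\big|^{q(x)}\frac{\mathrm{d}x}{q(x)}=1$, and the same implicit differentiation (now with the scalar integrand $|s|^{q(x)}/q(x)$ whose derivative is $|s|^{q(x)-2}s$) gives the stated expression. I would remark that the regularity hypotheses $p,q\in C(\overline\Omega)$ with finite positive infimum and supremum are exactly what make the modular function well-behaved and the dominated-convergence argument valid, and that since $\eta$ has compact support in $\Omega$ all boundary terms are absent. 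The only genuinely delicate point, as noted, is the differentiability of $\Phi$ and $\Psi$ and the interchange of differentiation and integration; once that is secured, the remainder is the algebraic manipulation above. Since the statement is quoted verbatim as Lemma A.1 of \cite{FL13}, I would either cite that reference for the technical justification of differentiability or reproduce the short domination argument inline for completeness.
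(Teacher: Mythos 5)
Your proposal is correct and follows essentially the same route as the source the paper relies on: the paper gives no proof of this lemma but quotes it from \cite{FL13} (Lemma A.1), whose argument is precisely your implicit differentiation of the modular identity $\int_{\Omega}\left\vert \frac{\nabla u+\varepsilon\nabla\eta}{\Phi(\varepsilon)}\right\vert ^{p(x)}\frac{\mathrm{d}x}{p(x)}=1$, with the $\frac{1}{p(x)}$ weight in the Luxemburg norm cancelling the exponent exactly as you describe. The one point worth making fully precise is that the differentiability of $\Phi$ (and $\Psi$) is most cleanly obtained from the implicit function theorem, using that the $\gamma$-derivative of $\gamma\mapsto\int_{\Omega}\left\vert \frac{\nabla u+\varepsilon\nabla\eta}{\gamma}\right\vert ^{p(x)}\frac{\mathrm{d}x}{p(x)}$ equals $-\frac{1}{\gamma}\int_{\Omega}\left\vert \frac{\nabla u+\varepsilon\nabla\eta}{\gamma}\right\vert ^{p(x)}\mathrm{d}x<0$; once that is in place your computation yields the stated formulas.
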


We observe that a necessary condition for the inequality
\[
\frac{K(u)}{k(u)}\leq\frac{K(u+\varepsilon\eta)}{k(u+\varepsilon\eta)}%
\]
to hold is that
\[
\left.  \frac{d}{d\varepsilon}\frac{K(u+\varepsilon\eta)}{k(u+\varepsilon
\eta)}\right\vert _{\varepsilon=0}=0,
\]
which can be written as
\[
\frac{1}{K(u)}\left.  \frac{d}{d\varepsilon}K(u+\varepsilon\eta)\right\vert
_{\varepsilon=0}=\frac{1}{k(u)}\left.  \frac{d}{d\varepsilon}k(u+\varepsilon
\eta)\right\vert _{\varepsilon=0}.
\]
Therefore, according to Lemma \ref{lemfl}, if $u$ is an extremal function,
then one must have%
\[
\int_{\Omega}\left\vert \frac{\nabla u}{K(u)}\right\vert ^{p(x)-2}\frac{\nabla
u}{K(u)}\cdot\nabla\eta\mathrm{d}x=\Lambda_{1}S(u)\int_{\Omega}\left\vert
\frac{u}{k(u)}\right\vert ^{q(x)-2}\frac{u}{k(u)}\eta\mathrm{d}x,\quad
\forall\,\eta\in C_{0}^{\infty}(\Omega)
\]
where
\begin{equation}
S(u):=\frac{\int_{\Omega}\left\vert \frac{\nabla u}{K(u)}\right\vert
^{p(x)}\mathrm{d}x}{\int_{\Omega}\left\vert \frac{u}{k(u)}\right\vert
^{q(x)}\mathrm{d}x}. \label{Sv}%
\end{equation}

Hence, since $X$ is the closure of $C_{0}^{\infty}(\Omega)$ in the norm
$\left\Vert \cdot\right\Vert _{X}$, the Euler-Lagrange equation for the
extremal functions is
\begin{equation}
-\operatorname{div}\left(  \left\vert \frac{\nabla u}{K(u)}\right\vert
^{p(x)-2}\frac{\nabla u}{K(u)}\right)  =\Lambda_{1}S(u)\left\vert \frac
{u}{k(u)}\right\vert ^{q(x)-2}\frac{u}{k(u)}. \label{h}%
\end{equation}

\begin{definition}
We say that a real number $\Lambda$ is an \textbf{eigenvalue} if there exists
$u\in X\setminus\left\{  0\right\}  $ such that
\begin{equation}
\int_{\Omega}\left\vert \frac{\nabla u}{K(u)}\right\vert ^{p(x)-2}\frac{\nabla
u}{K(u)}\cdot\nabla\eta\,\mathrm{d}x=\Lambda S(u)\int_{\Omega}\left\vert
\frac{u}{k(u)}\right\vert ^{q(x)-2}\frac{u}{k(u)}\eta\,\mathrm{d}%
x,\quad\forall\,\eta\in X. \label{i}%
\end{equation}
In this case, we say that $u$ is an \textbf{eigenfunction} corresponding to
$\Lambda.$
\end{definition}

\begin{remark}
\label{homo}One can easily verify the following homogeneity property: if $u$
is an eigenfunction corresponding to $\Lambda$ the same holds for $tu,$ for
any $t\in\mathbb{R}\setminus\left\{  0\right\}  .$
\end{remark}

Taking $\eta=u$ in (\ref{i}) and recalling the definition of $S(u)$ in
(\ref{Sv}) we obtain%

\[
K(u)\int_{\Omega}\left\vert \frac{\nabla u}{K(u)}\right\vert ^{p(x)}%
\,\mathrm{d}x=\Lambda S(u)k(u)\int_{\Omega}\left\vert \frac{u}{k(u)}%
\right\vert ^{q(x)}\,\mathrm{d}x=\Lambda k(u)\int_{\Omega}\left\vert
\frac{\nabla u}{K(u)}\right\vert ^{p(x)}\mathrm{d}x,
\]
so that
\[
\Lambda=\frac{K(u)}{k(u)}\geq\Lambda_{1}.
\]
Hence, $\Lambda_{1}$ is called the \textbf{first eigenvalue} and the
corresponding eigenfunctions are called \textbf{first eigenfunctions}.
Clearly, the extremal functions are precisely the first eigenfunctions.

\begin{proposition}
\label{p2} There exists a continuous, strictly positive first eigenfunction.
\end{proposition}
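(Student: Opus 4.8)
The plan is to take the nonnegative extremal function supplied by Proposition \ref{p1} and to upgrade both its regularity and its sign by recognizing that, after a suitable normalization, the Euler--Lagrange equation (\ref{h}) is nothing but a standard $p(x)$-Laplacian problem, to which Propositions \ref{sol}, \ref{solab} and \ref{max} apply directly. Concretely, I would first fix a nonnegative extremal $u_{0}\ge 0$, $u_{0}\not\equiv 0$, and exploit the homogeneity recorded in Remark \ref{homo} to rescale it. Setting
\[
u:=\frac{u_{0}}{K(u_{0})},
\]
which is legitimate since $K(u_{0})=\left\Vert \nabla u_{0}\right\Vert _{p(x)}>0$, produces another first eigenfunction, now normalized so that $K(u)=1$.

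The purpose of this normalization is that the $x$-dependent weight $K(u)^{1-p(x)}$ hidden inside the operator on the left of (\ref{h}) collapses to the constant $1$. Indeed,
\[
\left\vert \frac{\nabla u}{K(u)}\right\vert ^{p(x)-2}\frac{\nabla u}{K(u)}=K(u)^{1-p(x)}\left\vert \nabla u\right\vert ^{p(x)-2}\nabla u=\left\vert \nabla u\right\vert ^{p(x)-2}\nabla u,
\]
so the weak formulation (\ref{i}) for $u$ becomes
\[
\int_{\Omega}\left\vert \nabla u\right\vert ^{p(x)-2}\nabla u\cdot\nabla\eta\,\mathrm{d}x=\int_{\Omega}f(x,u)\eta\,\mathrm{d}x,\quad\forall\,\eta\in X,
\]
with $f(x,t):=\Lambda_{1}S(u)\,k(u)^{1-q(x)}\left\vert t\right\vert ^{q(x)-2}t$. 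That is, $u$ is a genuine weak solution of the Dirichlet problem (\ref{prf}). Since $k(u)$ and $S(u)$ are fixed positive constants and $q\in C(\overline{\Omega})$, one has $\left\vert f(x,t)\right\vert \le c\,\left\vert t\right\vert ^{q(x)-1}$, and the hypothesis $1\le q\ll p^{\ast}$ makes this an admissible subcritical growth with exponent $\alpha(x)=q(x)$, the bounded contribution near points where $q(x)$ is close to $1$ being absorbed into the constant $c_{1}$. Proposition \ref{sol} then yields $u\in L^{\infty}(\Omega)$, and since $p\in C^{1}(\overline{\Omega})$ is in particular H\"older continuous, Proposition \ref{solab} upgrades this to $u\in C^{1,\tau}(\overline{\Omega})$ for some $\tau\in(0,1)$; in particular $u$ is continuous on $\overline{\Omega}$.

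It remains to produce strict positivity. Because $u\ge 0$ and $\Lambda_{1},S(u),k(u)>0$, the right-hand side satisfies $f(x,u)=\Lambda_{1}S(u)k(u)^{1-q(x)}u^{q(x)-1}\ge 0$, whence $-\Delta_{p(x)}u\ge 0$ in $\Omega$ in the weak sense. As $u\not\equiv 0$, the strong maximum principle of Proposition \ref{max} gives $u>0$ throughout $\Omega$. This exhibits $u$ as a continuous, strictly positive first eigenfunction, completing the proof.

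The main obstacle is precisely the first step. The differential operator appearing in the Euler--Lagrange equation (\ref{h}) is not literally the $p(x)$-Laplacian: it carries the factor $K(u)^{1-p(x)}$, which is genuinely nonconstant exactly because $p$ is nonconstant, so Propositions \ref{sol}, \ref{solab} and \ref{max} cannot be invoked for an arbitrary extremal. Using the scale invariance of the eigenvalue problem to normalize $K(u)=1$ is what eliminates this weight and casts the equation into the standard form covered by the cited results; once that reduction is made, the $L^{\infty}$ bound, the $C^{1,\tau}$ regularity and the maximum principle follow in a routine cascade.
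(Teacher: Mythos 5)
Your proof is correct and follows essentially the same route as the paper's: existence of a nonnegative extremal from Proposition \ref{p1}, continuity via Propositions \ref{sol} and \ref{solab}, and strict positivity via the strong maximum principle of Proposition \ref{max}. The normalization $K(u)=1$ that you use to reduce (\ref{h}) to the literal form (\ref{prf}) is left implicit in the paper, so your version simply makes that reduction explicit.
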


\begin{proof}
Proposition \ref{p1} shows that a nonnegative first eigenfunction $u\in X$
exists, Propositions \ref{sol} and \ref{solab} guarantee that $u\in
C(\overline{\Omega})$ and the strong maximum principle (Proposition \ref{max})
yields that $u>0$ in $\Omega.$
\end{proof}

\begin{remark}
\label{alt} It can be verified that if the norm (\ref{e}) is taken to define%
\[
\widetilde{\Lambda}_{1}:=\inf_{v\in X\setminus\{0\}}\frac{\left\vert \nabla
v\right\vert _{p(x)}}{\left\vert v\right\vert _{q(x)}}%
\]
then
\[
\frac{1}{p^{+}}\widetilde{\Lambda}_{1}\leq\Lambda_{1}\leq q^{+}\widetilde
{\Lambda}_{1}.
\]
Moreover, the same results of Propositions \ref{p1} and \ref{p2} can be
obtained, but associated with an Euler -Lagrange equation a bit more
complicated:
\begin{equation}
\int_{\Omega}p(x)\left\vert \frac{\nabla u}{K_{\star}}\right\vert
^{p(x)-2}\frac{\nabla u}{K_{\star}}.\nabla\eta\,\mathrm{d}x=\frac{K_{\star}%
}{k_{\star}}S_{u}\int_{\Omega}q(x)\left\vert \frac{u}{k_{\star}}\right\vert
^{q(x)-2}\frac{u}{k_{\star}}\eta\,\mathrm{d}x \label{equal}%
\end{equation}
where,%
\[
K_{\star}=\left\vert \nabla u\right\vert _{p(x)},\quad k_{\star}=\left\vert
u\right\vert _{q(x)},\quad S_{u}=\frac{\int_{\Omega}p(x)\left\vert
\frac{\nabla u}{K_{\star}}\right\vert ^{p(x)}\mathrm{d}x}{\int_{\Omega
}q(x)\left\vert \frac{u}{k_{\star}}\right\vert ^{q(x)}\mathrm{d}x}.
\]

Noting that $\frac{p^{-}}{q^{+}}\leq S_{u}\leq\frac{p^{+}}{q^{-}}$ we can show
the existence of a strictly positive eigenfunction. In fact, if $u$ is a
nonnegative function of (\ref{equal}) then, for any $\eta\in W_{0}%
^{1,p(x)}(\Omega)$, with $\eta\geq0$, we have%
\begin{align*}
p^{+}\int_{\Omega}\left\vert \frac{\nabla u}{K_{\star}}\right\vert
^{p(x)-2}\frac{\nabla u}{K_{\star}}\cdot\nabla\eta\,\mathrm{d}x  &  \geq
q^{-}\frac{K_{\star}}{k_{\star}}S_{u}\int_{\Omega}\left\vert \frac{u}%
{k_{\star}}\right\vert ^{q(x)-2}\frac{u}{k_{\star}}\eta\,\mathrm{d}x\\
&  \geq q^{-}\left(  \frac{p^{-}}{q^{+}}\right)  \frac{K_{\star}}{k_{\star}%
}\int_{\Omega}\left\vert \frac{u}{k_{\star}}\right\vert ^{q(x)-2}\frac
{u}{k_{\star}}\eta\,\mathrm{d}x\\
&  \geq\left(  \frac{p^{-}}{q^{+}}\right)  \frac{q^{-}}{q^{+}}\frac
{K(u)}{k(u)}\int_{\Omega}\left\vert \frac{u}{k_{\star}}\right\vert
^{q(x)-2}\frac{u}{k_{\star}}\eta\,\mathrm{d}x.
\end{align*}
It follows that $-\Delta_{p(x)}\left(  \frac{u}{K_{\star}}\right)  \geq0$ what
implies, by Proposition \ref{max}, that $u>0$ in $\Omega$. Thus, we can see
that the use of $p(x)^{-1}\mathrm{d}x$ simplifies the equations a little.
\end{remark}

According to Lemma \ref{lemfl}, the Gateaux derivatives $K^{\prime},k^{\prime
}$ are given, respectively, by%
\[
\left\langle K^{\prime}(u),v\right\rangle =\frac{\int_{\Omega}\left\vert
\frac{\nabla u}{K(u)}\right\vert ^{p(x)-2}\frac{\nabla u}{K(u)}\cdot\nabla
v\,\mathrm{d}x}{\int_{\Omega}\left\vert \frac{\nabla u}{K(u)}\right\vert
^{p(x)}\mathrm{d}x},\quad u,v\in X
\]
and
\[
\left\langle k^{\prime}(u),v\right\rangle =\frac{\int_{\Omega}\left\vert
\frac{u}{k(u)}\right\vert ^{q(x)-2}\frac{u}{k(u)}v\,\mathrm{d}x}{\int_{\Omega
}\left\vert \frac{u}{k(u)}\right\vert ^{q(x)}\mathrm{d}x},\quad u,v\in X.
\]

It is simple to check that $K,k\in C^{1}(X,\mathbb{R})$ (see \cite{XF09,FE03}%
). Thus, we define%
\[
\mathcal{M}:=\{u\in X:k(u)=1\}=k^{-1}(1).
\]
Since $1$ is a regular value of $k$, the set $\mathcal{M}$ is a submanifold of
class $C^{1}$ in $X$. The functional
\[
\widetilde{K}:=K\mid_{\mathcal{M}}:\mathcal{M}\rightarrow\mathbb{R}%
\]
is of class $C^{1}$ and bounded from below in $\mathcal{M}.$

We know that $u$ is a critical point of $\widetilde{K}$ in $\mathcal{M}$ if
there exists $\Lambda\in\mathbb{R}$ such that%
\[
K^{\prime}(u)=\Lambda k^{\prime}(u)\quad\text{\textrm{in}}\quad X^{\ast},
\]
meaning that
\[
\left\langle K^{\prime}(u),v\right\rangle =\Lambda\left\langle k^{\prime
}(u),v\right\rangle ,\quad\forall\,v\in X.
\]
Therefore, if $u$ is critical point of $\widetilde{K}$ then $u$ is solution of
(\ref{i}) with $\Lambda=K(u)/k(u)$.

Now, by adapting arguments of \cite[Lemma 2.3]{PS14}, we show that
$\widetilde{K}$ satisfies the Palais-Smale condition.

\begin{proposition}
\label{p4} $\widetilde{K}$ satisfies the $(PS)_{c}$ condition for all
$c\in\mathbb{R}$, namely, every sequence $(u_{n})\subset\mathcal{M}$ such that
$\widetilde{K}(u_{n})\rightarrow c$ and $\widetilde{K}^{\prime}(u_{n}%
)\rightarrow0$, has a convergent subsequence.
\end{proposition}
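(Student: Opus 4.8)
The plan is to run the standard argument for a Palais--Smale condition on a $C^{1}$ constraint manifold: produce a weak limit from the boundedness of the sequence, and then upgrade weak to strong convergence by means of an $(S_{+})$-type property of the $p(x)$-Laplacian. Let $(u_{n})\subset\mathcal{M}$ satisfy $\widetilde{K}(u_{n})\rightarrow c$ and $\widetilde{K}^{\prime}(u_{n})\rightarrow0$. Since $\widetilde{K}(u_{n})=K(u_{n})=\left\Vert u_{n}\right\Vert _{X}$, the sequence is bounded in the reflexive space $X$; passing to a subsequence, $u_{n}\rightharpoonup u$ in $X$, and by the compactness in Proposition \ref{pa} (recall $q\ll p^{\ast}$), $u_{n}\rightarrow u$ in $L^{q(x)}(\Omega)$. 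Hence $k(u)=\lim k(u_{n})=1$, so $u\in\mathcal{M}$, and $c=\lim K(u_{n})\geq\Lambda_{1}>0$, which guarantees that $K(u_{n})$ stays between two positive constants for $n$ large.

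Next I would extract Lagrange multipliers. Since $\widetilde{K}^{\prime}(u_{n})\rightarrow0$ in $T_{u_{n}}^{\ast}\mathcal{M}$, there exist reals $\Lambda_{n}$ with $\left\Vert K^{\prime}(u_{n})-\Lambda_{n}k^{\prime}(u_{n})\right\Vert _{X^{\ast}}\rightarrow0$. A direct computation from Lemma \ref{lemfl} gives the identities $\left\langle K^{\prime}(u_{n}),u_{n}\right\rangle =K(u_{n})$ and $\left\langle k^{\prime}(u_{n}),u_{n}\right\rangle =k(u_{n})=1$; pairing the above relation with $u_{n}$ and using the boundedness of $(u_{n})$ then yields $\Lambda_{n}=K(u_{n})+o(1)\rightarrow c$, so $(\Lambda_{n})$ is bounded.

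I would then test $K^{\prime}(u_{n})-\Lambda_{n}k^{\prime}(u_{n})\rightarrow0$ against $u_{n}-u$. The term $\left\langle k^{\prime}(u_{n}),u_{n}-u\right\rangle$ tends to $0$: its denominator $\int_{\Omega}|u_{n}|^{q(x)}\mathrm{d}x$ is bounded below (by Proposition \ref{modul}, as $k(u_{n})=1$ forces the modular to equal $1$), while its numerator is controlled, via the H\"{o}lder inequality in variable-exponent spaces, by $\left\Vert u_{n}-u\right\Vert _{q(x)}\rightarrow0$. With $(\Lambda_{n})$ bounded and $(u_{n}-u)$ bounded in $X$, I obtain $\left\langle K^{\prime}(u_{n}),u_{n}-u\right\rangle \rightarrow0$. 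Its denominator $\int_{\Omega}|\nabla u_{n}/K(u_{n})|^{p(x)}\mathrm{d}x$ lies in $[p^{-},p^{+}]$ (again Proposition \ref{modul}), so the numerator itself vanishes:
\[
\int_{\Omega}\left\vert \frac{\nabla u_{n}}{K(u_{n})}\right\vert ^{p(x)-2}\frac{\nabla u_{n}}{K(u_{n})}\cdot\nabla(u_{n}-u)\,\mathrm{d}x\rightarrow0.
\]

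The heart of the proof is the final upgrade to strong convergence. Writing $K_{n}:=K(u_{n})$, the last display equals $\int_{\Omega}K_{n}^{-(p(x)-1)}|\nabla u_{n}|^{p(x)-2}\nabla u_{n}\cdot\nabla(u_{n}-u)\,\mathrm{d}x$. Because $K_{n}\rightarrow c>0$, the weight $K_{n}^{-(p(x)-1)}$ converges uniformly to $c^{-(p(x)-1)}$ and remains between two positive constants; replacing $\nabla u_{n}$ by $\nabla u$ in the leading factor produces a term that vanishes (strong $L^{p^{\prime}(x)}$ convergence of the associated vector field tested against the weakly null $\nabla(u_{n}-u)$). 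Subtracting, the nonnegative integrand
\[
K_{n}^{-(p(x)-1)}\left(|\nabla u_{n}|^{p(x)-2}\nabla u_{n}-|\nabla u|^{p(x)-2}\nabla u\right)\cdot(\nabla u_{n}-\nabla u)
\]
has integral tending to $0$, and the uniform lower bound on the weight gives $\limsup_{n}\left\langle J^{\prime}(u_{n}),u_{n}-u\right\rangle \leq0$, where $J^{\prime}(w):=\int_{\Omega}|\nabla w|^{p(x)-2}\nabla w\cdot\nabla(\cdot)\,\mathrm{d}x$ is the $p(x)$-Laplacian operator. Invoking the $(S_{+})$ property of $J^{\prime}$ established in \cite{FZ99,FZ01} then yields $u_{n}\rightarrow u$ strongly in $X$, as required. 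I expect this last step --- handling the $x$-dependent normalizing weight $K_{n}^{-(p(x)-1)}$ and reducing the normalized quotient operator to the genuine $p(x)$-Laplacian so that the known $(S_{+})$ property applies --- to be the main technical obstacle, everything else being the routine bounded-sequence and compact-embedding argument.
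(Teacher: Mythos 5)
Your argument is correct, but the decisive step differs from the paper's. Both proofs share the routine part (boundedness, weak limit $u\in\mathcal{M}$ via the compact embedding, boundedness of the multipliers $\Lambda_{n}\rightarrow c$, and the reduction to $\left\langle K^{\prime}(u_{n}),u_{n}-u\right\rangle \rightarrow0$). From there the paper does \emph{not} pass to the $p(x)$-Laplacian: it first proves, via Young's inequality with exponent $r=p(x)$ and the modular normalization, the duality-type estimate $\left\vert \left\langle K^{\prime}(w),v\right\rangle \right\vert \leq K(v)$ (inequality (\ref{dosi})), which yields $\left\langle K^{\prime}(u_{n}),u_{n}-u\right\rangle =K(u_{n})-\left\langle K^{\prime}(u_{n}),u\right\rangle \geq K(u_{n})-K(u)$ and hence $\limsup K(u_{n})\leq K(u)$; strong convergence then follows from weak convergence plus convergence of the norms $K(u_{n})=\Vert u_{n}\Vert_{X}$ in the uniformly convex space $X$ (the Radon--Riesz argument, following \cite{PS14}). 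You instead strip off the normalizing weight $K_{n}^{-(p(x)-1)}$ --- correctly noting it is uniformly bounded above and below because $K_{n}\rightarrow c\geq\Lambda_{1}>0$ and $p$ is bounded --- and reduce to $\limsup\left\langle J^{\prime}(u_{n}),u_{n}-u\right\rangle \leq0$ for the genuine $p(x)$-Laplacian, then invoke its $(S_{+})$ property. That is a valid alternative; its cost is that it imports the $(S_{+})$ property as a black box (which in this paper's bibliography is established in \cite{FE03} rather than \cite{FZ99,FZ01}, and whose proof needs quantitative monotonicity of $\xi\mapsto\vert\xi\vert^{p-2}\xi$, not just the nonnegativity you display), whereas the paper's route is self-contained, needing only elementary convexity (Young's inequality) and the uniform convexity of $X$. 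Your handling of the $k^{\prime}$ term via H\"{o}lder and the lower bound on the modular is also fine and is simply a different packaging of the same estimate the paper obtains from (\ref{dosi}).
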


\begin{proof}
First, we show that if $u\in X\setminus\{0\}$, then%
\begin{equation}
\left\vert \left\langle K^{\prime}(u),v\right\rangle \right\vert \leq
K(v)\quad\text{\textrm{and}}\quad\left\vert \left\langle k^{\prime
}(u),v\right\rangle \right\vert \leq k(v),\quad\forall\,v\in X. \label{dosi}%
\end{equation}
We assume that $v\not \equiv 0$ (otherwise the equality in (\ref{dosi}) holds
trivially). Then%
\begin{equation}
\left\vert \left\langle k^{\prime}(u),v\right\rangle \right\vert \leq
\frac{\int_{\Omega}\left\vert \frac{u}{k(u)}\right\vert ^{q(x)-1}%
|v|\,\mathrm{d}x}{\int_{\Omega}\left\vert \frac{u}{k(u)}\right\vert
^{q(x)}\mathrm{d}x} \label{aux13}%
\end{equation}
and, by using the Young inequality%
\[
ab\leq\left(  1-\frac{1}{r}\right)  a^{r/r-1}+\frac{1}{r}b^{r},\quad
\forall\,a,b\geq0,\,r>1,
\]
with $a=\left\vert u/k(u)\right\vert ^{q(x)-1}$, $b=\left\vert
v/k(v)\right\vert $, $r=q(x),$ and integrating over $\Omega$, we have%
\begin{equation}
\int_{\Omega}\left\vert \frac{u}{k(u)}\right\vert ^{q(x)-1}\left\vert \frac
{v}{k(v)}\right\vert \,\mathrm{d}x\leq\int_{\Omega}\left\vert \frac{u}%
{k(u)}\right\vert ^{q(x)}\mathrm{d}x-\int_{\Omega}\left\vert \frac{u}%
{k(u)}\right\vert ^{q(x)}\frac{\mathrm{d}x}{q(x)}+\int_{\Omega}\left\vert
\frac{v}{k(v)}\right\vert ^{q(x)}\frac{\mathrm{d}x}{q(x)}. \label{aux12}%
\end{equation}

Since%
\[
\left\Vert u\right\Vert _{q(x)}=k(u)\quad\text{\textrm{and}}\quad\left\Vert
v\right\Vert _{q(x)}=k(v)
\]
it follows from Proposition \ref{modul} (item $a)$) that%
\[
\int_{\Omega}\left\vert \frac{u}{k(u)}\right\vert ^{q(x)}\frac{\mathrm{d}%
x}{q(x)}=1=\int_{\Omega}\left\vert \frac{v}{k(v)}\right\vert ^{q(x)}%
\frac{\mathrm{d}x}{q(x)}.
\]
This implies that (\ref{aux12}) can be rewritten as%
\[
\int_{\Omega}\left\vert \frac{u}{k(u)}\right\vert ^{q(x)-1}\left\vert
v\right\vert \,\mathrm{d}x\leq k(v)\int_{\Omega}\left\vert \frac{u}%
{k(u)}\right\vert ^{q(x)}\mathrm{d}x,
\]
which, in view of (\ref{aux13}), leads to the second inequality in (\ref{dosi}).

The first inequality in (\ref{dosi}) is obtained by using the same arguments.

Now, let $c\in\mathbb{R}$ and take a sequence $(u_{n})\subset\mathcal{M}$ such
that $\widetilde{K}(u_{n})\rightarrow c$ and $\widetilde{K}^{\prime}%
(u_{n})\rightarrow0.$ It follows that $K(u_{n})\rightarrow c$ in $X$ and
\begin{equation}
K^{\prime}(u_{n})-c_{n}k^{\prime}(u_{n})\rightarrow0 \label{aux3}%
\end{equation}
in $X^{\ast},$ for some sequence $\left(  c_{n}\right)  \subset\mathbb{R}.$
Since
\[
\left\langle K^{\prime}(u_{n})-c_{n}k^{\prime}(u_{n}),u_{n}\right\rangle
=\left\langle K^{\prime}(u_{n}),u_{n}\right\rangle -c_{n}\left\langle
k^{\prime}(u_{n}),u_{n}\right\rangle =K(u_{n})-c_{n}%
\]
and%
\begin{align*}
\left\Vert \left\langle K^{\prime}(u_{n})-c_{n}k^{\prime}(u_{n}),u_{n}%
\right\rangle \right\Vert _{X}  &  \leq\left\Vert K^{\prime}(u_{n}%
)-c_{n}k^{\prime}(u_{n})\right\Vert _{X^{\ast}}\left\Vert u_{n}\right\Vert
_{X}\\
&  =\left\Vert K^{\prime}(u_{n})-c_{n}k^{\prime}(u_{n})\right\Vert _{X^{\ast}%
}K(u_{n})\rightarrow0,
\end{align*}
one has $c_{n}\rightarrow c.$

Taking into account that $K(u_{n})=\Vert u_{n}\Vert_{X}$ is bounded and that
$X$ is reflexive and compactly embedded into $L^{q(x)}(\Omega)$, we can select
a subsequence $(u_{n_{j}})$ converging weakly in $X$ and strongly in
$L^{q(x)}(\Omega)$ to a function $u\in X.$ The weak convergence guarantees
that $K(u)\leq\liminf K(u_{n_{j}}).$ Thus, since $X$ is uniformly convex, in
order to conclude that $u_{n_{j}}$ converges to $u$ strongly, it is enough to
verify that
\[
\limsup_{j\rightarrow\infty}K(u_{n_{j}})\leq K(u).
\]

It follows from (\ref{dosi}) that
\[
\left\vert \left\langle k^{\prime}(u_{n_{j}}),u_{n_{j}}-u\right\rangle
\right\vert \leq k(u_{n_{j}}-u)=\left\Vert u_{n_{j}}-u\right\Vert
_{q(x)}\rightarrow0.
\]
Combining this fact, (\ref{aux3}) and the boundedness of both sequences
$\left(  c_{n_{j}}\right)  $ and $\left(  \left\Vert u_{n_{j}}-u\right\Vert
_{X}\right)  $ we conclude that $\left\langle K^{\prime}(u_{n_{j}}),u_{n_{j}%
}-u\right\rangle \rightarrow0.$ Since%
\[
\left\langle K^{\prime}(u_{n_{j}}),u_{n_{j}}-u\right\rangle =K(u_{n_{j}%
})-\left\langle K^{\prime}(u_{n_{j}}),u\right\rangle \geq K(u_{n_{j}})-K(u)
\]
we have%
\[
\limsup_{j\rightarrow\infty}K(u_{n_{j}})\leq\limsup_{j\rightarrow\infty
}\left\langle K^{\prime}(u_{n_{j}}),u_{n_{j}}-u\right\rangle +K(u)=K(u),
\]
what finishes the proof.
\end{proof}

Since $\mathcal{M}$ is a closed symmetric submanifold of class $C^{1}$ in $X$
and $\widetilde{K}\in C^{1}(\mathcal{M},\mathbb{R})$ is even, bounded from
below and satisfies the $(PS)_{c}$ condition, we can define an increasing and
unbounded sequence of eigenvalues, by a minimax scheme. For this, we set%
\[
\Sigma:=\{A\subset X\setminus\{0\}:A\ \mathrm{is\ compact}\,\text{\textrm{and}%
}\,A=-A\}
\]
and
\[
\Sigma_{n}:=\{A\in\Sigma:A\subset\mathcal{M}\quad\text{\textrm{and}}%
\quad\gamma(A)\geq n\},\quad n=1,2,...,
\]
where $\gamma$ is the Krasnoselskii genus.

Let us define%
\[
\lambda_{n}:=\inf_{A\in\Sigma_{n}}\sup_{u\in A}\widetilde{K}(u),\quad n\geq1.
\]
It is known that under the above conditions for $\mathcal{M}$ and
$\widetilde{K}$, we have that $\lambda_{n}$ is a critical value of
$\widetilde{K}$ in $\mathcal{M}$ (see \cite{SL88}, Corollary 4.1). Moreover,
since $\Sigma_{k+1}\subset\Sigma_{k}$, we have $\lambda_{k+1}\geq\lambda_{k},$
and so%
\[
0<\lambda_{1}\leq\lambda_{2}\leq...\leq\lambda_{n}\leq\lambda_{n+1}%
...\rightarrow\infty.
\]
In particular $\lambda_{1}=\inf_{v\in\mathcal{M}}K(v)=\Lambda_{1}$ (this
latter equality is consequence of Remark \ref{homo}).

Let us consider the sets%
\[
Q_{n}=\{u\in\mathcal{M}:\widetilde{K}^{\prime}(u)=0\quad\text{\textrm{and}%
}\quad\widetilde{K}(u)=\lambda_{n}\}
\]
and
\[
A_{p(x),q(x)}=\{\lambda\in\mathbb{R}:\lambda\mathrm{\ is\ an\ eigenvalue}\}.
\]
If $u\in Q_{n},$ for a given $n\geq1,$ there exists $\Lambda\in\mathbb{R}$
such that $K^{\prime}(u)=\Lambda k^{\prime}(u)$ in $X^{\ast}$, i.e.,
$\left\langle K^{\prime}(u),v\right\rangle =\Lambda\left\langle k^{\prime
}(u),v\right\rangle $ for any $v\in X$. Thus, $\Lambda=\frac{K(u)}%
{k(u)}=\lambda_{n}$, so that $(\lambda_{n})_{n\geq1}\subset A_{p(x),q(x)}.$
Since $\lambda_{n}\rightarrow\infty$, we conclude the following.

\begin{proposition}
\label{p5} The set of eigenvalues $A_{p(x),q(x)}$ is non-empty, infinite and
$\sup A_{p(x),q(x)}=+\infty$.
\end{proposition}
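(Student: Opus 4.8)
The plan is to leverage the minimax structure already established immediately before the statement, so that Proposition \ref{p5} becomes a short consequence of the properties of the Krasnoselskii genus. The key point is that the sequence $(\lambda_n)_{n\geq1}$ defined by
\[
\lambda_{n}=\inf_{A\in\Sigma_{n}}\sup_{u\in A}\widetilde{K}(u)
\]
consists of critical values of $\widetilde{K}$ on $\mathcal{M}$, and that every such critical value is an eigenvalue. I would first recall why each $\lambda_n$ is indeed attained as a critical value: this is exactly the content of the cited result \cite{SL88} (Corollary 4.1), which applies because $\mathcal{M}$ is a closed symmetric $C^1$-submanifold of $X$ and $\widetilde{K}$ is even, bounded from below, and satisfies the $(PS)_c$ condition for every $c$ (Proposition \ref{p4}). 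Thus for each $n$ the set $Q_n$ is non-empty, and picking $u\in Q_n$ produces, via $K'(u)=\lambda_n k'(u)$ together with $\Lambda=K(u)/k(u)=\lambda_n$, an eigenvalue equal to $\lambda_n$. This gives the inclusion $(\lambda_n)_{n\geq1}\subset A_{p(x),q(x)}$.

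Next I would argue non-emptiness and infiniteness. Non-emptiness is immediate since $\lambda_1=\Lambda_1$ is already an eigenvalue by the discussion following Proposition \ref{p2}. For infiniteness I must show that the sequence $(\lambda_n)$ contains infinitely many distinct values; it is not enough merely to have infinitely many indices, since a priori the $\lambda_n$ could repeat. The standard resolution is that a genus-based sequence diverging to $+\infty$ cannot stabilize at a finite value: if $\lambda_n=\lambda_{n+1}=\cdots$ were eventually constant, the set of critical points at that level would have infinite genus, contradicting the fact that $\widetilde{K}$ is bounded on bounded sets together with the $(PS)$ compactness (which forces critical sets at a fixed finite level to be compact with finite genus). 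Since the excerpt has already asserted $\lambda_n\to\infty$, the cleanest route is simply to invoke this monotone divergence: because $0<\lambda_1\leq\lambda_2\leq\cdots\to\infty$, the set $\{\lambda_n:n\geq1\}\subset A_{p(x),q(x)}$ must be infinite, and consequently $\sup A_{p(x),q(x)}=+\infty$.

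The main obstacle I anticipate is justifying $\lambda_n\to\infty$ rigorously rather than treating it as given, since this is what ultimately forces both infiniteness and the unbounded supremum. The standard argument uses the compact embedding $X\hookrightarrow L^{q(x)}(\Omega)$ from Proposition \ref{pa}: one shows that on any symmetric compact set $A\subset\mathcal{M}$ of genus at least $n$, the quantity $\sup_{u\in A}K(u)$ must grow with $n$. Concretely, one fixes a sequence of finite-dimensional approximations and uses the fact that the genus of a symmetric set exceeding the dimension forces the set to meet the complement of any fixed finite-dimensional subspace, where $\widetilde{K}=K$ is large because the $L^{q(x)}$-norm is controlled by a weaker norm there. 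Making this precise with variable exponents requires the estimates of Proposition \ref{modul} to pass between the modular and the norm, but the structure is identical to the constant-exponent case and no genuinely new difficulty arises. Once $\lambda_n\to\infty$ is in hand, the conclusion $\sup A_{p(x),q(x)}=+\infty$ follows at once from $(\lambda_n)\subset A_{p(x),q(x)}$.
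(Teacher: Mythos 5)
Your proposal follows the paper's own route exactly: the paper derives Proposition \ref{p5} directly from the preceding minimax discussion, showing $(\lambda_n)_{n\geq 1}\subset A_{p(x),q(x)}$ via the critical values of $\widetilde{K}$ on $\mathcal{M}$ (citing \cite{SL88}, Corollary 4.1) and invoking the monotone divergence $\lambda_n\to\infty$ to conclude. Your added care about the distinctness of the $\lambda_n$ and your sketch of why $\lambda_n\to\infty$ (via the compact embedding and genus versus finite-dimensional subspaces) are sound and slightly more explicit than the paper, which treats these points as part of the standard Ljusternik--Schnirelmann theory, but the argument is essentially identical.
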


\begin{remark}
When $p$ and $q$ are constants, the equation (\ref{h}) reduces to%
\[
-\operatorname{div}\left(  \frac{\left\vert \nabla u\right\vert ^{p-2}\nabla
u}{\left\Vert \nabla u\right\Vert _{p}^{p-1}}\right)  =\Lambda\left(
\frac{\sqrt[p]{p}}{\sqrt[q]{q}}\right)  \frac{\left\vert u\right\vert ^{q-2}%
u}{\left\Vert u\right\Vert _{q}^{q-1}}%
\]
and the first eigenvalue is given by%
\[
\Lambda_{1}=\frac{\sqrt[q]{q}}{\sqrt[p]{p}}\inf_{W_{0}^{1,p}(\Omega
)\setminus\{0\}}\frac{\left\Vert \nabla v\right\Vert _{p}}{\left\Vert
v\right\Vert _{q}}.
\]

\end{remark}

\section{Extremal functions for $\frac{\left\Vert \cdot\right\Vert _{lp(x)}%
}{\left\Vert \cdot\right\Vert _{\infty}}$}

We recall the Morrey inequality, valid for $p>N:$%
\[
\Vert u\Vert_{C^{0,\gamma}(\overline{\Omega})}\leq C\Vert\nabla u\Vert
_{L^{p}(\Omega)},\quad\forall\,u\in W_{0}^{1,p}(\Omega),
\]
where $\gamma:=1-\frac{N}{p}$ and the positive constant $C$ depends only on
$N$, $\Omega$ and $p$. An important consequence of this inequality is the
compactness of the embedding
\[
W_{0}^{1,p}(\Omega)\hookrightarrow C(\overline{\Omega}).
\]

Combining this fact with Proposition \ref{imer}, we can verify the compactness
of the embeddings
\[
W_{0}^{1,lp(x)}(\Omega)\hookrightarrow C(\overline{\Omega})\quad
\mathrm{and}\quad W_{0}^{1,lp(x)}(\Omega)\hookrightarrow L^{jq(x)}(\Omega),
\]
where here, and throughout this section:

\begin{itemize}
\item $l,j\in\mathbb{N},$ with $l\geq N;$

\item $p\in C^{1}(\overline{\Omega}),$ with $1<p^{-}\leq p^{+}<\infty$ (so
that $lp(x)\geq lp^{-}>N$);

\item $q\in C(\overline{\Omega})$ with $1<q^{-}\leq q^{+}<\infty.$
\end{itemize}

The following lemma is proved in \cite{FL13}.

\begin{lemma}
\label{infi} If $u\in L^{\infty}(\Omega)$, then%
\[
\lim_{j\rightarrow\infty}\left\Vert u\right\Vert _{jq(x)}=\left\Vert
u\right\Vert _{\infty}.
\]

\end{lemma}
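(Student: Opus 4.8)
The plan is to establish the two inequalities $\limsup_{j\to\infty}\|u\|_{jq(x)}\le\|u\|_{\infty}$ and $\liminf_{j\to\infty}\|u\|_{jq(x)}\ge\|u\|_{\infty}$ separately. Throughout I would write $M:=\|u\|_{\infty}$ and $a_{j}:=\|u\|_{jq(x)}$, and assume $M>0$, since $M=0$ forces $u=0$ a.e. and the claim is trivial. The entire difficulty introduced by the variable exponent is handled by the uniform sandwich $jq^{-}\le jq(x)\le jq^{+}$, valid on all of $\overline{\Omega}$, which lets me replace the nonconstant exponent and the modular weight $\tfrac{1}{jq(x)}$ by constant bounds depending only on $q^{-},q^{+}$ and $j$.

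For the upper estimate I would apply inequality (\ref{infi2}) with the exponent $p(x)$ replaced by $jq(x)$. Since $(jq)^{-}=jq^{-}$ and $(jq)^{+}=jq^{+}$, that estimate gives $a_{j}\le\alpha_{j}M$, where $\alpha_{j}=|\Omega|^{1/(jq^{+})}$ if $|\Omega|\le1$ and $\alpha_{j}=|\Omega|^{1/(jq^{-})}$ if $|\Omega|>1$. In either case the exponent tends to $0$, so $\alpha_{j}\to1$ as $j\to\infty$, and therefore $\limsup_{j\to\infty}a_{j}\le M$.

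For the lower estimate I would argue by contradiction, using the modular characterization in item a) of Proposition \ref{modul}: by definition of $a_{j}$ one has $\int_{\Omega}|u/a_{j}|^{jq(x)}\,\tfrac{dx}{jq(x)}=1$. Fix $\varepsilon\in(0,M/2)$ and set $A_{\varepsilon}:=\{x\in\Omega:|u(x)|>M-\varepsilon\}$, which has positive measure $\delta>0$ by the definition of the essential supremum. Suppose, toward a contradiction, that $a_{j}\le M-2\varepsilon$. Then on $A_{\varepsilon}$ one has $|u(x)/a_{j}|\ge\beta:=\tfrac{M-\varepsilon}{M-2\varepsilon}>1$, so $|u/a_{j}|^{jq(x)}\ge\beta^{jq^{-}}$ there, and using $\tfrac{1}{jq(x)}\ge\tfrac{1}{jq^{+}}$ I would obtain
\[
1=\int_{\Omega}\Bigl|\frac{u}{a_{j}}\Bigr|^{jq(x)}\frac{dx}{jq(x)}\ge\frac{\beta^{jq^{-}}}{jq^{+}}\,|A_{\varepsilon}|=\frac{\delta\,\beta^{jq^{-}}}{jq^{+}}.
\]
Since $\beta>1$, the right-hand side tends to $+\infty$ as $j\to\infty$, so $a_{j}\le M-2\varepsilon$ can hold for only finitely many $j$. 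Hence $\liminf_{j\to\infty}a_{j}\ge M-2\varepsilon$, and letting $\varepsilon\to0$ yields $\liminf_{j\to\infty}a_{j}\ge M$. Combined with the upper estimate, this proves $\lim_{j\to\infty}\|u\|_{jq(x)}=\|u\|_{\infty}$.

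The only genuine obstacle is the lower bound, specifically ensuring that the variable weight $\tfrac{1}{jq(x)}$ does not spoil the exponential blow-up; this is precisely why I would retain the factor $\tfrac{1}{jq^{+}}$ rather than discard it, and rely on $\beta^{jq^{-}}/(jq^{+})\to\infty$, i.e.\ exponential growth dominating the linear factor. Everything else reduces to the familiar constant-exponent computation once the sandwich $jq^{-}\le jq(x)\le jq^{+}$ is in place.
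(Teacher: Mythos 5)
Your proof is correct and complete: the upper bound follows from (\ref{infi2}) applied with exponent $jq(x)$, and the lower bound from the modular identity of Proposition \ref{modul}\,a) together with the comparison $\delta\beta^{jq^{-}}/(jq^{+})\rightarrow\infty$, with all the small technical points (positivity of $a_{j}$, the range $\varepsilon\in(0,M/2)$, the sandwich $jq^{-}\leq jq(x)\leq jq^{+}$) handled properly. The paper itself gives no proof of this lemma, deferring to \cite{FL13}; your argument is the standard one used there, so it matches the intended approach.
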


The previous lemma is also valid if we consider an increasing sequence of
functions $(q_{j})\subset C(\overline{\Omega})$ such that $q_{j}%
\rightarrow\infty$ uniformly.

Let us define%
\[
\Lambda_{l,j}:=\inf\left\{  \frac{\left\Vert \nabla v\right\Vert _{lp(x)}%
}{\left\Vert v\right\Vert _{jq(x)}}:v\in W_{0}^{1,lp(x)}(\Omega)\setminus
\{0\}\right\}
\]
and%
\begin{equation}
\mu_{l}:=\inf\left\{  \frac{\left\Vert \nabla v\right\Vert _{lp(x)}%
}{\left\Vert v\right\Vert _{\infty}}:v\in W_{0}^{1,lp(x)}(\Omega
)\setminus\{0\}\right\}  .\label{mmul}%
\end{equation}

\begin{proposition}
\label{iguan}One has,%
\begin{equation}
\lim_{j\rightarrow\infty}\Lambda_{l,j}=\mu_{l}. \label{qinf}%
\end{equation}

\end{proposition}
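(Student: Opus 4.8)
The plan is to establish the two one-sided estimates $\limsup_{j\to\infty}\Lambda_{l,j}\le\mu_l$ and $\liminf_{j\to\infty}\Lambda_{l,j}\ge\mu_l$; together they force the limit to exist and to equal $\mu_l$. Both bounds are obtained by comparing the norm $\|v\|_{jq(x)}$ with $\|v\|_\infty$, the first through a pointwise-in-$v$ limit and the second through a $v$-uniform estimate.

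For the upper bound I would fix an arbitrary $v\in W_0^{1,lp(x)}(\Omega)\setminus\{0\}$. Since $lp^->N$, the compact embedding $W_0^{1,lp(x)}(\Omega)\hookrightarrow C(\overline{\Omega})$ ensures $v\in L^\infty(\Omega)$, so Lemma \ref{infi} applies and yields $\|v\|_{jq(x)}\to\|v\|_\infty$ as $j\to\infty$. Consequently, for this fixed $v$,
\[
\lim_{j\to\infty}\frac{\left\Vert \nabla v\right\Vert _{lp(x)}}{\left\Vert v\right\Vert _{jq(x)}}=\frac{\left\Vert \nabla v\right\Vert _{lp(x)}}{\left\Vert v\right\Vert _{\infty}}.
\]
Because $\Lambda_{l,j}$ is by definition a lower bound for the quotient on the left at every $j$, passing to the upper limit gives $\limsup_{j\to\infty}\Lambda_{l,j}\le \|\nabla v\|_{lp(x)}/\|v\|_\infty$, and taking the infimum over all admissible $v$ produces $\limsup_{j\to\infty}\Lambda_{l,j}\le\mu_l$.

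For the lower bound the key point is that the elementary estimate (\ref{infi2}), applied with the exponent $jq(x)$ in place of $p(x)$, furnishes a uniform comparison $\|v\|_{jq(x)}\le\alpha_j\|v\|_\infty$ valid for \emph{every} $v$, where $\alpha_j=|\Omega|^{1/(jq^+)}$ if $|\Omega|\le1$ and $\alpha_j=|\Omega|^{1/(jq^-)}$ if $|\Omega|>1$. Hence, for each $v$,
\[
\frac{\left\Vert \nabla v\right\Vert _{lp(x)}}{\left\Vert v\right\Vert _{jq(x)}}\ge\frac{1}{\alpha_j}\frac{\left\Vert \nabla v\right\Vert _{lp(x)}}{\left\Vert v\right\Vert _{\infty}}\ge\frac{\mu_l}{\alpha_j},
\]
so taking the infimum over $v$ gives $\Lambda_{l,j}\ge\mu_l/\alpha_j$. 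Since the exponents $1/(jq^{\pm})$ tend to $0$, one has $\alpha_j\to|\Omega|^{0}=1$, and letting $j\to\infty$ yields $\liminf_{j\to\infty}\Lambda_{l,j}\ge\mu_l$. Combining the two estimates closes the argument. The only genuinely delicate point is securing the $v$-\emph{independent} upper bound on $\|v\|_{jq(x)}$ with a constant $\alpha_j\to1$: this is exactly what (\ref{infi2}) provides, and it is what prevents the lower bound from degrading as $j$ grows. The upper bound, by contrast, is an immediate pointwise-in-$v$ consequence of Lemma \ref{infi}, and notably neither half of the argument requires the existence of the minimizers $u_{l,j}$.
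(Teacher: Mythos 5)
Your proof is correct and follows essentially the same route as the paper: the upper bound via Lemma \ref{infi} applied to each fixed $v$, and the lower bound via the uniform estimate (\ref{infi2}) with the exponent $jq(x)$. The only (minor, and arguably cleaner) difference is that in the lower bound the paper applies (\ref{infi2}) to the extremal function $u_{l,j}$ of $\Lambda_{l,j}$, whereas you apply it to an arbitrary $v$ and then take the infimum, which, as you note, dispenses with the existence of the minimizers $u_{l,j}$ altogether.
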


\begin{proof}
It follows from Lemma \ref{infi} that%
\[
\limsup_{j\rightarrow\infty}\Lambda_{l,j}\leq\lim_{j\rightarrow\infty}%
\frac{\left\Vert \nabla v\right\Vert _{lp(x)}}{\left\Vert v\right\Vert
_{jq(x)}}=\frac{\left\Vert \nabla v\right\Vert _{lp(x)}}{\left\Vert
v\right\Vert _{\infty}},\quad\forall\,v\in W_{0}^{1,lp(x)}(\Omega
)\setminus\{0\}.
\]
Therefore,
\[
\limsup_{j\rightarrow\infty}\Lambda_{l,j}\leq\mu_{l}.
\]

For any $j\geq1$, let $u_{l,j}$ denote the extremal of $\Lambda_{l,j}$, that
is,%
\[
\Lambda_{l,j}=\frac{\left\Vert \nabla u_{l,j}\right\Vert _{lp(x)}}{\left\Vert
u_{l,j}\right\Vert _{jq(x)}}.
\]

It follows from (\ref{infi2}) that
\begin{equation}
\left\Vert u_{l,j}\right\Vert _{jq(x)}\leq|\Omega|^{\alpha_{j}}\left\Vert
u_{l,j}\right\Vert _{\infty},\label{pr53}%
\end{equation}
where
\[
\alpha_{j}:=\left\{
\begin{array}
[c]{lll}%
\frac{1}{jq^{+}} & \mathrm{if} & \left\vert \Omega\right\vert \leq1,\\
\frac{1}{jq^{-}} & \mathrm{if} & \left\vert \Omega\right\vert >1.
\end{array}
\right.
\]

Hence,
\[
\frac{\mu_{l}}{\left\vert \Omega\right\vert ^{\alpha_{j}}}\leq\frac{\left\Vert
\nabla u_{l,j}\right\Vert _{lp(x)}}{\left\vert \Omega\right\vert ^{\alpha_{j}%
}\left\Vert u_{l,j}\right\Vert _{\infty}}\leq\frac{\left\Vert \nabla
u_{l,j}\right\Vert _{lp(x)}}{\left\Vert u_{l,j}\right\Vert _{jq(x)}}%
=\Lambda_{l,j}%
\]
and by making $j\rightarrow\infty$ we obtain%
\[
\mu_{l}\leq\liminf_{j\rightarrow\infty}\Lambda_{l,j},
\]
concluding thus the proof of (\ref{qinf}).
\end{proof}

We say that $u\in W_{0}^{1,lp(x)}(\Omega)$ is an extremal function of $\mu
_{l}$ if%
\[
\mu_{l}=\frac{\left\Vert \nabla u\right\Vert _{lp(x)}}{\left\Vert u\right\Vert
_{\infty}}.
\]

\begin{proposition}
Let $l\geq N$ be fixed. There exists $j_{m}\rightarrow\infty$ and a function
$w_{l}\in W_{0}^{1,lp(x)}(\Omega)\cap C(\overline{\Omega})$ such that
$u_{l,j_{m}}\rightarrow w_{l}$ strongly in $C(\overline{\Omega})$ and also in
$W_{0}^{1,lp(x)}(\Omega)$. Moreover, $w_{l}$ is an extremal function of
$\mu_{l}$.
\end{proposition}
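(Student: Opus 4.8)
The plan is to study the sequence of extremals $u_{l,j}$ of $\Lambda_{l,j}$, whose existence for each fixed $j$ is guaranteed by the results of Section 3: the required exponent condition $1\le jq\ll(lp)^{\ast}$ holds automatically here because $lp(x)\ge lp^{-}>N$ forces $(lp)^{\ast}\equiv\infty$, so Proposition \ref{p1} applies with gradient exponent $lp(x)$ and function exponent $jq(x)$. Since the quotient defining $\Lambda_{l,j}$ is $0$-homogeneous, I would normalize each extremal so that $\|u_{l,j}\|_{\infty}=1$; then $\|\nabla u_{l,j}\|_{lp(x)}=\Lambda_{l,j}\|u_{l,j}\|_{jq(x)}$. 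The first task is a uniform bound on $(u_{l,j})$ in $W_{0}^{1,lp(x)}(\Omega)$: combining the estimate (\ref{pr53}) with the normalization gives $\|u_{l,j}\|_{jq(x)}\le|\Omega|^{\alpha_{j}}$, and since $\Lambda_{l,j}\to\mu_{l}$ stays bounded while $|\Omega|^{\alpha_{j}}\to1$, the gradient norms $K_{l}(u_{l,j}):=\|\nabla u_{l,j}\|_{lp(x)}$ remain bounded.

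Next I would extract the convergent subsequence. Because $lp(x)>N$, the embedding $W_{0}^{1,lp(x)}(\Omega)\hookrightarrow C(\overline{\Omega})$ is compact by Morrey's inequality, so a subsequence $u_{l,j_{m}}$ (with $j_{m}\to\infty$, being a subsequence of $\mathbb{N}$) converges uniformly to some $w_{l}\in C(\overline{\Omega})$; by reflexivity I may assume along the same subsequence that $u_{l,j_{m}}\rightharpoonup w_{l}$ weakly in $W_{0}^{1,lp(x)}(\Omega)$. Uniform convergence then yields $\|w_{l}\|_{\infty}=\lim_{m}\|u_{l,j_{m}}\|_{\infty}=1$, so in particular $w_{l}\neq0$.

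The extremality of $w_{l}$ follows from a lower-semicontinuity argument, and this is the step demanding the most care, since both the function $u_{l,j_{m}}$ and the exponent $j_{m}q(x)$ vary simultaneously. The key estimate I would isolate is $\limsup_{m}\|u_{l,j_{m}}\|_{j_{m}q(x)}\le1$, which comes directly from $\|u_{l,j_{m}}\|_{j_{m}q(x)}\le|\Omega|^{\alpha_{j_{m}}}\|u_{l,j_{m}}\|_{\infty}=|\Omega|^{\alpha_{j_{m}}}\to1$. Using the weak lower semicontinuity of $K_{l}$ together with $\Lambda_{l,j_{m}}\to\mu_{l}>0$,
\[
K_{l}(w_{l})\le\liminf_{m}K_{l}(u_{l,j_{m}})\le\limsup_{m}\Lambda_{l,j_{m}}\|u_{l,j_{m}}\|_{j_{m}q(x)}=\mu_{l}\limsup_{m}\|u_{l,j_{m}}\|_{j_{m}q(x)}\le\mu_{l}.
\]
On the other hand, since $\|w_{l}\|_{\infty}=1$ and $w_{l}\neq0$, the definition (\ref{mmul}) of $\mu_{l}$ gives $\mu_{l}\le\|\nabla w_{l}\|_{lp(x)}=K_{l}(w_{l})$. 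Hence $K_{l}(w_{l})=\mu_{l}$, so $w_{l}$ is an extremal function of $\mu_{l}$.

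Finally, the strong convergence in $W_{0}^{1,lp(x)}(\Omega)$ comes essentially for free from the displayed chain: it shows $\mu_{l}\le\liminf_{m}K_{l}(u_{l,j_{m}})\le\limsup_{m}K_{l}(u_{l,j_{m}})\le\mu_{l}$, hence $K_{l}(u_{l,j_{m}})\to\mu_{l}=K_{l}(w_{l})$, i.e. the norms converge. Combined with the weak convergence $u_{l,j_{m}}\rightharpoonup w_{l}$ and the uniform convexity of $W_{0}^{1,lp(x)}(\Omega)$, this forces $u_{l,j_{m}}\to w_{l}$ strongly in $W_{0}^{1,lp(x)}(\Omega)$, which completes the argument.
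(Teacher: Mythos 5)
Your proof is correct and follows essentially the same route as the paper's: extract a subsequence via the compact embedding $W_{0}^{1,lp(x)}(\Omega)\hookrightarrow C(\overline{\Omega})$, use the estimate (\ref{pr53}) together with $\Lambda_{l,j}\to\mu_{l}$ and weak lower semicontinuity of $K_{l}$ to squeeze $K_{l}(w_{l})=\mu_{l}$, and deduce strong convergence from norm convergence plus uniform convexity. The only (cosmetic) difference is that you normalize $\Vert u_{l,j}\Vert_{\infty}=1$ while the paper normalizes $\Vert u_{l,j}\Vert_{jq(x)}=1$; both choices feed into the same chain of inequalities.
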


\begin{proof}
Let $u_{l,j}$ denote the extremal function of $\Lambda_{l,j}.$ Without loss of
generality we assume that $\left\Vert u_{l,j}\right\Vert _{jq(x)}=1$. Since
the sequence $(u_{l,j})_{j\geq1}$ is uniformly bounded in $W_{0}%
^{1,lp(x)}(\Omega)$, there exist $j_{m}\rightarrow\infty$ and $w_{l}\in
W_{0}^{1,lp(x)}(\Omega)\subset C(\overline{\Omega})$ such that $u_{l,j_{m}}$
converges to $w_{l},$ weakly in $W_{0}^{1,lp(x)}(\Omega)$ and strongly in
$C(\overline{\Omega})$. It follows from (\ref{pr53}) that%
\[
1=\lim_{m\rightarrow\infty}\left\Vert u_{l,j_{m}}\right\Vert _{jq(x)}\leq
\lim_{m\rightarrow\infty}\left\Vert u_{l,j_{m}}\right\Vert _{\infty
}=\left\Vert w_{l}\right\Vert _{\infty},
\]
so that%
\[
\mu_{l}\leq\frac{\left\Vert \nabla w_{l}\right\Vert _{lp(x)}}{\left\Vert
w_{l}\right\Vert _{\infty}}\leq\left\Vert \nabla w_{l}\right\Vert _{lp(x)}%
\leq\lim_{m\rightarrow\infty}\left\Vert \nabla u_{l,j_{m}}\right\Vert
_{lp(x)}=\lim_{m\rightarrow\infty}\Lambda_{l,j_{m}}=\mu_{l}.
\]
Hence,%
\[
\left\Vert w_{l}\right\Vert _{\infty}=1,\quad\mu_{l}=\left\Vert \nabla
w_{l}\right\Vert _{lp(x)}=\lim_{m\rightarrow\infty}\left\Vert \nabla
u_{l,j_{m}}\right\Vert _{lp(x)},
\]
implying that $w_{l}$ is an extremal function of $\mu_{l}$ and that
$u_{l,j_{m}}\rightarrow w_{l}$ strongly in $W_{0}^{1,lp(x)}(\Omega).$
\end{proof}

Now, by adapting arguments of \cite{HL16} we characterize of the extremal
functions of $\mu_{l}$. For this, let us denote by $\Gamma_{u}$ the set of the
points where a function $u\in C(\overline{\Omega})$ assumes its uniform norm,
that is%
\[
\Gamma_{u}:=\{x\in\overline{\Omega}:\left\vert u(x)\right\vert =\left\Vert
u\right\Vert _{\infty}\}.
\]

\begin{lemma}
\label{id55}Let $u,$ $\eta\in C(\overline{\Omega})$, with $u\not \equiv 0$.
One has%
\[
\lim_{\epsilon\rightarrow0^{+}}\frac{\left\Vert u+\epsilon\eta\right\Vert
_{\infty}-\left\Vert u\right\Vert _{\infty}}{\epsilon}=\max
\{\operatorname{sgn}(u(x))\eta(x):x\in\Gamma_{u}\}.
\]

\end{lemma}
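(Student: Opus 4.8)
The plan is to compute the one-sided derivative by squeezing the difference quotient between the same quantity $D:=\max\{\operatorname{sgn}(u(x))\eta(x):x\in\Gamma_u\}$ from below and from above, i.e. to treat the statement as an instance of the classical Danskin-type formula for the derivative of a maximum. Throughout I would write $M:=\left\Vert u\right\Vert _{\infty}$ and $\phi(\epsilon):=\left\Vert u+\epsilon\eta\right\Vert _{\infty}$. Since $u\not\equiv0$ is continuous on the compact set $\overline{\Omega}$, one has $M>0$, the set $\Gamma_{u}$ is nonempty and compact, and $|u|=M\neq0$ on $\Gamma_{u}$, so $\operatorname{sgn}(u(x))$ is well defined there; as a continuous function on the compact set $\Gamma_{u}$, the map $x\mapsto\operatorname{sgn}(u(x))\eta(x)$ attains its maximum, which is $D$.

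For the lower bound I would pick $x_{0}\in\Gamma_{u}$ with $\operatorname{sgn}(u(x_{0}))\eta(x_{0})=D$. For $\epsilon>0$ small, $u(x_{0})+\epsilon\eta(x_{0})$ keeps the sign of $u(x_{0})\neq0$, so that $|u(x_{0})+\epsilon\eta(x_{0})|=M+\epsilon\operatorname{sgn}(u(x_{0}))\eta(x_{0})=M+\epsilon D$. Since $\phi(\epsilon)\geq|u(x_{0})+\epsilon\eta(x_{0})|$, the difference quotient is at least $D$, which gives $\liminf_{\epsilon\to0^{+}}\frac{\phi(\epsilon)-M}{\epsilon}\geq D$.

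The upper bound is the main point. For each $\epsilon>0$ compactness of $\overline{\Omega}$ furnishes a point $x_{\epsilon}$ with $\phi(\epsilon)=|u(x_{\epsilon})+\epsilon\eta(x_{\epsilon})|$. Choosing a sequence $\epsilon_{n}\to0^{+}$ realizing $\limsup_{\epsilon\to0^{+}}\frac{\phi(\epsilon)-M}{\epsilon}$ and passing to a subsequence, I may assume $x_{\epsilon_{n}}\to x_{\ast}\in\overline{\Omega}$. The reverse triangle inequality yields $|\phi(\epsilon)-M|\leq\epsilon\left\Vert \eta\right\Vert _{\infty}$, so $\phi(\epsilon_{n})\to M$; on the other hand $|u(x_{\epsilon_{n}})+\epsilon_{n}\eta(x_{\epsilon_{n}})|\to|u(x_{\ast})|$ by continuity, whence $|u(x_{\ast})|=M$ and $x_{\ast}\in\Gamma_{u}$. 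In particular $u(x_{\ast})\neq0$, so for $n$ large both $u(x_{\epsilon_{n}})$ and $u(x_{\epsilon_{n}})+\epsilon_{n}\eta(x_{\epsilon_{n}})$ share the sign of $u(x_{\ast})$, and using $\operatorname{sgn}(u(x_{\ast}))u(x_{\epsilon_{n}})=|u(x_{\epsilon_{n}})|\leq M$ I would obtain
\[
\phi(\epsilon_{n})=\operatorname{sgn}(u(x_{\ast}))\bigl(u(x_{\epsilon_{n}})+\epsilon_{n}\eta(x_{\epsilon_{n}})\bigr)\leq M+\epsilon_{n}\operatorname{sgn}(u(x_{\ast}))\eta(x_{\epsilon_{n}}).
\]
Dividing by $\epsilon_{n}$ and letting $n\to\infty$ gives $\limsup_{\epsilon\to0^{+}}\frac{\phi(\epsilon)-M}{\epsilon}\leq\operatorname{sgn}(u(x_{\ast}))\eta(x_{\ast})\leq D$, since $x_{\ast}\in\Gamma_{u}$. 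Combining the two bounds establishes the claim.

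The delicate step is the upper bound, where one must verify that the near-maximizers $x_{\epsilon}$ of $u+\epsilon\eta$ can accumulate only on $\Gamma_{u}$; this is exactly where compactness of $\overline{\Omega}$ together with the continuity of $u$ and $\eta$ enters, and it is also what lets the sign factor $\operatorname{sgn}(u(x_{\epsilon_{n}}))$ stabilize so that it may be replaced by $\operatorname{sgn}(u(x_{\ast}))$ in the limit.
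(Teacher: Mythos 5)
Your proof is correct and follows essentially the same two-sided squeeze as the paper: evaluation at a maximizer of $\operatorname{sgn}(u)\eta$ on $\Gamma_{u}$ for the lower bound, and convergence of the near-maximizers $x_{\epsilon_{n}}$ into $\Gamma_{u}$ with stabilization of signs for the upper bound. The only difference is cosmetic: for the liminf the paper derives $\Vert u+\epsilon\eta\Vert_{\infty}\geq\Vert u\Vert_{\infty}+\epsilon\operatorname{sgn}(u(x_{0}))\eta(x_{0})$ via the inequality $\tfrac{|t+\delta|^{r}}{r}\geq\tfrac{|t|^{r}}{r}+|t|^{r-2}t\delta$ and the limit $r\to1^{+}$, whereas you observe directly that the sign of $u(x_{0})+\epsilon\eta(x_{0})$ is preserved for small $\epsilon$ — both are valid.
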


\begin{proof}
Let $r>1,$ $\delta>0$ and $t\in\mathbb{R}.$ Since the function $s\mapsto
\left\vert s\right\vert ^{r-2}s$ is increasing we have%
\[
\frac{\left\vert t+\delta\right\vert ^{r}}{r}=%
{\displaystyle\int_{0}^{t+\delta}}
\left\vert s\right\vert ^{r-2}s\mathrm{d}s=\frac{\left\vert t\right\vert ^{r}%
}{r}+%
{\displaystyle\int_{t}^{t+\delta}}
\left\vert s\right\vert ^{r-2}s\mathrm{d}s\geq\frac{\left\vert t\right\vert
^{r}}{r}+\left\vert t\right\vert ^{r-2}t\delta.
\]
Thus, for $x_{0}\in\Gamma_{u},$ $\eta\in C(\overline{\Omega})$ and
$\epsilon>0,$ we obtain
\[
\frac{\left\Vert u+\epsilon\eta\right\Vert _{\infty}^{r}}{r}\geq
\frac{\left\vert u(x_{0})+\epsilon\eta(x_{0})\right\vert ^{r}}{r}\geq
\frac{|u(x_{0})|^{r}}{r}+|u(x_{0})|^{r-2}u(x_{0})\epsilon\eta(x_{0}).
\]

Making $r\rightarrow1^{+}$ (and using that $\left\vert u(x_{0})\right\vert
=\left\Vert u\right\Vert _{\infty}\not =0$) we arrive at the inequality
\[
\frac{\left\Vert u+\epsilon\eta\right\Vert _{\infty}-\left\Vert u\right\Vert
_{\infty}}{\epsilon}\geq\operatorname{sgn}(u(x_{0}))\eta(x_{0}),
\]
which, in view of the arbitrariness of $x_{0}\in\Gamma_{u},$ implies that%
\[
\liminf_{\epsilon\rightarrow0^{+}}\frac{\left\Vert u+\epsilon\eta\right\Vert
_{\infty}-\left\Vert u\right\Vert _{\infty}}{\epsilon}\geq\max
\{\operatorname{sgn}(u(x))\eta(x):x\in\Gamma_{u}\}.
\]

In order to conclude this proof we will obtain the reverse inequality for
$\limsup_{\epsilon\rightarrow0^{+}}.$ For this, we take $\epsilon
_{m}\rightarrow0^{+}$ such that
\[
\limsup_{\epsilon\rightarrow0^{+}}\frac{\left\Vert u+\epsilon\eta\right\Vert
_{\infty}-\left\Vert u\right\Vert _{\infty}}{\epsilon}=\limsup_{m\rightarrow
\infty}\frac{\left\Vert u+\epsilon_{m}\eta\right\Vert _{\infty}-\left\Vert
u\right\Vert _{\infty}}{\epsilon_{m}}%
\]
and select a sequence $\left(  x_{m}\right)  \subset\overline{\Omega}$
satisfying
\[
\left\vert u(x_{m})+\epsilon_{m}\eta(x_{m})\right\vert =\left\Vert
u+\epsilon_{m}\eta\right\Vert _{\infty}.
\]
We can assume (by passing to a subsequence, if necessary) that $x_{m}%
\rightarrow x_{0}\in\overline{\Omega}.$ Of course, $x_{0}\in\Gamma_{u}$ since
$u+\epsilon_{m}\eta\rightarrow u$ in $C(\overline{\Omega}).$

Since $\left\Vert u\right\Vert _{\infty}\geq\left\vert u(x_{m})\right\vert $
we have%
\[
\limsup_{m\rightarrow\infty}\frac{\Vert u+\epsilon_{m}\eta\Vert_{\infty}-\Vert
u\Vert_{\infty}}{\epsilon_{m}}\leq\limsup_{m\rightarrow\infty}\frac{\left\vert
u(x_{m})+\epsilon_{m}\eta(x_{m})\right\vert -\left\vert u(x_{m})\right\vert
}{\epsilon_{m}}%
\]
and since $u(x_{m})+\epsilon_{m}\eta(x_{m})\rightarrow u(x_{0})$ we have, for
all $m$ large enough,%
\[
\frac{\left\vert u(x_{m})+\epsilon_{m}\eta(x_{m})\right\vert -\left\vert
u(x_{m})\right\vert }{\epsilon_{m}}=\left\{
\begin{array}
[c]{cc}%
\eta(x_{m}), & u(x_{0})>0\\
-\eta(x_{m}), & u(x_{0})<0
\end{array}
\right.  =\operatorname{sgn}(u(x_{0}))\eta(x_{m}).
\]
It follows that
\[
\limsup_{\epsilon\rightarrow0^{+}}\frac{\Vert u+\epsilon\eta\Vert_{\infty
}-\Vert u\Vert_{\infty}}{\epsilon}\leq\operatorname{sgn}(u(x_{0}))\eta
(x_{0})\leq\max\{\operatorname{sgn}(u(x))\eta(x):x\in\Gamma_{u}\}.
\]

\end{proof}

\begin{theorem}
\label{the1} Let $l\geq N$ be fixed. A function $u\in W_{0}^{1,lp(x)}%
(\Omega)\setminus\{0\}$ is extremal of $\mu_{l}$ if, and only if, $\Gamma
_{u}=\left\{  x_{0}\right\}  $ for some $x_{0}\in\Omega$ and%
\begin{equation}
\int_{\Omega}\left\vert \frac{\nabla u}{K_{l}(u)}\right\vert ^{lp(x)-2}%
\frac{\nabla u}{K_{l}(u)}\cdot\nabla\eta\,\mathrm{d}x=\mu_{l}S_{l}%
(u)\operatorname{sgn}(u(x_{0}))\eta(x_{0}),\quad\forall\,\eta\in
W_{0}^{1,lp(x)}(\Omega), \label{extr}%
\end{equation}
where
\[
K_{l}(u):=\left\Vert \nabla u\right\Vert _{lp(x)}\quad\text{\textrm{and}}\quad
S_{l}(u):=%
{\displaystyle\int_{\Omega}}
\left\vert \frac{\nabla u}{K_{l}(u)}\right\vert ^{lp(x)}\mathrm{d}x.
\]

\end{theorem}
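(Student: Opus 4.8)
The plan is to prove the two implications separately, with almost all of the work concentrated in the forward direction. Throughout I exploit that the quotient $Q(v):=\|\nabla v\|_{lp(x)}/\|v\|_{\infty}=K_{l}(v)/\|v\|_{\infty}$ is positively $0$-homogeneous, so I may normalize any extremal $u$ by $\|u\|_{\infty}=1$, giving $K_{l}(u)=\mu_{l}$. Since $W_{0}^{1,lp(x)}(\Omega)\hookrightarrow C(\overline{\Omega})$ and $u=0$ on $\partial\Omega$, every point of $\Gamma_{u}$ is interior, so $\Gamma_{u}\subset\Omega$ automatically; this is what will eventually place the singleton $x_{0}$ in $\Omega$.

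For the forward implication I fix $\eta\in W_{0}^{1,lp(x)}(\Omega)$ and differentiate $\varepsilon\mapsto Q(u+\varepsilon\eta)$ at $\varepsilon=0^{+}$. The numerator $K_{l}$ is $C^{1}$ with Gateaux derivative given by Lemma \ref{lemfl} (with $p(x)$ replaced by $lp(x)$), namely $K_{l}'(u)[\eta]=S_{l}(u)^{-1}\int_{\Omega}|\nabla u/K_{l}(u)|^{lp(x)-2}\frac{\nabla u}{K_{l}(u)}\cdot\nabla\eta\,\mathrm{d}x$, whereas the denominator $\|\cdot\|_{\infty}$ admits only the one-sided derivative $\max\{\operatorname{sgn}(u(x))\eta(x):x\in\Gamma_{u}\}$ supplied by Lemma \ref{id55}. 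Applying the quotient rule for right derivatives (legitimate because only the denominator fails to be differentiable, and $\|u\|_{\infty}=1>0$) together with the minimality $Q(u+\varepsilon\eta)\geq\mu_{l}=Q(u)$, which forces the right derivative to be nonnegative, I obtain $K_{l}'(u)[\eta]\geq\mu_{l}\max\{\operatorname{sgn}(u(x))\eta(x):x\in\Gamma_{u}\}$ for all $\eta$. Replacing $\eta$ by $-\eta$ and invoking linearity of $K_{l}'(u)$ yields the companion bound $K_{l}'(u)[\eta]\leq\mu_{l}\min\{\operatorname{sgn}(u(x))\eta(x):x\in\Gamma_{u}\}$, so the linear functional $K_{l}'(u)[\eta]$ is sandwiched between $\mu_{l}\min$ and $\mu_{l}\max$ of $\operatorname{sgn}(u(\cdot))\eta(\cdot)$ over $\Gamma_{u}$.

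The decisive step is to read both conclusions off this sandwich. Because $\max\geq\min$ always and $\mu_{l}>0$, equality is forced, so $x\mapsto\operatorname{sgn}(u(x))\eta(x)$ must be constant on $\Gamma_{u}$ for every admissible $\eta$. If $\Gamma_{u}$ contained two distinct points I could construct, using the embedding into $C(\overline{\Omega})$, a test function $\eta$ separating their values and thereby violate this constancy; hence $\Gamma_{u}=\{x_{0}\}$ with $x_{0}\in\Omega$. With a singleton the sandwich collapses to the identity $K_{l}'(u)[\eta]=\mu_{l}\operatorname{sgn}(u(x_{0}))\eta(x_{0})$, which after clearing the factor $S_{l}(u)^{-1}$ is exactly \eqref{extr}. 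For the converse I assume $\Gamma_{u}=\{x_{0}\}$, $x_{0}\in\Omega$, and \eqref{extr}, and simply test with $\eta=u$: since $\frac{\nabla u}{K_{l}(u)}\cdot\nabla u=K_{l}(u)\,|\nabla u/K_{l}(u)|^{2}$, the left side equals $K_{l}(u)S_{l}(u)$, while the right side equals $\mu_{l}S_{l}(u)\operatorname{sgn}(u(x_{0}))u(x_{0})=\mu_{l}S_{l}(u)\|u\|_{\infty}$; cancelling $S_{l}(u)>0$ gives $K_{l}(u)=\mu_{l}\|u\|_{\infty}$, that is $Q(u)=\mu_{l}$, so $u$ is extremal.

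I expect the main obstacle to be the forward direction. Two points require care: first, justifying the one-sided quotient rule and the sign of the directional derivative when only $\|\cdot\|_{\infty}$ is nonsmooth; and second, converting the two-sided inequalities into the rigidity statement that $\Gamma_{u}$ is a single interior point. The separation argument for distinct points of $\Gamma_{u}$ must be carried out within the admissible class $W_{0}^{1,lp(x)}(\Omega)$, which is legitimate precisely because $lp^{-}>N$ yields the compact embedding $W_{0}^{1,lp(x)}(\Omega)\hookrightarrow C(\overline{\Omega})$, guaranteeing enough continuous test functions with prescribed interior values.
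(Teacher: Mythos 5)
Your proposal is correct and follows essentially the same route as the paper: both directions use Lemma \ref{lemfl} for the Gateaux derivative of $K_{l}$ and Lemma \ref{id55} for the one-sided derivative of the sup norm, derive the same max/min sandwich on the linear functional $\eta\mapsto\int_{\Omega}\left\vert \frac{\nabla u}{K_{l}(u)}\right\vert ^{lp(x)-2}\frac{\nabla u}{K_{l}(u)}\cdot\nabla\eta\,\mathrm{d}x$ by replacing $\eta$ with $-\eta$, conclude that $\Gamma_{u}$ is a singleton from the arbitrariness of $\eta$, and prove the converse by testing with $\eta=u$. The only differences are cosmetic: you normalize $\left\Vert u\right\Vert _{\infty}=1$ and spell out the separation argument that the paper leaves implicit.
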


\begin{proof}
Let $u\in W_{0}^{1,lp(x)}(\Omega)\setminus\{0\}$ be an extremal function of
$\mu_{l}$ and fix $\eta\in W_{0}^{1,lp(x)}(\Omega).$ Then%
\[
\mu_{l}=\frac{\left\Vert \nabla u\right\Vert _{lp(x)}}{\left\Vert u\right\Vert
_{\infty}}\leq\frac{\left\Vert \nabla u+\epsilon\nabla\eta\right\Vert
_{lp(x)}}{\left\Vert u+\epsilon\eta\right\Vert _{\infty}},\quad\forall
\,\epsilon>0.
\]
It follows that%
\begin{align*}
0 &  \leq\lim_{\epsilon\rightarrow0^{+}}\frac{1}{\epsilon}\left(
\frac{\left\Vert \nabla u+\epsilon\nabla\eta\right\Vert _{lp(x)}}{\left\Vert
u+\epsilon\eta\right\Vert _{\infty}}-\frac{\left\Vert \nabla u\right\Vert
_{lp(x)}}{\left\Vert u\right\Vert _{\infty}}\right)  \\
&  =\lim_{\epsilon\rightarrow0^{+}}\frac{1}{\epsilon}\left(  \frac{\left\Vert
\nabla u+\epsilon\nabla\eta\right\Vert _{lp(x)}}{\left\Vert u+\epsilon
\eta\right\Vert _{\infty}}-\frac{\left\Vert \nabla u\right\Vert _{lp(x)}%
}{\left\Vert u+\epsilon\eta\right\Vert _{\infty}}+\frac{\left\Vert \nabla
u\right\Vert _{lp(x)}}{\left\Vert u+\epsilon\eta\right\Vert _{\infty}}%
-\frac{\left\Vert \nabla u\right\Vert _{lp(x)}}{\left\Vert u\right\Vert
_{\infty}}\right)  \\
&  =\frac{1}{\left\Vert u\right\Vert _{\infty}}\left(  \frac{1}{S_{l}(u)}%
\int_{\Omega}\left\vert \frac{\nabla u}{K_{l}(u)}\right\vert ^{lp(x)-2}%
\frac{\nabla u}{K_{l}(u)}\cdot\nabla\eta\,\mathrm{d}x-\frac{\left\Vert \nabla
u\right\Vert _{lp(x)}}{\left\Vert u\right\Vert _{\infty}}\max
\{\operatorname{sgn}(u(x))\eta(x):x\in\Gamma_{u}\}\right)  \\
&  =\frac{1}{\left\Vert u\right\Vert _{\infty}}\left(  \frac{1}{S_{l}(u)}%
\int_{\Omega}\left\vert \frac{\nabla u}{K_{l}(u)}\right\vert ^{lp(x)-2}%
\frac{\nabla u}{K_{l}(u)}\cdot\nabla\eta\,\mathrm{d}x-\mu_{l}\max
\{\operatorname{sgn}(u(x))\eta(x):x\in\Gamma_{u}\}\right)  ,
\end{align*}
where we have used Lemma \ref{lemfl} and Lemma \ref{id55}.

Therefore,%
\begin{equation}
\int_{\Omega}\left\vert \frac{\nabla u}{K_{l}(u)}\right\vert ^{lp(x)-2}%
\frac{\nabla u}{K_{l}(u)}\cdot\nabla\eta\,\mathrm{d}x\geq\mu_{l}S_{l}%
(u)\max\{\operatorname{sgn}(u(x))\eta(x):x\in\Gamma_{u}\}. \label{aux1}%
\end{equation}

Now, by replacing $\eta$ by $-\eta$ in this inequality we obtain%
\begin{equation}
\int_{\Omega}\left\vert \frac{\nabla u}{K_{l}(u)}\right\vert ^{lp(x)-2}%
\frac{\nabla u}{K_{l}(u)}\cdot\nabla\eta\,\mathrm{d}x\leq\mu_{l}S_{l}%
(u)\min\{\operatorname{sgn}(u(x))\eta(x):x\in\Gamma_{u}\}. \label{aux2}%
\end{equation}

We then conclude from (\ref{aux1}) and (\ref{aux2}) that%
\begin{align*}
\mu_{l}S_{l}(u)\min\left\{  \operatorname{sgn}(u(x))\eta(x):x\in\Gamma
_{u}\right\}    & =\int_{\Omega}\left\vert \frac{\nabla u}{K_{l}%
(u)}\right\vert ^{lp(x)-2}\frac{\nabla u}{K_{l}(u)}\cdot\nabla\eta
\,\mathrm{d}x\\
& =\mu_{l}S_{l}(u)\max\left\{  \operatorname{sgn}(u(x))\eta(x):x\in\Gamma
_{u}\right\}  .
\end{align*}
Taking into account the arbitrariness of $\eta\in W_{0}^{1,lp(x)}(\Omega)$
this implies that $\Gamma_{u}=\left\{  x_{0}\right\}  $ for some $x_{0}%
\in\Omega.$ Consequently, $u$ satisfies (\ref{extr}) for $x_{0}.$

Reciprocally, if $u\in W_{0}^{1,lp(x)}(\Omega)\setminus\{0\}$ is such that
$\Gamma_{u}=\left\{  x_{0}\right\}  $ for some $x_{0}\in\Omega$ and,
additionally, satisfies (\ref{extr}) for this point, we can choose $\eta=u$ in
(\ref{extr}) to get%
\[
\int_{\Omega}\left\vert \frac{\nabla u}{K_{l}(u)}\right\vert ^{lp(x)-2}%
\frac{\nabla u}{K_{l}(u)}\cdot\nabla u\,\mathrm{d}x=\mu_{l}S_{l}%
(u)\operatorname{sgn}(u(x_{0}))u(x_{0})=\mu_{l}S_{l}(u)\left\Vert u\right\Vert
_{\infty},
\]
so that%
\[
\mu_{l}=\frac{\left\Vert \nabla u\right\Vert _{lp(x)}}{\left\Vert u\right\Vert
_{\infty}}.
\]

\end{proof}

\begin{corollary}
Extremal functions of $\mu_{l}$ do not change sign in $\Omega$.
\end{corollary}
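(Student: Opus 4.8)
The plan is to reduce the statement to the strong maximum principle of Proposition \ref{max}, applied not to the extremal function itself but to its absolute value. First I would observe that the quotient defining $\mu_{l}$ is invariant under $u\mapsto|u|$: one has $\left\Vert |u|\right\Vert_{\infty}=\left\Vert u\right\Vert_{\infty}$ and $|\nabla|u||=|\nabla u|$ a.e.\ in $\Omega$, so $K_{l}(|u|)=K_{l}(u)$. Consequently, if $u$ is an extremal function of $\mu_{l}$, then $|u|$ is a nonnegative element of $W_{0}^{1,lp(x)}(\Omega)\setminus\{0\}$ that is again extremal for $\mu_{l}$.

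Next I would invoke Theorem \ref{the1} for $|u|$. Since $|u|$ is extremal, the theorem produces a single point $x_{0}\in\Omega$ with $\Gamma_{|u|}=\{x_{0}\}$ such that identity (\ref{extr}) holds for $|u|$. Because $|u|(x_{0})=\left\Vert u\right\Vert_{\infty}>0$ we have $\operatorname{sgn}(|u|(x_{0}))=1$, so that for every nonnegative test function $\eta\in W_{0}^{1,lp(x)}(\Omega)$ the right-hand side $\mu_{l}S_{l}(|u|)\eta(x_{0})$ of (\ref{extr}) is nonnegative. Setting $w:=|u|/K_{l}(|u|)\geq0$, the left-hand side of (\ref{extr}) is exactly $\int_{\Omega}|\nabla w|^{lp(x)-2}\nabla w\cdot\nabla\eta\,\mathrm{d}x$, and therefore
\[
\int_{\Omega}|\nabla w|^{lp(x)-2}\nabla w\cdot\nabla\eta\,\mathrm{d}x\geq0,\qquad\forall\,\eta\in W_{0}^{1,lp(x)}(\Omega),\ \eta\geq0,
\]
which is precisely the weak inequality $-\Delta_{lp(x)}w\geq0$ in $\Omega$.

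Finally, since $w\geq0$, $w\in W_{0}^{1,lp(x)}(\Omega)\setminus\{0\}$, and $lp\in C^{1}(\overline{\Omega})$ with $lp^{-}>1$, Proposition \ref{max} yields $w>0$ in $\Omega$, hence $|u|>0$ throughout $\Omega$. As $u\in C(\overline{\Omega})$ by the compact embedding $W_{0}^{1,lp(x)}(\Omega)\hookrightarrow C(\overline{\Omega})$, and $\Omega$ is connected, a continuous function that vanishes nowhere in $\Omega$ cannot take both signs; this gives the conclusion. The one point requiring care is that the maximum principle cannot be applied to $u$ directly, since $u$ may change sign: the argument only works after passing to $|u|$, and I expect the main (though routine) check to be verifying that the normalization by $K_{l}(|u|)$ leaves the derived inequality in the exact weak form $-\Delta_{lp(x)}w\geq0$ demanded by Proposition \ref{max}.
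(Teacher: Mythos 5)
Your proof is correct and rests on the same two ingredients as the paper's: the characterization of extremals in Theorem \ref{the1} and the strong maximum principle of Proposition \ref{max}. The only difference is organizational, but it is a point in your favor. The paper applies Theorem \ref{the1} to $u$ itself, splits into the cases $u(x_{0})>0$ and $u(x_{0})<0$, and invokes Proposition \ref{max} directly on $u$ (respectively $-u$) --- even though that proposition, as stated, requires the function to be nonnegative \emph{a priori}, which is exactly what is not yet known at that stage. By first observing that $|u|$ is again extremal (since $\bigl\Vert\,|u|\,\bigr\Vert_{\infty}=\Vert u\Vert_{\infty}$ and $\bigl|\nabla|u|\bigr|=|\nabla u|$ a.e.), applying Theorem \ref{the1} to $|u|$, and only then using the maximum principle on the genuinely nonnegative function $|u|/K_{l}(|u|)$, you satisfy the hypotheses of Proposition \ref{max} literally; the final step, that a continuous nowhere-vanishing function on the connected set $\Omega$ has constant sign, cleanly converts $|u|>0$ into the claimed conclusion. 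So your argument is essentially the paper's, tightened at the one place where the paper is slightly loose.
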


\begin{proof}
Let $u\in W_{0}^{1,lp(x)}(\Omega)\setminus\{0\}$ be an extremal function of
$\mu_{l}$ and $x_{0}\in\Omega$ the only point where $u$ achieves its uniform
norm. If $u(x_{0})>0,$ Theorem \ref{the1} yields
\[
\int_{\Omega}\left\vert \frac{\nabla u}{K_{l}(u)}\right\vert ^{lp(x)-2}%
\frac{\nabla u}{K_{l}(u)}\cdot\nabla\eta\,\mathrm{d}x=\mu_{l}S_{l}%
(u)\eta(x_{0})\geq0,
\]
for all nonnegative $\eta\in W_{0}^{1,lp(x)}(\Omega).$ Proposition \ref{max}
then implies that $u>0$ in $\Omega.$ If $u(x_{0})<0$ we repeat the argument
for the extremal function $-u.$
\end{proof}

We can say that
\begin{equation}
-\operatorname{div}\left(  \left\vert \frac{\nabla u}{K_{l}(u)}\right\vert
^{lp(x)-2}\frac{\nabla u}{K_{l}(u)}\right)  =\mu_{l}S_{l}(u)\operatorname{sgn}%
(u(x_{0}))\delta_{x_{0}}, \label{nov}%
\end{equation}
is the Euler-Lagrange equation associated with the minimization problem
defined by (\ref{mmul}), where $\delta_{x_{0}}$ is the Dirac delta function
concentrated in $x_{0}$. We recall that $\delta_{x_{0}}$ is defined by%
\[
\delta_{x_{0}}(\eta)=\eta(x_{0}),\quad\forall\,\eta\in W_{0}^{1,lp(x)}%
(\Omega).
\]
Thus, the extremal functions of $\mu_{l}$ are precisely the weak solutions of
(\ref{nov}) in the sense of (\ref{extr}).

\begin{remark}
Consider a function $v\in W_{0}^{1,lp(x)}(\Omega)$ such that $\left\vert
v(x_{0})\right\vert =\Vert v\Vert_{\infty}$ for some $x_{0}\in\Omega$ and
suppose that this function satisfies the equation%
\[
\int_{\Omega}\left\vert \frac{\nabla v}{K_{l}(v)}\right\vert ^{lp(x)-2}%
\frac{\nabla v}{K_{l}(v)}\cdot\nabla\eta\,\mathrm{d}x=\mu\left(  \int_{\Omega
}\left\vert \frac{\nabla v}{K_{l}(v)}\right\vert ^{lp(x)}\mathrm{d}x\right)
\operatorname{sgn}(v(x_{0}))\eta(x_{0}),\quad\forall\,\eta\in W_{0}%
^{1,lp(x)}(\Omega)
\]
where $\mu\in\mathbb{R}$. By making $\eta=v$, it follows that%
\[
\mu=\frac{K_{l}(v)}{\left\vert v(x_{0})\right\vert }=\frac{\left\Vert \nabla
v\right\Vert _{lp(x)}}{\left\Vert v\right\Vert _{\infty}}\geq\mu_{l}.
\]
Thus, $\mu_{l}$ can be interpreted as the first eigenvalue of (\ref{extr}).
Moreover, for a given natural $j\geq1$, we know, from Section 3, that there
exists a sequence
\[
0<\lambda_{1}^{l,j}\leq\lambda_{2}^{l,j}\leq\cdots\leq\lambda_{n}^{l,j}%
\leq\lambda_{n+1}^{l,j}\leq\cdots
\]
of eigenvalues, where the exponent functions, in this case, are $lp(x)$ and
$jq(x).$ Proposition \ref{iguan} then says that%
\[
\lim_{j\rightarrow\infty}\lambda_{1}^{l,j}=\mu_{l}.
\]

\end{remark}

\section{The limit problem as $l\rightarrow\infty$}

In this section we maintain $p\in C^{1}(\overline{\Omega}),$ with $1<p^{-}\leq
p^{+}<\infty.$ For each natural $l\geq N$ we denote by $w_{l}$ a positive,
$L^{\infty}$-normalized extremal function of $\mu_{l}.$ Thus,%

\[
w_{l}\in W_{0}^{1,lp(x)}(\Omega),\mathrm{\quad}\left\Vert w_{l}\right\Vert
_{\infty}=1,\mathrm{\quad}w_{l}>0\mathrm{\quad in\,}\Omega,
\]
and%
\[
\mu_{l}=\Vert\nabla w_{l}\Vert_{lp(x)}\leq\frac{\Vert\nabla v\Vert_{lp(x)}%
}{\Vert v\Vert_{\infty}},\quad\forall\,v\in W_{0}^{1,lp(x)}(\Omega
)\setminus\{0\}.
\]

We will also denote by $x_{0}^{l}$ the only maximum point of $w_{l}.$
According to the previous section, $w_{l}$ satisfies
\[
-\Delta_{lp(x)}\left(  \frac{w_{l}}{K_{l}(w_{l})}\right)  =\mu_{l}S_{l}%
(w_{l})\delta_{x_{0}^{l}}\quad\mathrm{in}\ \quad\Omega,
\]
where $\delta_{x_{0}^{l}}$ is the Dirac delta function concentrated in
$x_{0}^{l},$%
\[
K_{l}(w_{l})=\Vert\nabla w_{l}\Vert_{lp(x)}=\mu_{l}\mathrm{\quad and\quad
}S_{l}(w_{l}):=\int_{\Omega}\left\vert \frac{\nabla w_{l}}{K_{l}(w_{l}%
)}\right\vert ^{lp(x)}\mathrm{d}x.
\]
Hence,
\[
\int_{\Omega}\left\vert \frac{\nabla w_{l}}{K_{l}(w_{l})}\right\vert
^{lp(x)-2}\frac{\nabla w_{l}}{K_{l}(w_{l})}\cdot\nabla\eta\,\mathrm{d}%
x=\mu_{l}S_{l}(w_{l})\eta(x_{0}^{l}),\quad\forall\,\eta\in W_{0}%
^{1,lp(x)}(\Omega).
\]

Let us define
\[
\Lambda_{\infty}:=\inf\left\{  \frac{\left\Vert \nabla v\right\Vert _{\infty}%
}{\left\Vert v\right\Vert _{\infty}}:v\in W_{0}^{1,\infty}(\Omega
)\setminus\{0\}\right\}  .
\]
It is a well-known fact that
\[
\Lambda_{\infty}=\frac{\left\Vert \nabla d\right\Vert _{\infty}}{\left\Vert
d\right\Vert _{\infty}}=\frac{1}{\left\Vert d\right\Vert _{\infty}},
\]
where $d$ denotes the distance function to the boundary $\partial\Omega$,
defined by%
\[
d(x):=\inf_{y\in\partial\Omega}\left\vert x-y\right\vert ,\quad x\in
\overline{\Omega}.
\]
We recall that
\[
d\in W_{0}^{1,\infty}(\Omega)\mathrm{\quad and}\quad\left\vert \nabla
d\right\vert =1\text{\ }\mathrm{a.e.}\text{\textrm{\ }}\mathrm{in}%
\text{\textrm{\ }}\Omega.
\]

\begin{lemma}
\label{ineq}Let $\alpha:=\int_{\Omega}\frac{\mathrm{d}x}{p(x)}$ and
$e:=\exp(1).$ If $\alpha e<m<l$, then%
\[
\left\Vert u\right\Vert _{mp(x)}\leq\left\Vert u\right\Vert _{lp(x)}%
\quad\forall\,u\in L^{lp(x)}(\Omega).
\]

\end{lemma}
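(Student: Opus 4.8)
The plan is to reduce the norm inequality to a comparison of the corresponding modular functionals and then to an elementary integral estimate via Jensen's inequality. Fix $u\in L^{lp(x)}(\Omega)$, which we may assume nonzero, and set $a:=\left\Vert u\right\Vert _{lp(x)}>0$. By item a) of Proposition \ref{modul} applied with the exponent $lp(x)$, we have $\int_{\Omega}\left\vert u/a\right\vert ^{lp(x)}\frac{\mathrm{d}x}{lp(x)}=1$. To conclude that $\left\Vert u\right\Vert _{mp(x)}\leq a$ it suffices, directly from the definition of the Luxemburg norm (\ref{normpx}), to verify that $a$ lies in the admissible set for the exponent $mp(x)$, i.e. that $\int_{\Omega}\left\vert u/a\right\vert ^{mp(x)}\frac{\mathrm{d}x}{mp(x)}\leq1$; then the infimum defining $\left\Vert u\right\Vert _{mp(x)}$ is at most $a$.

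Introducing $t(x):=\left\vert u(x)/a\right\vert ^{p(x)}\geq0$, the normalization becomes $\int_{\Omega}\frac{t^{l}}{p}\,\mathrm{d}x=l$ and the target becomes $\int_{\Omega}\frac{t^{m}}{p}\,\mathrm{d}x\leq m$. I would then bring in the probability measure $\mathrm{d}\nu:=\alpha^{-1}p(x)^{-1}\mathrm{d}x$ on $\Omega$, which has total mass $1$ by the definition of $\alpha$. The constraint reads $\int_{\Omega}t^{l}\,\mathrm{d}\nu=l/\alpha$, and since $0<m/l<1$ the map $s\mapsto s^{m/l}$ is concave, so Jensen's inequality gives $\int_{\Omega}t^{m}\,\mathrm{d}\nu\leq\left(\int_{\Omega}t^{l}\,\mathrm{d}\nu\right)^{m/l}=(l/\alpha)^{m/l}$. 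Multiplying through by $\alpha$ yields $\int_{\Omega}\frac{t^{m}}{p}\,\mathrm{d}x\leq\alpha^{(l-m)/l}\,l^{m/l}$.

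It then remains to establish the purely numerical inequality $\alpha^{(l-m)/l}\,l^{m/l}\leq m$, which is where the hypothesis $\alpha e<m$ enters and which I expect to be the main (though elementary) obstacle. Raising to the power $l$ and rearranging, this is equivalent to $(\alpha/m)^{l-m}\leq(m/l)^{m}$, i.e., after taking logarithms and dividing by $l-m>0$, to $\log(\alpha/m)\leq\frac{s}{1-s}\log s$ with $s:=m/l\in(0,1)$. The hypothesis $\alpha e<m$ gives $\log(\alpha/m)<-1$, so it is enough to show that $\frac{s}{1-s}\log s\geq-1$ for all $s\in(0,1)$; this follows from the one-variable inequality $h(s):=s\log s-s+1>0$ on $(0,1)$, obtained by noting that $h(1)=0$ and $h^{\prime}(s)=\log s<0$, so that $h$ decreases to $0$ at $s=1$. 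Chaining these three estimates closes the argument; the only points requiring care are the correctness of the reduction to the modular and the verification that Jensen's inequality is applied with the genuinely concave exponent $m/l<1$.
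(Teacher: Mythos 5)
Your argument is correct and is essentially the paper's own proof in different clothing: reducing to the modular comparison $\rho_m(u/a)\leq 1$, your Jensen step with the probability measure $\alpha^{-1}p(x)^{-1}\mathrm{d}x$ and the concave power $s\mapsto s^{m/l}$ is exactly the paper's H\"older inequality with exponents $l/m$ and $l/(l-m)$, and your final numerical inequality $\alpha^{(l-m)/l}l^{m/l}\leq m$ is precisely the paper's statement that $f(s)=(s/\alpha)^{1/s}$ is decreasing on $(\alpha e,\infty)$, which you verify by an equivalent explicit logarithmic computation. No gaps; the only cosmetic difference is that you spell out the elementary one-variable inequality $s\log s-s+1\geq 0$ rather than quoting the monotonicity of $f$.
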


\begin{proof}
When $u\equiv0$ the equality holds trivially in the above inequality. Thus, we
fix $u\in L^{lp(x)}(\Omega)\setminus\left\{  0\right\}  $ and denote the
modular functions associated to $L^{mp(x)}(\Omega)$ and $L^{lp(x)}(\Omega)$ by
$\rho_{m}$ and $\rho_{l}$, respectively.

By H\"{o}lder's inequality%
\begin{align*}
\rho_{m}(u)  &  =\int_{\Omega}\left\vert u\right\vert ^{mp(x)}\frac
{\mathrm{d}x}{mp(x)}\\
&  =\frac{1}{m}\int_{\Omega}\frac{\left\vert u\right\vert ^{mp(x)}}%
{p(x)^{m/l}}\left(  \frac{1}{p(x)}\right)  ^{\frac{l-m}{l}}\mathrm{d}x\\
&  \leq\frac{1}{m}\left(  \int_{\Omega}\frac{\left\vert u\right\vert ^{lp(x)}%
}{p(x)}\right)  ^{\frac{m}{l}}\left(  \int_{\Omega}\frac{\mathrm{d}x}%
{p(x)}\right)  ^{1-\frac{m}{l}}=\frac{1}{m}\left(  l\rho_{l}(u)\right)
^{\frac{m}{l}}\alpha^{1-\frac{m}{l}}=\left(  \frac{f(l)}{f(m)}\rho
_{l}(u)^{\frac{1}{l}}\right)  ^{m},
\end{align*}
where $f(s)=(\frac{s}{\alpha})^{\frac{1}{s}}.$ Since $f$ is decreasing in
$(\alpha e,\infty)$ and $\alpha e<m<l$ we have
\[
\rho_{m}(u)^{\frac{1}{m}}\leq\frac{f(l)}{f(m)}\rho_{l}(u)^{\frac{1}{l}}%
\leq\rho_{l}(u)^{\frac{1}{l}}.
\]

Hence, by taking $a=\left\Vert u\right\Vert _{lp(x)}\neq0$ and applying item
$\mathrm{b)}$ of Proposition \ref{modul} we conclude that
\[
\rho_{m}\left(  \frac{u}{a}\right)  ^{\frac{1}{m}}\leq\rho_{l}\left(  \frac
{u}{a}\right)  ^{\frac{1}{l}}=1
\]
and then that $\left\Vert \frac{u}{a}\right\Vert _{mp(x)}\leq1.$ This implies
that%
\[
\left\Vert u\right\Vert _{mp(x)}\leq a=\left\Vert u\right\Vert _{lp(x)}.
\]

\end{proof}

\begin{proposition}
\label{aux}There exists a subsequence of $\left(  w_{l}\right)  _{l\in
\mathbb{N}}$ converging strongly in $C(\overline{\Omega})$ to a nonnegative
function $w_{\infty}\in W_{0}^{1,\infty}(\Omega)\setminus\left\{  0\right\}  $
such that%
\begin{equation}
\lim_{l\rightarrow\infty}\mu_{l}=\Lambda_{\infty}=\left\Vert \nabla w_{\infty
}\right\Vert _{\infty}. \label{aux8}%
\end{equation}
Moreover,
\begin{equation}
0\leq w_{\infty}(x)\leq\frac{d(x)}{\left\Vert d\right\Vert _{\infty}%
},\mathrm{\quad for\,almost\,every\,}x\in\Omega. \label{aux9}%
\end{equation}

\end{proposition}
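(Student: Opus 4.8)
The plan is to produce $w_{\infty}$ as a uniform limit of a subsequence of $(w_{l})$, to pin down $\lim_{l}\mu_{l}$ by a sandwich between a $\limsup$ estimate (from a good test function) and a $\liminf$ estimate (from lower semicontinuity), and finally to read off the pointwise bound from the Lipschitz character of $w_{\infty}$. First I would test the definition (\ref{mmul}) of $\mu_{l}$ with the distance function $d$, which lies in $W_{0}^{1,\infty}(\Omega)\subset W_{0}^{1,lp(x)}(\Omega)$ and satisfies $\vert\nabla d\vert=1$ a.e.; using the version of Lemma \ref{infi} valid for the uniformly divergent exponents $lp(x)$ one gets $\Vert\nabla d\Vert_{lp(x)}\to\Vert\nabla d\Vert_{\infty}=1$, so $\mu_{l}\leq\Vert\nabla d\Vert_{lp(x)}/\Vert d\Vert_{\infty}$ yields $\limsup_{l\to\infty}\mu_{l}\leq 1/\Vert d\Vert_{\infty}=\Lambda_{\infty}$. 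In particular $(\mu_{l})$ is bounded. Now fix a number $m>\max\{\alpha e,\,N/p^{-}\}$, with $\alpha=\int_{\Omega}\mathrm{d}x/p(x)$ as in Lemma \ref{ineq}. For $l>m$, Lemma \ref{ineq} gives $\Vert\nabla w_{l}\Vert_{mp(x)}\leq\Vert\nabla w_{l}\Vert_{lp(x)}=\mu_{l}$, so $(w_{l})_{l>m}$ is bounded in $W_{0}^{1,mp(x)}(\Omega)$; since $mp^{-}>N$, the compact embedding $W_{0}^{1,mp(x)}(\Omega)\hookrightarrow C(\overline{\Omega})$ recalled at the start of this section lets me extract $l_{n}\to\infty$ with $w_{l_{n}}\to w_{\infty}$ uniformly in $\overline{\Omega}$. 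Uniform convergence together with $\Vert w_{l}\Vert_{\infty}=1$ and $w_{l}\geq 0$ forces $\Vert w_{\infty}\Vert_{\infty}=1$ (hence $w_{\infty}\not\equiv 0$) and $w_{\infty}\geq 0$, and as a uniform limit of functions vanishing on $\partial\Omega$, $w_{\infty}\in C(\overline{\Omega})$ vanishes there as well.

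The crux is the gradient bound. By weak lower semicontinuity of the norm of $W_{0}^{1,mp(x)}(\Omega)$ (the relevant weak limit of $w_{l_{n}}$ necessarily coinciding with $w_{\infty}$), combined with the first paragraph,
\[
\Vert\nabla w_{\infty}\Vert_{mp(x)}\leq\liminf_{n}\Vert\nabla w_{l_{n}}\Vert_{mp(x)}\leq\liminf_{n}\mu_{l_{n}}\leq\Lambda_{\infty},
\]
and, crucially, this holds for \emph{every} admissible $m$. To upgrade these uniform modular bounds to an $L^{\infty}$ bound I would argue by contradiction: if $\vert\nabla w_{\infty}\vert\geq\theta\Lambda_{\infty}$ on a set $A$ of positive measure for some $\theta>1$, then the modular $\rho_{mp}$ of $L^{mp(x)}(\Omega)$ satisfies $\rho_{mp}(\nabla w_{\infty}/\Lambda_{\infty})\geq\theta^{mp^{-}}\vert A\vert/(mp^{+})\to\infty$ as $m\to\infty$, which contradicts $\rho_{mp}(\nabla w_{\infty}/\Lambda_{\infty})\leq 1$ — the latter being exactly what $\Vert\nabla w_{\infty}\Vert_{mp(x)}\leq\Lambda_{\infty}$ means by item (b) of Proposition \ref{modul} and the homogeneity of the norm. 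Hence $\nabla w_{\infty}\in L^{\infty}(\Omega)$ with $\Vert\nabla w_{\infty}\Vert_{\infty}\leq\Lambda_{\infty}$, so $w_{\infty}\in W_{0}^{1,\infty}(\Omega)\setminus\{0\}$; since $w_{\infty}$ is then admissible for $\Lambda_{\infty}$ and $\Vert w_{\infty}\Vert_{\infty}=1$, the reverse inequality $\Lambda_{\infty}\leq\Vert\nabla w_{\infty}\Vert_{\infty}$ is immediate, giving $\Vert\nabla w_{\infty}\Vert_{\infty}=\Lambda_{\infty}$.

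To finish (\ref{aux8}) I would now let $m\to\infty$ in the displayed inequality: using Lemma \ref{infi} for $\nabla w_{\infty}\in L^{\infty}$ one obtains $\Lambda_{\infty}=\Vert\nabla w_{\infty}\Vert_{\infty}=\lim_{m}\Vert\nabla w_{\infty}\Vert_{mp(x)}\leq\liminf_{n}\mu_{l_{n}}$. Applying the very same extraction to an arbitrary subsequence of $(\mu_{l})$ shows $\liminf_{l\to\infty}\mu_{l}\geq\Lambda_{\infty}$, which together with the $\limsup$ estimate of the first paragraph yields the full limit $\lim_{l\to\infty}\mu_{l}=\Lambda_{\infty}$, as required.

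Finally, for (\ref{aux9}) I would exploit that $w_{\infty}\geq 0$, that $w_{\infty}$ vanishes on $\partial\Omega$, and that $\Vert\nabla w_{\infty}\Vert_{\infty}=\Lambda_{\infty}=1/\Vert d\Vert_{\infty}$ makes $w_{\infty}$ Lipschitz with constant $\Lambda_{\infty}$ along segments contained in $\Omega$: for $x\in\Omega$ the segment joining $x$ to a nearest boundary point $y$ (so $\vert x-y\vert=d(x)$) lies in $B(x,d(x))\subset\Omega$, and integrating $\nabla w_{\infty}$ along it gives $w_{\infty}(x)=w_{\infty}(x)-w_{\infty}(y)\leq\Lambda_{\infty}\,d(x)=d(x)/\Vert d\Vert_{\infty}$. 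The hard part is the second paragraph: securing $\nabla w_{\infty}\in L^{\infty}$ with the \emph{sharp} constant $\Lambda_{\infty}$ rather than one depending on $\Omega$. The modular contradiction argument, which uses that $mp^{-}\to\infty$, is precisely what makes the constant sharp and allows Lemma \ref{infi} to close the limit interchange between $l\to\infty$ and $m\to\infty$.
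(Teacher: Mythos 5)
Your proof is correct and follows essentially the same route as the paper's: testing $\mu_{l}$ with $d$ for the $\limsup$, using Lemma \ref{ineq} plus the compact embedding to extract a uniformly convergent subsequence, passing to weak limits in each $W_{0}^{1,mp(x)}(\Omega)$, and letting $m\rightarrow\infty$ to close the sandwich. The only difference is that where the paper simply asserts $w_{\infty}\in W_{0}^{1,\infty}(\Omega)$ from membership in every $W_{0}^{1,sp(x)}(\Omega)$ and then invokes Lemma \ref{infi}, you make that passage explicit via the modular contradiction argument with the sharp constant $\Lambda_{\infty}$ --- a more careful rendering of the same step, not a different approach.
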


\begin{proof}
Since%
\begin{equation}
\Vert\nabla w_{l}\Vert_{lp(x)}=\mu_{l}\leq\frac{\left\Vert \nabla d\right\Vert
_{lp(x)}}{\left\Vert d\right\Vert _{\infty}} \label{aux4}%
\end{equation}
we can apply Lemma \ref{infi} to get%
\begin{equation}
\limsup_{l\rightarrow\infty}\mu_{l}\leq\frac{\left\Vert \nabla d\right\Vert
_{\infty}}{\left\Vert d\right\Vert _{\infty}}=\Lambda_{\infty}. \label{aux7}%
\end{equation}

Let us take a natural $m>\alpha e,$ where $\alpha$ is given by Lemma
\ref{ineq}, and a subsequence $\left(  \mu_{l_{n}}\right)  _{n\in\mathbb{N}}$
such that
\[
\lim_{n\rightarrow\infty}\mu_{l_{n}}=\liminf_{l\rightarrow\infty}\mu_{l}.
\]

Combining Lemma \ref{ineq} with (\ref{aux4}) we conclude that the sequence
$(w_{l_{n}})_{l_{n}>m}$ is bounded in $W_{0}^{1,mp(x)}(\Omega),$ since
\[
\limsup_{n\rightarrow\infty}\left\Vert \nabla w_{l_{n}}\right\Vert
_{mp(x)}\leq\limsup_{n\rightarrow\infty}\left\Vert \nabla w_{l_{n}}\right\Vert
_{l_{n}p(x)}\leq\Lambda_{\infty}.
\]
Thus, up to a subsequence, we can assume that there exists $w_{\infty}\in
W_{0}^{1,mp(x)}(\Omega)$, such that $w_{l_{n}}$ converges to $w_{\infty},$
weakly in $W_{0}^{1,mp(x)}(\Omega)$ and uniformly in $\overline{\Omega}.$

The uniform convergence, implies that $\left\Vert w_{\infty}\right\Vert
_{\infty}=1$ (since $\left\Vert w_{l_{n}}\right\Vert _{\infty}=1$). The weak
convergence in $W_{0}^{1,mp(x)}(\Omega)$ implies that%
\begin{equation}
\left\Vert \nabla w_{\infty}\right\Vert _{mp(x)}\leq\liminf_{n\rightarrow
\infty}\left\Vert \nabla w_{l_{n}}\right\Vert _{mp(x)}.\label{aux6}%
\end{equation}

Now, applying Lemma \ref{ineq} again, we conclude that%
\[
\liminf_{n\rightarrow\infty}\left\Vert \nabla w_{l_{n}}\right\Vert
_{mp(x)}\leq\liminf_{n\rightarrow\infty}\left\Vert \nabla w_{l_{n}}\right\Vert
_{l_{n}p(x)}=\lim_{n\rightarrow\infty}\mu_{l_{n}}=\liminf_{l\rightarrow\infty
}\mu_{l}.
\]
Hence, (\ref{aux6}) yields
\begin{equation}
\left\Vert \nabla w_{\infty}\right\Vert _{mp(x)}\leq\liminf_{l\rightarrow
\infty}\mu_{l}.\label{aux5}%
\end{equation}

Repeating the above arguments we conclude that $w_{\infty}$ is the weak limit
of a subsequence of $\left(  w_{l_{n}}\right)  _{n\in\mathbb{N}}$ in
$W_{0}^{1,sp(x)}(\Omega)$, for any $s>m.$ This fact implies that $w_{\infty
}\in W_{0}^{1,\infty}(\Omega)$. Then, by making $m\rightarrow\infty$ in
(\ref{aux5}), using Lemma \ref{infi} and (\ref{aux7}) we conclude that%
\[
\Lambda_{\infty}\leq\left\Vert \nabla w_{\infty}\right\Vert _{\infty}%
\leq\liminf_{l\rightarrow\infty}\mu_{l}\leq\limsup_{l\rightarrow\infty}\mu
_{l}\leq\Lambda_{\infty},
\]
which gives (\ref{aux8}).

Since the Lipschitz constant of $w_{\infty}$ is $\left\Vert \nabla w_{\infty
}\right\Vert _{\infty}=\Lambda_{\infty}=\dfrac{1}{\left\Vert d\right\Vert
_{\infty}}$, we have%
\[
\left\Vert d\right\Vert _{\infty}\left\vert w_{\infty}(x)-w_{\infty
}(y)\right\vert \leq\left\vert x-y\right\vert
\]
for almost all $x\in\Omega$ and $y\in\partial\Omega.$ Since $w\equiv0$ on the
boundary $\partial\Omega,$ we obtain
\[
\left\Vert d\right\Vert _{\infty}w_{\infty}(x)\leq\inf_{y\in\partial\Omega
}\left\vert x-y\right\vert =d(x).
\]

\end{proof}

We show in the sequel that the functions $w_{\infty}$ and $d$ have a maximum
point in common, which is obtained as a cluster point of the sequence $\left(
x_{0}^{l}\right)  _{l\in\mathbb{N}}.$

\begin{corollary}
\label{uinf} There exists $x_{\star}\in\Omega$ such that%
\[
w_{\infty}(x_{\star})=\left\Vert w_{\infty}\right\Vert _{\infty}%
=1\quad\mathrm{and}\quad d(x_{\star})=\left\Vert d\right\Vert _{\infty}.
\]

\end{corollary}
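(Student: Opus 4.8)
The plan is to produce $x_{\star}$ as a cluster point of the maximum points $x_{0}^{l}$ associated with the subsequence $\left(  w_{l_{n}}\right)  $ furnished by Proposition \ref{aux}, and then to transfer both the maximality and the distance identity to the limit function by continuity.

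First I would record the basic facts about the maximizers. By Theorem \ref{the1}, each extremal $w_{l_{n}}$ attains its uniform norm at a single interior point $x_{0}^{l_{n}}\in\Omega$; since $w_{l_{n}}>0$ and $\left\Vert w_{l_{n}}\right\Vert _{\infty}=1,$ this means $w_{l_{n}}(x_{0}^{l_{n}})=1.$ As $\overline{\Omega}$ is compact, after passing to a further subsequence (which I will not relabel) I may assume that $x_{0}^{l_{n}}\rightarrow x_{\star}$ for some $x_{\star}\in\overline{\Omega}.$

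Next I would show that $w_{\infty}(x_{\star})=1.$ The estimate
\[
\left\vert 1-w_{\infty}(x_{\star})\right\vert =\left\vert w_{l_{n}}(x_{0}^{l_{n}})-w_{\infty}(x_{\star})\right\vert \leq\left\Vert w_{l_{n}}-w_{\infty}\right\Vert _{\infty}+\left\vert w_{\infty}(x_{0}^{l_{n}})-w_{\infty}(x_{\star})\right\vert
\]
settles this: the first term on the right tends to $0$ by the uniform convergence $w_{l_{n}}\rightarrow w_{\infty}$ in $C(\overline{\Omega})$ (Proposition \ref{aux}), while the second term tends to $0$ by the continuity of $w_{\infty}$ together with $x_{0}^{l_{n}}\rightarrow x_{\star}.$ Hence $w_{\infty}(x_{\star})=1=\left\Vert w_{\infty}\right\Vert _{\infty}.$

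Finally, I would exploit the bound (\ref{aux9}). Although it is stated for almost every $x,$ both $w_{\infty}$ and $d$ are continuous on $\overline{\Omega},$ so the inequality $w_{\infty}\leq d/\left\Vert d\right\Vert _{\infty}$ in fact holds at \emph{every} point. Evaluating at $x_{\star}$ then gives
\[
1=w_{\infty}(x_{\star})\leq\frac{d(x_{\star})}{\left\Vert d\right\Vert _{\infty}}\leq1,
\]
where the last inequality holds because $d(x_{\star})\leq\left\Vert d\right\Vert _{\infty}$ by the definition of the sup norm. Therefore $d(x_{\star})=\left\Vert d\right\Vert _{\infty},$ and in particular $d(x_{\star})=\left\Vert d\right\Vert _{\infty}>0,$ which forces $x_{\star}\in\Omega$ (a boundary point would satisfy $d=0$). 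The only step requiring genuine care is the upgrade of (\ref{aux9}) from an almost-everywhere inequality to a pointwise one; this is justified precisely by the continuity of both sides, and it is what allows the distance identity to be read off at the single point $x_{\star}.$
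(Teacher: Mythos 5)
Your argument is correct and follows essentially the same route as the paper: extract a convergent subsequence of the maximum points $x_{0}^{l_{n}}\rightarrow x_{\star}$, pass to the limit using uniform convergence to get $w_{\infty}(x_{\star})=1$, and then squeeze $1=w_{\infty}(x_{\star})\leq d(x_{\star})/\left\Vert d\right\Vert _{\infty}\leq1$ via (\ref{aux9}). The only (harmless) differences are that you deduce $x_{\star}\in\Omega$ from $d(x_{\star})>0$ rather than from $w_{\infty}(x_{\star})=1$, and that you make explicit the upgrade of the almost-everywhere bound to a pointwise one by continuity, a step the paper leaves implicit.
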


\begin{proof}
Let $\left(  w_{l_{n}}\right)  _{n\in\mathbb{N}}$ be a sequence converging
uniformly to $w_{\infty},$ which is given by Proposition \ref{aux}. Up to a
subsequence, we can assume that $x_{0}^{l_{n}}\rightarrow x_{\star}%
\in\overline{\Omega}.$ Since $w_{l_{n}}(x_{0}^{l_{n}})=1=\left\Vert w_{l_{n}%
}\right\Vert _{\infty}$ we have $w_{\infty}(x_{\star})=1=\left\Vert w_{\infty
}\right\Vert _{\infty},$ showing that $x_{\star}\in\Omega.$ The conclusion
stems from (\ref{aux9}), since%
\[
1=w_{\infty}(x_{\star})\leq\frac{d(x_{\star})}{\left\Vert d\right\Vert
_{\infty}}\leq1.
\]

\end{proof}

In the sequel we recall the concept of viscosity solutions for an equation of
the form%
\begin{equation}
H(x,u,\nabla u,D^{2}u)=0\quad\mathrm{in}\,D \label{H=0}%
\end{equation}
where $H$ is a partial differential operator of second order and $D$ denotes a
bounded domain of $\mathbb{R}^{N}.$

\begin{definition}
Let $\phi\in C^{2}(D),$ $x_{0}\in D$ and $u\in C(D).$ We say that $\phi$
touches $u$ from below at $x_{0}$ if%
\[
\phi(x_{0})=u(x_{0})\quad\mathrm{and}\quad\phi(x)<u(x),\quad x\neq x_{0}%
\]
Analogously, we say that $\phi$ touches $u$ from above at $x_{0}$ if%
\[
\phi(x_{0})=u(x_{0})\quad\mathrm{and}\quad\phi(x)>u(x),\quad x\neq x_{0}.
\]

\end{definition}

\begin{definition}
\label{subsup} We say that $u\in C(D)$ is a viscosity supersolution of the
equation (\ref{H=0}) if, whenever $\phi\in C^{2}(D)$ touches $u$ from below at
a point $x_{0}\in D$, we have%
\[
H(x_{0},\phi(x_{0}),\nabla\phi(x_{0}),D^{2}\phi(x_{0}))\leq0.
\]
Analogously, we say that $u$ is a viscosity subsolution if, whenever $\psi\in
C^{2}(D)$ touches $u$ from above at a point $x_{0}\in D$, we have%
\[
H(x_{0},\psi(x_{0}),\nabla\psi(x_{0}),D^{2}\psi(x_{0}))\geq0.
\]
And we say that $u$ is a viscosity solution, if $u$ is both a viscosity
supersolution and a viscosity subsolution.
\end{definition}

Note that the differential operator $H$ is evaluated for the test functions
only at the touching point.

In order to interpret the equation
\begin{equation}
\Delta_{p(x)}\left(  \frac{u}{K(u)}\right)  =0\quad\mathrm{in}\quad
\Omega\label{eqfra}%
\end{equation}
in the viscosity sense, we need to find the expression of the corresponding
differential operator $H.$ If $\phi$ is a function of class $C^{2}$, one can
verify that the $p(x)$-Laplacian is given by%
\[
\Delta_{p(x)}\phi=\left\vert \nabla\phi\right\vert ^{p(x)-4}\left\{
\left\vert \nabla\phi\right\vert ^{2}\Delta\phi+(p(x)-2)\Delta_{\infty}%
\phi+\left\vert \nabla\phi\right\vert ^{2}\ln\left\vert \nabla\phi\right\vert
\left\langle \nabla\phi,\nabla p\right\rangle \right\}
\]
where $\left\langle \nabla\phi,\nabla p\right\rangle =\nabla\phi\cdot\nabla p$
and $\Delta_{\infty}$ denotes the $\infty$-Laplacian defined by%
\[
\Delta_{\infty}v:=\frac{1}{2}\langle\nabla v,\nabla\left\vert \nabla
v\right\vert ^{2}\rangle=\sum_{i,j=1}^{N}\frac{\partial v}{\partial x_{i}%
}\frac{\partial v}{\partial x_{j}}\frac{\partial^{2}v}{\partial x_{i}\partial
x_{j}}.
\]

Thus, for a positive constant $t$ one can check that
\begin{equation}
\Delta_{p(x)}(t\phi)=t^{p(x)-1}\left\vert \nabla\phi\right\vert ^{p(x)-4}%
\left\{  \left\vert \nabla\phi\right\vert ^{2}\Delta\phi+(p(x)-2)\Delta
_{\infty}\phi+\left\vert \nabla\phi\right\vert ^{2}\ln(\left\vert \nabla
(t\phi)\right\vert )\left\langle \nabla\phi,\nabla p\right\rangle \right\}
\label{nhom}%
\end{equation}
and by choosing $t=K(u)^{-1}$ we obtain from (\ref{nhom}) that the equation
(\ref{eqfra}) can be rewritten in the form (\ref{H=0}) with
\begin{equation}
H(x,u,\nabla u,D^{2}u):=|\nabla u|^{p(x)-4}\left\{  \left\vert \nabla
u\right\vert ^{2}\Delta\phi+(p(x)-2)\Delta_{\infty}u+|\nabla u|^{2}\ln\left(
\frac{\left\vert \nabla u\right\vert }{K(u)}\right)  \langle\nabla u,\nabla
p\rangle\right\}  ,\label{Hpx}%
\end{equation}
where we are assuming that $p(x)\geq2.$

\begin{proposition}
\label{cara} If $u\in C(\Omega)\cap W_{0}^{1,p(x)}(\Omega)$ is a weak solution
of (\ref{eqfra}) with $K(u)=\left\Vert \nabla u\right\Vert _{p(x)},$ then $u$
is a viscosity solution of this equation.
\end{proposition}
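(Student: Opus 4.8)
The plan is to run the by-now classical contradiction scheme (Juutinen--Lindqvist--Manfredi), adapted to the variable exponent as in the references already invoked. A convenient first reduction is to set $v:=u/K(u)$, where $K(u)=\|\nabla u\|_{p(x)}$ is a fixed positive constant: then $v$ is a weak solution of the genuine $p(x)$-Laplace equation $\Delta_{p(x)}v=0$, and the identity (\ref{nhom}) together with $\ln(|\nabla u|/K(u))=\ln|\nabla v|$ gives $H(x,u,\nabla u,D^{2}u)=K(u)^{p(x)-1}\widetilde H(x,\nabla v,D^{2}v)$, where $\widetilde H$ is the expression (\ref{Hpx}) with $u$ replaced by $v$ (note $K(v)=1$). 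Since $K(u)^{p(x)-1}>0$ and $\phi$ touches $u$ from below at $x_{0}$ iff $\phi/K(u)$ touches $v$ from below at $x_{0}$, the viscosity inequalities for $u$ relative to $H$ are equivalent to those for $v$ relative to $\widetilde H$; hence it suffices to show that a $p(x)$-harmonic function is a viscosity solution of $\Delta_{p(x)}v=0$. Replacing $v$ by $-v$ and $\phi$ by $-\phi$ interchanges the super- and subsolution conditions, so I only need to treat the supersolution case.

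So let $\phi\in C^{2}$ touch $v$ from below at $x_{0}\in\Omega$ and argue by contradiction, assuming $\widetilde H(x_{0},\nabla\phi(x_{0}),D^{2}\phi(x_{0}))>0$. Suppose first that $\nabla\phi(x_{0})\neq0$. Then $\widetilde H$ is continuous near $x_{0}$, so there is a ball $B=B_{r}(x_{0})$ on which $\nabla\phi\neq0$ and $\widetilde H>0$; that is, $\phi$ is a classical, hence weak, strict subsolution $\Delta_{p(x)}\phi>0$ in $B$. Because the touching is strict, $m:=\min_{\partial B}(v-\phi)>0$, and the shift $\psi:=\phi+m$ is still a weak subsolution (a constant shift changes neither $\nabla\phi$ nor $D^{2}\phi$) with $\psi\leq v$ on $\partial B$ but $\psi(x_{0})=v(x_{0})+m>v(x_{0})$. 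Since $v$ is a weak supersolution of $\Delta_{p(x)}v=0$ in $B$, the comparison principle for the $p(x)$-Laplacian (see \cite{FZ99}) forces $\psi\leq v$ throughout $B$, contradicting $\psi(x_{0})>v(x_{0})$. This non-singular case is thus routine.

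It remains to treat the singular case $\nabla\phi(x_{0})=0$, which is where the real difficulty lies: the factor $|\nabla\phi|^{p(x)-4}$ and the term $|\nabla\phi|^{2}\ln|\nabla\phi|\langle\nabla\phi,\nabla p\rangle$ in $\widetilde H$ are undefined when the gradient vanishes. I would interpret the viscosity inequality through the lower semicontinuous envelope $\widetilde H_{*}$ of $\widetilde H$, and invoke the standard reduction (as used in \cite{FL13}) permitting one to restrict to test functions for which $D^{2}\phi(x_{0})=0$ whenever $\nabla\phi(x_{0})=0$. For such a test function every term of the bracket defining $\widetilde H$ carries a positive power of $|\nabla\phi|$, so that, using $p(x)\geq2$ and $s^{\beta}\ln s\to0$ as $s\to0^{+}$, the product $|\nabla\phi|^{p(x)-4}\{\cdots\}$ tends to $0$; hence $\widetilde H_{*}(x_{0},0,0)=0\leq0$ and the supersolution inequality holds. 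The crux, and the step I expect to be most delicate, is precisely making this envelope argument rigorous and verifying that the singular power $|\nabla\phi|^{p(x)-4}$ is dominated by the vanishing bracket as the gradient tends to zero, which is exactly why the hypothesis $p(x)\geq2$ is imposed.
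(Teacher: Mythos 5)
Your proposal is correct and follows essentially the same route as the paper: a contradiction argument in which the test function, being a strict classical subsolution of $\Delta_{p(x)}(\,\cdot\,/K(u))=0$ on a small ball around $x_{0}$ and lying below $u$ on the boundary of that ball after a constant shift, is compared with the weak solution $u$. The only real difference is that where you invoke an external comparison principle (the reference \cite{FZ99} is not where one finds it), the paper proves that step inline by testing with $(\varphi-u)_{+}\chi_{B(x_{0},r)}$ and using the monotonicity inequality $\langle|Y|^{p(x)-2}Y-|X|^{p(x)-2}X,Y-X\rangle\geq0$; your additional care with the degenerate case $\nabla\phi(x_{0})=0$ goes beyond the paper, which tacitly assumes $H$ is continuous at the touching point, i.e.\ that $\nabla\phi(x_{0})\neq0$.
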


\begin{proof}
We must prove that $u$ is both a viscosity supersolution and a viscosity
subsolution of the equation
\[
H(x,u,\nabla u,D^{2}u)=0\quad\mathrm{in}\,\Omega
\]
for the differential operator $H$ defined by (\ref{Hpx}) with $K(u)=\left\Vert
\nabla u\right\Vert _{p(x)}$. 

By hypothesis, $u$ satisfies%
\begin{equation}
\int_{D}\left\vert \frac{\nabla u}{K(u)}\right\vert ^{p(x)-2}\frac{\nabla
u}{K(u)}\cdot\nabla\eta\,\mathrm{d}x=0,\quad\forall\,\eta\in W_{0}%
^{1,p(x)}(\Omega).\label{phar}%
\end{equation}

Let us prove by contradiction that $u$ is a viscosity supersolution. Thus, we
suppose that there exist $x_{0}\in\Omega$ and $\phi\in C^{2}(\Omega)$ with
$\phi$ touching $u$ from below at $x_{0}$ and satisfying
\[
H(x_{0},\phi(x_{0}),\nabla\phi(x_{0}),D^{2}\phi(x_{0}))>0.
\]
By continuity, there exists a ball $B(x_{0},r)\subset\Omega,$ with $r$ small
enough, such that
\[
H(x,\phi(x),\nabla\phi(x),D^{2}\phi(x))>0,\quad\forall\,x\in B(x_{0},r).
\]
This means that%
\begin{equation}
\Delta_{p(x)}\left(  \frac{\phi}{K(u)}\right)  >0\quad\mathrm{in}%
\,B(x_{0},r).\label{suphar}%
\end{equation}

Let $\varphi:=\phi+\frac{m}{2}$, with $m=\min\limits_{\partial B(x_{0}%
,r)}(u-\phi)>0,$ and take
\[
\eta=(\varphi-u)_{+}\chi_{B(x_{0},r)}\in W_{0}^{1,p(x)}(\Omega).
\]
We have $\varphi<u$ on $\partial B(x_{0},r)$ and $\varphi(x_{0})>u(x_{0})$.

If $\eta\not \equiv 0$, we multiply (\ref{suphar}) by $\eta$ and integrate by
parts to obtain%
\[
\int_{D}\left\vert \frac{\nabla\phi}{K(u)}\right\vert ^{p(x)-2}\frac
{\nabla\phi}{K(u)}\cdot\nabla\eta\,\mathrm{d}x<0.
\]
Note that $\nabla\eta=\nabla\varphi-\nabla u=\nabla\phi-\nabla u$ in the set
$\left\{  \varphi>u\right\}  $ (and $\nabla\eta=0$ in $B(x_{0},r)\cap\left\{
\varphi\leq u\right\}  $). Thus, subtracting (\ref{phar}) from the above
inequality we obtain%
\[
\int_{\left\{  \varphi>u\right\}  }\left\langle \left\vert \frac{\nabla\phi
}{K(u)}\right\vert ^{p(x)-2}\frac{\nabla\phi}{K(u)}-\left\vert \frac{\nabla
u}{K(u)}\right\vert ^{p(x)-2}\frac{\nabla u}{K(u)},\nabla\eta\right\rangle
\mathrm{d}x<0
\]
implying thus that%
\begin{equation}
\int_{\left\{  \varphi>u\right\}  }\left\langle \left\vert \frac{\nabla\phi
}{K(u)}\right\vert ^{p(x)-2}\frac{\nabla\phi}{K(u)}-\left\vert \frac{\nabla
u}{K(u)}\right\vert ^{p(x)-2}\frac{\nabla u}{K(u)},\frac{\nabla\phi}%
{K(u)}-\frac{\nabla u}{K(u)}\right\rangle \mathrm{d}x<0\label{aux10}%
\end{equation}
where the domain of integration is contained in $B(x_{0},r)$. However, it is
well known that%
\[
\langle|Y|^{p-2}Y-|X|^{p-2}X,Y-X\rangle\geq0\quad\forall\,X,Y\in\mathbb{R}%
^{N}\,\mathrm{and\,}p>1.
\]
Thus, by making $Y=\frac{\nabla\phi}{K(u)}$ , $X=\frac{\nabla u}{K(u)}$ and
$p=p(x)$ we see that (\ref{aux10}) cannot occurs. Therefore, $\eta\equiv0.$
But this implies that $\varphi\leq u$ in $B(x_{0},r),$ contradicting
$\varphi(x_{0})>u(x_{0}).$

Analogously, we can show that $u$ is a viscosity subsolution.
\end{proof}

Our next result states that the function $w_{\infty}$ given by Proposition
\ref{aux} is a viscosity solution of the equation
\[
\Delta_{\infty(x)}\left(  \frac{u}{\left\Vert \nabla u\right\Vert _{\infty}%
}\right)  =0
\]
in the punctured domain $D=\Omega\setminus\{x_{\star}\},$ where $x_{\star}$ is
the maximum point of $w_{\infty}$ given by Corollary \ref{uinf} and
$\Delta_{\infty(x)}$ is the differential operator
\[
\Delta_{\infty(x)}u:=\Delta_{\infty}u+|\nabla u|^{2}\ln|\nabla u|\langle\nabla
u,\nabla\ln p\rangle.
\]
Note that if $t>0,$ then
\begin{equation}
\Delta_{\infty(x)}(\frac{u}{t}):=t^{-3}\left\{  \Delta_{\infty}u+|\nabla
u|^{2}\ln|\nabla(\frac{u}{t})|\langle\nabla u,\nabla\ln p\rangle\right\}  .
\label{aux11}%
\end{equation}

\begin{lemma}
\label{lemita}(\cite{Li15}) Suppose that $f_{n}\rightarrow f$ uniformly in
$\overline{D}$, where $f_{n},\,f\in C(\overline{D})$. If $\phi\in C^{2}(D)$
touches $f$ from below at $x_{0}\in D$, then there exists $x_{j}\rightarrow
x_{0}$ such that%
\[
f_{n_{l}}(x_{j})-\phi(x_{j})=\min_{D}\{f_{n_{j}}-\phi\}.
\]

\end{lemma}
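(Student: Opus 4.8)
Lemma \ref{lemita} is a standard compactness-type result in viscosity solution theory. Let me plan how to prove it.

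The statement is: if $f_n \to f$ uniformly on $\overline{D}$ (all continuous), and $\phi \in C^2(D)$ touches $f$ from below at $x_0$ (meaning $\phi(x_0) = f(x_0)$ and $\phi(x) < f(x)$ for $x \neq x_0$), then there exist points $x_j \to x_0$ where $f_n - \phi$ attains its minimum over $D$ (presumably over some compact neighborhood; there's a notational issue in the statement with $n_l$, $x_j$, $n_j$ — these should all be consistent).

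Let me write the plan.
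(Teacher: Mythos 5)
Your submission contains no proof. You restate the lemma, correctly observe that the hypothesis means $\phi(x_0)=f(x_0)$ and $\phi<f$ off $x_0$, and correctly flag the notational inconsistency in the indices ($n_l$, $x_j$, $n_j$ should be a single subsequence index, and the minimum should be taken over a compact neighbourhood of $x_0$ rather than all of $D$), but the text stops at ``Let me write the plan'' with no plan and no argument following. The paper itself offers no proof either, simply citing Lindqvist's notes, so there is nothing to compare approaches against; but as a proof attempt this is an empty one.

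The missing argument is short and you should supply it. Fix a closed ball $\overline{B_r(x_0)}\subset D$. Since $f-\phi$ is continuous, positive on $\overline{B_r(x_0)}\setminus\{x_0\}$ and vanishes at $x_0$, for each $0<\delta<r$ one has $m_\delta:=\min\{(f-\phi)(y):\delta\le|y-x_0|\le r\}>0$. Let $x_n$ be a minimum point of $f_n-\phi$ over $\overline{B_r(x_0)}$ (it exists by compactness). Then
\[
(f_n-\phi)(x_n)\le (f_n-\phi)(x_0)=f_n(x_0)-f(x_0)\le\Vert f_n-f\Vert_\infty,
\]
while for any $y$ with $\delta\le|y-x_0|\le r$ one has $(f_n-\phi)(y)\ge m_\delta-\Vert f_n-f\Vert_\infty$. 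Once $\Vert f_n-f\Vert_\infty<m_\delta/2$ the minimum cannot be attained in the annulus, so $|x_n-x_0|<\delta$; as $\delta$ was arbitrary, $x_n\to x_0$. Without some version of this two-sided comparison (the value at $x_0$ tends to $0$ while the values away from $x_0$ stay bounded below by a positive constant), the conclusion does not follow, so the gap is the entire substance of the lemma.
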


\begin{theorem}
The function $w_{\infty}$ is a viscosity solution of
\begin{equation}
\left\{
\begin{array}
[c]{ll}%
\Delta_{\infty(x)}\left(  \dfrac{u}{\left\Vert \nabla u\right\Vert _{\infty}%
}\right)  =0 & \mathrm{in}\quad D=\Omega\setminus\{x_{\star}\}\\
\dfrac{u}{\left\Vert \nabla u\right\Vert _{\infty}}=d & \mathrm{on}%
\quad\partial D=\partial\Omega\cup\left\{  x_{\star}\right\}  .
\end{array}
\right.  \label{infeq}%
\end{equation}

\end{theorem}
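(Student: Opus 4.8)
The plan is to realize $w_\infty$ as a limit of viscosity solutions and then invoke the stability of viscosity sub/supersolutions under uniform convergence. Fix a subsequence $(w_{l_n})$ converging uniformly to $w_\infty$ as in Proposition \ref{aux}, and recall from Corollary \ref{uinf} that the maximum points $x_0^{l_n}$ cluster at $x_\star$. The first step is to observe that each normalized extremal $w_{l_n}$ is a viscosity solution of $\Delta_{l_np(x)}(w_{l_n}/K_{l_n}(w_{l_n}))=0$ on the punctured domain $\Omega\setminus\{x_0^{l_n}\}$: choosing in \eqref{extr} test functions $\eta$ with compact support away from $x_0^{l_n}$ makes the right-hand side vanish, so $w_{l_n}$ is a weak solution of the homogeneous $l_np(x)$-Laplacian equation there, and Proposition \ref{cara} (whose proof is purely local) upgrades this to a viscosity solution for the operator $H_{l_n}$ obtained from \eqref{Hpx} upon replacing $p(x)$ by $l_np(x)$, $\nabla p$ by $l_n\nabla p$, and $K(u)$ by $\mu_{l_n}$.

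Next I would establish the supersolution property of $w_\infty$ on $D=\Omega\setminus\{x_\star\}$. Let $\phi\in C^2$ touch $w_\infty$ strictly from below at a point $x_0\in D$. By Lemma \ref{lemita} there are points $x_j\to x_0$ at which $w_{l_{n_j}}-\phi$ attains a minimum; since $x_0\neq x_\star$ while $x_0^{l_{n_j}}\to x_\star$, for large $j$ these $x_j$ lie in the region where $w_{l_{n_j}}$ is a viscosity solution, so the supersolution inequality $H_{l_{n_j}}(x_j,\nabla\phi(x_j),D^2\phi(x_j))\le 0$ holds. Assuming first $\nabla\phi(x_0)\neq 0$, I would divide this inequality by the positive quantity $l_{n_j}\,|\nabla\phi(x_j)|^{\,l_{n_j}p(x_j)-4}$ and let $j\to\infty$. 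The term $l_{n_j}^{-1}|\nabla\phi|^2\Delta\phi$ vanishes, $(p(x_j)-2l_{n_j}^{-1})\Delta_\infty\phi\to p(x_0)\Delta_\infty\phi(x_0)$, and since $K_{l_{n_j}}(w_{l_{n_j}})=\mu_{l_{n_j}}\to\Lambda_\infty=\|\nabla w_\infty\|_\infty$ by \eqref{aux8}, the logarithmic term converges to $|\nabla\phi(x_0)|^2\ln\!\big(|\nabla\phi(x_0)|/\|\nabla w_\infty\|_\infty\big)\langle\nabla\phi(x_0),\nabla p(x_0)\rangle$. Dividing by $p(x_0)>0$ and using $\nabla p/p=\nabla\ln p$ yields exactly the inequality defining a supersolution for the operator associated through \eqref{aux11} to $\Delta_{\infty(x)}(u/\|\nabla u\|_\infty)=0$ with $t=\|\nabla w_\infty\|_\infty$. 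The subsolution property follows symmetrically, touching $w_\infty$ from above and applying Lemma \ref{lemita} to the sequence $-w_{l_{n_j}}$ and $-\phi$, which reverses all inequalities.

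The main obstacle is the degenerate case $\nabla\phi(x_0)=0$, where the prelimit operators $H_{l_n}$ carry the singular factors $|\nabla\phi|^{\,l_np(x)-4}$ and $\ln|\nabla\phi|$ and cannot be divided through as above. Here I would not pass to the limit at all, but instead verify the required condition directly for the limiting operator: with the standard convention $s^2\ln s\to 0$ as $s\to 0^+$, the term $\Delta_\infty\phi(x_0)$ and the logarithmic term both vanish when $\nabla\phi(x_0)=0$, so the supersolution inequality $H_\infty(x_0,0,D^2\phi(x_0))\le 0$ reads $0\le 0$ and holds trivially, with the analogous statement for subsolutions. A secondary technical point, handled by the $C^2$-regularity of $\phi$ together with the continuity of $p$ and $\nabla p$, is to control all lower-order contributions uniformly in $j$ after normalizing by $l_{n_j}\,|\nabla\phi(x_j)|^{\,l_{n_j}p(x_j)-4}$; note also that $l_np(x)\ge l_np^->N\ge2$ guarantees that the form \eqref{Hpx} is legitimate for every prelimit equation.

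Finally, the boundary condition in \eqref{infeq} is immediate from the construction. On $\partial\Omega$ one has $w_\infty=0=d$, so $w_\infty/\|\nabla w_\infty\|_\infty=0=d$ there; at $x_\star$ Corollary \ref{uinf} gives $w_\infty(x_\star)=1$ and $d(x_\star)=\|d\|_\infty$, and since $\|\nabla w_\infty\|_\infty=\Lambda_\infty=1/\|d\|_\infty$ by \eqref{aux8}, we get $w_\infty(x_\star)/\|\nabla w_\infty\|_\infty=\|d\|_\infty=d(x_\star)$. Hence the identity $w_\infty/\|\nabla w_\infty\|_\infty=d$ holds on $\partial D=\partial\Omega\cup\{x_\star\}$, completing the verification that $w_\infty$ is a viscosity solution of \eqref{infeq}.
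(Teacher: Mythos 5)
Your proposal is correct and follows essentially the same route as the paper: boundary values from Corollary \ref{uinf} and $\Lambda_{\infty}=1/\left\Vert d\right\Vert _{\infty}$, locality of the equation away from the Dirac mass, Proposition \ref{cara} to pass from weak to viscosity solutions of $H_{l_{n}}=0$, Lemma \ref{lemita} to produce touching points $y_{n}\rightarrow x_{0}$, division by the singular factor, and the trivial treatment of the degenerate case $\nabla\phi(x_{0})=0$. The only cosmetic difference is that the paper perturbs the test function to $\phi_{n}(x)=\phi(x)+m_{n}-|x-y_{n}|^{4}$ so that the touching from below at $y_{n}$ is strict as required by Definition \ref{subsup}, a step you leave implicit but which does not affect the derivatives at $y_{n}$.
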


\begin{proof}
Taking into account that $\left\Vert \nabla w_{\infty}\right\Vert _{\infty
}=\Lambda_{\infty}$ we just need to show that $w_{\infty}$ satisfies
\[
\left\{
\begin{array}
[c]{ll}%
\Delta_{\infty(x)}\left(  \dfrac{u}{\Lambda_{\infty}}\right)  =0 &
\mathrm{in}\quad D=\Omega\setminus\{x_{\star}\}\\
\dfrac{u}{\Lambda_{\infty}}=d & \mathrm{on}\quad\partial D=\partial\Omega
\cup\left\{  x_{\star}\right\}
\end{array}
\right.
\]
in the viscosity sense.

Since $\Lambda_{\infty}=\dfrac{1}{\left\Vert d\right\Vert _{\infty}}$, it
follows from Corollary \ref{uinf} that
\[
\frac{w_{\infty}(x_{\star})}{\Lambda_{\infty}}=w_{\infty}(x_{\star})\left\Vert
d\right\Vert _{\infty}=d(x_{\star}).
\]
Thus, taking into account that $w_{\infty}|_{\partial\Omega}=0=d|_{\partial
\Omega},$ we conclude that $\dfrac{w_{\infty}}{\Lambda_{\infty}}=d$ on
$\partial\Omega\cup\{x_{\star}\}.$

In order to show that $w_{\infty}$ is a viscosity supersolution, let $x_{0}%
\in\Omega\setminus\{x_{\star}\}$ and $\phi\in C^{2}(\Omega\setminus\{x_{\star
}\})$ be such that $\phi$ touches $w_{\infty}$ from below at $x_{0}$, i.e.%
\[
\phi(x_{0})=w_{\infty}(x_{0})\quad\mathrm{and}\quad\phi(x)<w_{\infty
}(x),\ \mathrm{for}\ x\neq x_{0}.
\]

We claim that
\[
\Delta_{\infty(x)}\left(  \frac{\phi(x_{0})}{\Lambda_{\infty}}\right)  \leq0,
\]
where the expression of the differential operator is given by (\ref{aux11}).
Since the above inequality holds trivially when $\nabla\phi(x_{0})=0,$ we
assume that $\left\vert \nabla\phi(x_{0})\right\vert \neq0.$ So, let us take a
ball $B_{\epsilon}(x_{0})\subset\Omega\setminus\{x_{\star}\}$ such that%
\[
\left\vert \nabla\phi(x)\right\vert \neq0\quad\mathrm{in}\,B_{\epsilon}%
(x_{0}).
\]

Proposition \ref{aux} and its Corollary \ref{uinf} guarantee the existence of
a subsequence of indexes $\left(  l_{n}\right)  _{n\in\mathbb{N}}$ such that
$w_{l_{n}}\rightarrow w_{\infty}$ in $C(\overline{\Omega})$ and $x_{0}^{l_{n}%
}\rightarrow x_{\star},$ where $x_{0}^{l_{n}}$ denotes a maximum point of
$w_{l_{n}}.$ It follows that $w_{l_{n}}\rightarrow w_{\infty}$ uniformly in
$\overline{B_{\epsilon}(x_{0})}$ and $x_{0}^{l_{n}}\notin B_{\epsilon}(x_{0})$
for all $n$ large enough.

Applying Lemma \ref{lemita} to $\overline{B_{\epsilon}(x_{0})}$ we can assume
that (up to pass to another subsequence) there exists $y_{n}\rightarrow x_{0}$
such that%
\[
m_{n}:=\min_{B_{\epsilon}(x_{0})}\{w_{l_{n}}-\phi\}=w_{l_{n}}(y_{n}%
)-\phi(y_{n}).
\]
Thus, the function $\phi_{n}(x):=\phi(x)+m_{n}-|x-y_{n}|^{4},$ which belongs
to $C^{2}(B_{\epsilon}(x_{0})),$ satisfies%
\[
\phi_{n}(y_{n})=\phi(y_{n})+m_{n}=w_{l_{n}}(y_{n})\quad\mathrm{and}\quad
\phi_{n}(x)\leq w_{l_{n}}(x)-|x-y_{n}|^{4}<w_{l_{n}}(x)
\]
for all $x\in B_{\epsilon}(x_{0})\setminus\{y_{n}\}$. That is, $\phi_{n}$
touches $w_{l_{n}}$ from below at $y_{n}$.

Let $H_{l}$ denote the differential operator associated with the equation
$\Delta_{lp(x)}\left(  u/\mu_{l}\right)  =0,$ that is,%
\[
H_{l}(x,u,\nabla u,D^{2}u):=\left\vert \nabla u\right\vert ^{lp(x)-4}\left\{
\left\vert \nabla u\right\vert ^{2}\Delta\phi+(lp(x)-2)\Delta_{\infty
}u+\left\vert \nabla u\right\vert ^{2}\ln\left(  \frac{\left\vert \nabla
u\right\vert }{\mu_{l}}\right)  \langle\nabla u,l\nabla p\rangle\right\}  .
\]
(Recall that $\mu_{l}=\left\Vert \nabla w_{l}\right\Vert _{lp(x)}%
\rightarrow\Lambda_{\infty}$ as $l\rightarrow\infty.$)

It follows from Proposition\ \ref{cara} that $w_{l}$ is a viscosity
(super)solution of the equation%
\[
H_{l}(x,u,\nabla u,D^{2}u)=0.
\]
Hence, taking $\phi_{n}$ as a test function for $w_{l_{n}}$, it follows that%
\[
H_{l_{n}}(y_{n},\phi_{n}(y_{n}),\nabla\phi_{n}(y_{n}),D^{2}\phi_{n}%
(y_{n}))\leq0.
\]
This means that%
\[%
\begin{array}
[c]{rr}%
\left\vert \nabla\phi_{n}(y_{n})\right\vert ^{l_{n}p(x)-4}\left\{  \left\vert
\nabla\phi_{n}(y_{n})\right\vert ^{2}\Delta\phi_{n}(y_{n})+(l_{n}%
p(y_{n})-2)\Delta_{\infty}\phi_{n}(y_{n})\right.   & \\
& \\
+\left.  \left\vert \nabla\phi_{n}(y_{n})\right\vert ^{2}\ln\left(  \left\vert
\nabla\phi_{n}(y_{n})\right\vert /\mu_{l_{n}}\right)  \langle\nabla\phi
_{n}(y_{n}),l_{n}\nabla p(y_{n})\rangle\right\}   & \leq0.
\end{array}
\]

Since%
\[
\nabla\phi_{n}(y_{n})=\nabla\phi(y_{n}),\quad\Delta\phi_{n}(y_{n})=\Delta
\phi(y_{n})\quad\mathrm{and}\quad\Delta_{\infty}\phi_{n}(y_{n})=\Delta
_{\infty}\phi(y_{n})
\]
and $\nabla\phi(y_{n})\neq0$ for $n$ large enough, we have
\[%
\begin{array}
[c]{rr}%
\left\vert \nabla\phi(y_{n})\right\vert ^{l_{n}p(x)-4}\left\{  \left\vert
\nabla\phi(y_{n})\right\vert ^{2}\Delta\phi(y_{n})+(l_{n}p(y_{n}%
)-2)\Delta_{\infty}\phi(y_{n})\right.  & \\
& \\
+\left.  \left\vert \nabla\phi(y_{n})\right\vert ^{2}\ln\left(  \left\vert
\nabla\phi(y_{n})\right\vert /\mu_{l_{n}}\right)  \langle\nabla\phi
(y_{n}),l_{n}\nabla p(y_{n})\rangle\right\}  & \leq0.
\end{array}
\]

Dividing this inequality by $(l_{n}p(y_{n})-2)\left\vert \nabla\phi
(y_{n})\right\vert ^{l_{n}p(x)-4}$, we obtain%
\[
\frac{\left\vert \nabla\phi(y_{n})\right\vert ^{2}\Delta\phi(y_{n})}%
{l_{n}p(y_{n})-2}+\Delta_{\infty}\phi(y_{n})+\left\vert \nabla\phi
(y_{n})\right\vert ^{2}\ln\left(  \frac{\left\vert \nabla\phi(y_{n}%
)\right\vert }{\mu_{l_{n}}}\right)  \left\langle \nabla\phi(y_{n}%
),\frac{\nabla p(y_{n})}{p(y_{n})-2/l_{n}}\right\rangle \leq0.
\]
Then, by making $n\rightarrow\infty$ we arrive at%
\[
\Delta_{\infty}\phi(x_{0})+\left\vert \nabla\phi(x_{0})\right\vert ^{2}%
\ln\left(  \frac{\left\vert \nabla\phi(x_{0})\right\vert }{\Lambda_{\infty}%
}\right)  \left\langle \nabla\phi(x_{0}),\frac{\nabla p(x_{0})}{p(x_{0}%
)}\right\rangle \leq0.
\]
According to (\ref{aux11}) this implies that%
\[
\Delta_{\infty(x)}\left(  \frac{\phi(x_{0})}{\Lambda_{\infty}}\right)  \leq0
\]
and we conclude thus the proof that $w_{\infty}$ is a viscosity supersolution.

Analogously, we can show that if $\psi\in C^{2}(\Omega\setminus\{x_{\star}\})$
touches $w_{\infty}$ from above at the point $x_{0}$, then%
\[
\Delta_{\infty(x)}\left(  \frac{\psi(x_{0})}{\Lambda_{\infty}}\right)  \geq0.
\]

Therefore, $w_{\infty}$ satisfies (\ref{infeq}) in the viscosity sense.
\end{proof}

The following uniqueness result can be found in \cite{Li10}.

\begin{proposition}
(\cite{Li10}, Theorem 1.2)\label{unidir} Let $D$ be a bounded domain of
$\mathbb{R}^{N}$ and $f:\partial D\rightarrow\mathbb{R}$ be a Lipschitz
continuous function. There exists a unique viscosity solution $u\in
C(\overline{D})\cap W^{1,\infty}(D)$ for the Dirichlet boundary value problem%
\[
\left\{
\begin{array}
[c]{ll}%
\Delta_{\infty(x)}u=0 & \quad\mathrm{in\,}D\\
u=f & \quad\mathrm{on\,}\partial D.
\end{array}
\right.
\]

\end{proposition}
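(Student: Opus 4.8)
The statement bundles existence and uniqueness for the Dirichlet problem of the strongly degenerate operator $\Delta_{\infty(x)}u=\Delta_\infty u+|\nabla u|^2\ln|\nabla u|\,\langle\nabla u,\nabla\ln p\rangle$, so I would separate the two and treat uniqueness first, since the Perron construction of a solution rests on a comparison principle. The plan is to write the equation as $F(x,\nabla u,D^2u)=0$ with $F(x,\xi,X):=\langle X\xi,\xi\rangle+|\xi|^2\ln|\xi|\,\langle\xi,b(x)\rangle$, where $b:=\nabla\ln p=\nabla p/p$ is continuous because $p\in C^1$, and $|\xi|^2\ln|\xi|$ is interpreted as $0$ at $\xi=0$ (using the semicontinuous envelopes $F_*,F^*$ to handle the singular gradient dependence, exactly as for the pure infinity Laplacian). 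One first records that $F$ is degenerate elliptic, its second-order part $\langle X\xi,\xi\rangle$ being monotone in $X$, so the viscosity framework of Definition \ref{subsup} applies.

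The heart is the comparison principle: if $u$ is a subsolution, $v$ a supersolution, and $u\le v$ on $\partial D$, then $u\le v$ in $D$. Assuming for contradiction that $u-v$ has a positive interior maximum, I would double variables, maximizing $\Phi_\varepsilon(x,y)=u(x)-v(y)-\tfrac{1}{2\varepsilon}|x-y|^2$ at $(x_\varepsilon,y_\varepsilon)$, obtaining $p_\varepsilon=(x_\varepsilon-y_\varepsilon)/\varepsilon$ with $|x_\varepsilon-y_\varepsilon|\to0$ and $x_\varepsilon,y_\varepsilon\to\hat x$ interior. The Crandall--Ishii theorem on sums produces symmetric $X,Y$ in the appropriate second-order jets; testing its matrix inequality on the diagonal vector $(p_\varepsilon,p_\varepsilon)$ gives $\langle(X-Y)p_\varepsilon,p_\varepsilon\rangle\le0$. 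Subtracting the sub- and supersolution inequalities yields $\langle(X-Y)p_\varepsilon,p_\varepsilon\rangle+|p_\varepsilon|^2\ln|p_\varepsilon|\,\langle p_\varepsilon,b(x_\varepsilon)-b(y_\varepsilon)\rangle\ge0$. The first-order remainder is controlled once one has Lipschitz bounds: these force $|p_\varepsilon|\le 2L$, so $|p_\varepsilon|^3|\ln|p_\varepsilon||$ stays bounded, while $|b(x_\varepsilon)-b(y_\varepsilon)|\to0$ by uniform continuity of $b$ together with $x_\varepsilon,y_\varepsilon\to\hat x$; hence that term vanishes in the limit.

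The genuine obstacle is that the calculation then collapses to $0\ge0$, producing no contradiction -- this is the classical non-closure of comparison for the infinity Laplacian, where degeneracy prevents any strict gain from the matrix inequality. I would resolve it by Jensen's device of perturbing to strict sub/supersolutions: replace $v$ by $v_\theta$ arranged to satisfy $F(x,\nabla v_\theta,D^2v_\theta)\le-\theta<0$ (for instance through the approximate equations $\Delta_{\infty(x)}u=\pm\theta$, or by adding a suitably concave smooth correction adapted to the operator). For the strict problem the collapse becomes $0\ge\theta>0$, the desired contradiction, and letting $\theta\to0^+$ delivers comparison for the original equation; an equivalent route is to pass to sup/inf-convolutions (Jensen's lemma) to gain semiconvexity and pointwise second-order information, or to exploit a comparison-with-cones characterization modified to absorb the logarithmic drift $|\nabla u|^2\ln|\nabla u|\langle\nabla u,b\rangle$. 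Uniqueness is then immediate, applying comparison with the two solutions in both orders.

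For existence I would invoke Perron's method: the supremum of all subsolutions dominated by a fixed supersolution is a viscosity solution, provided barriers exist at each boundary point. Since $f$ is Lipschitz, its McShane--Whitney extension together with affine and cone-type functions $x\mapsto C|x-z|+c$ serve as barriers once $C$ is taken large relative to the $C^1$-norm of $p$, so that the lower-order logarithmic term is dominated by the infinity-Laplacian part; this yields $u\in C(\overline D)$ attaining the data continuously, and the a priori Lipschitz estimate places $u\in W^{1,\infty}(D)$. The comparison principle then certifies this solution is unique, completing the proof. The hardest step, as indicated, is closing comparison against the degeneracy of $\Delta_\infty$; the remaining work is a careful but essentially standard transfer of the constant-exponent theory to the continuous $x$-dependence coming from $p\in C^1$.
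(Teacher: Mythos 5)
This proposition is not proved in the paper at all: it is imported verbatim from Lindqvist's article \cite{Li10} (Theorem 1.2 there), so there is no internal argument to compare yours against. Judged on its own merits, your sketch correctly identifies the standard architecture (comparison principle plus a construction of the solution) and, to your credit, correctly locates the crux: the doubling-of-variables computation degenerates to $0\geq 0$. But that crux is exactly where the proposal stops being a proof. You offer a menu of possible fixes -- perturbing $v$ to a strict supersolution of $\Delta_{\infty(x)}u=\pm\theta$, sup/inf-convolutions, a comparison-with-cones variant absorbing the drift -- without executing or even checking the feasibility of any of them for this operator. The term $|\nabla u|^{2}\ln|\nabla u|\,\langle\nabla u,\nabla\ln p\rangle$ changes sign with $|\nabla u|$ across the value $1$ and its direction is dictated by the fixed field $\nabla\ln p$, so none of the constant-exponent devices transfers routinely; producing a usable strict perturbation is the actual content of Lindqvist's uniqueness theorem, and it is precisely the step your argument defers. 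Note also that your bound $|p_{\varepsilon}|\leq 2L$ presupposes that both the sub- and the supersolution are Lipschitz, so at best you would obtain comparison within the class $C(\overline{D})\cap W^{1,\infty}(D)$; that happens to suffice for the uniqueness asserted here, but it should be stated rather than slipped in.

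There is also a concrete error in the existence half. You propose cone barriers $x\mapsto C|x-z|+c$ with $C$ large ``so that the lower-order logarithmic term is dominated by the infinity-Laplacian part.'' For a cone, $\Delta_{\infty}\bigl(C|x-z|\bigr)=0$ identically away from the vertex, so there is nothing available to dominate anything: the operator evaluated on the cone reduces to $C^{3}\ln C\,\langle\nu,\nabla\ln p\rangle$ with $\nu=(x-z)/|x-z|$, a quantity of indefinite sign that grows with $C$. Cones are therefore not barriers for $\Delta_{\infty(x)}$ as they stand; one must bend them (e.g.\ use $C|x-z|^{\alpha}$ with $\alpha<1$, whose $\infty$-Laplacian is strictly negative of order $|x-z|^{3\alpha-4}$ near the vertex and can absorb the logarithmic term there) and then still control the region away from the vertex. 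Finally, for what it is worth, in the cited source the solution is not produced by Perron's method at all but as the uniform limit of the $jp(x)$-harmonic functions with boundary data $f$ as $j\to\infty$, with uniqueness established separately; so even the overall route differs from the one you propose. As written, the proposal is a plausible outline with the two hardest steps (closing comparison, building barriers) either missing or incorrect.
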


It follows from this result, with $f=d$ and $D=\Omega\setminus\{x_{\star}\},$
that $w_{\infty}$ is the only solution of the Dirichlet problem (\ref{infeq}).

Following the ideas of \cite{YY08} and \cite{EP16} we give a condition on
$\Omega$ that leads to the equality $w_{\infty}=d/\left\Vert d\right\Vert
_{\infty}.$ For this we recall that $d\in C^{1}\left(  \Omega\setminus
\mathcal{R}(\Omega)\right)  $ where $\mathcal{R}(\Omega)$ denotes the
\textit{ridge of} $\Omega,$ defined as the set of all points in $\Omega$ whose
distance to the boundary is reached at least at two points (see \cite{BDM,
EH}). Notice that $\mathcal{R}(\Omega)$ contains the maximum points of $d.$
Since $d$ is a viscosity solution of the eikonal equation $\left\vert \nabla
u\right\vert =1$ in $\Omega,$ it is simple to check that $\Delta_{\infty
(x)}d=0$ in $\Omega\setminus\mathcal{R}(\Omega)$ in the viscosity sense.

\begin{proposition}
\label{cond} If $\mathcal{R}(\Omega)$ is a singleton set, then $w_{l}%
\rightarrow\dfrac{d}{\left\Vert d\right\Vert _{\infty}}$ uniformly in
$\overline{\Omega}$ and $x_{0}^{l}\rightarrow x_{\star}.$
\end{proposition}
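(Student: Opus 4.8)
The plan is to use the singleton hypothesis to pin down, uniquely and independently of any subsequence, both the blow-up point $x_\star$ and the limit function $w_\infty$, and then to upgrade the subsequential convergence coming from Proposition \ref{aux} to convergence of the full sequence.

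First I would locate $x_\star$. By the compactness in Proposition \ref{aux} together with Corollary \ref{uinf}, any cluster point of $(x_0^l)_{l\in\mathbb{N}}$ arises as a limit $x_0^{l_n}\to x_\star$ along a subsequence for which $w_{l_n}\to w_\infty$ uniformly in $\overline{\Omega}$ and (\ref{aux8}), (\ref{aux9}) hold. Corollary \ref{uinf} then gives $d(x_\star)=\Vert d\Vert_\infty$, so $x_\star$ is a maximum point of $d$ and hence belongs to $\mathcal{R}(\Omega)$. Since $\mathcal{R}(\Omega)$ is assumed to be a single point, we get $\mathcal{R}(\Omega)=\{x_\star\}$; thus every cluster point of $(x_0^l)$ coincides with this one point, and since $\overline{\Omega}$ is compact this forces $x_0^l\to x_\star$.

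Next I would identify the limit by a uniqueness argument on the punctured domain $D=\Omega\setminus\{x_\star\}$. On one hand, the theorem preceding Proposition \ref{unidir} shows that $w_\infty/\Lambda_\infty$ is a viscosity solution of $\Delta_{\infty(x)}u=0$ in $D$ with $u=d$ on $\partial D=\partial\Omega\cup\{x_\star\}$. On the other hand, because $\mathcal{R}(\Omega)=\{x_\star\}$, the fact recalled just above the statement, namely that $\Delta_{\infty(x)}d=0$ in $\Omega\setminus\mathcal{R}(\Omega)$ in the viscosity sense, says precisely that $d$ itself is a viscosity solution of $\Delta_{\infty(x)}d=0$ in $D$, and trivially $d=d$ on $\partial D$. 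Both functions lie in $C(\overline{D})\cap W^{1,\infty}(D)$, so Proposition \ref{unidir}, applied with $f=d|_{\partial D}$, yields $w_\infty/\Lambda_\infty=d$ in $\overline{\Omega}$. As $\Lambda_\infty=1/\Vert d\Vert_\infty$, this is exactly $w_\infty=d/\Vert d\Vert_\infty$.

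Finally I would conclude. The previous step shows that every uniformly convergent subsequence of $(w_l)$ has the same limit $d/\Vert d\Vert_\infty$, regardless of which subsequence was chosen. Combined with the compactness of Proposition \ref{aux}, which guarantees that every subsequence of $(w_l)$ admits a uniformly convergent further subsequence, a routine subsequence argument gives $w_l\to d/\Vert d\Vert_\infty$ uniformly in $\overline{\Omega}$, completing the proof. I expect the only genuinely delicate point to be the justification that $d$ solves the variable-exponent infinity equation on the \emph{full} punctured domain $D$: $d$ is of class $C^1$ and solves the eikonal equation away from $\mathcal{R}(\Omega)$, but it generally fails to be a solution across the ridge, and it is exactly the singleton hypothesis that shrinks the ridge to the single removed point $x_\star$, leaving no obstruction in $D$. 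The remaining steps are bookkeeping with the already-established compactness and uniqueness results.
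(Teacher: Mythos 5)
Your proposal is correct and follows essentially the same route as the paper: use Corollary \ref{uinf} to identify $\mathcal{R}(\Omega)=\{x_{\star}\}$, observe that $d$ is then a viscosity solution of $\Delta_{\infty(x)}u=0$ on the full punctured domain $\Omega\setminus\{x_{\star}\}$, invoke the uniqueness of Proposition \ref{unidir} to get $w_{\infty}=d/\left\Vert d\right\Vert _{\infty}$, and conclude by the standard subsequence argument. Your write-up is in fact slightly more explicit than the paper's about the two points it leaves implicit, namely that every cluster point of $(x_{0}^{l})$ must lie in $\mathcal{R}(\Omega)$ and that the compactness of Proposition \ref{aux} applies to every subsequence, so the full sequences converge.
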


\begin{proof}
It follows from Corollary \ref{uinf} that $\mathcal{R}(\Omega)=\left\{
x_{\star}\right\}  ,$ since $x_{\star}$ is a maximum point of $d.$ Therefore,
$d\in C^{1}\left(  \Omega\setminus\left\{  x_{\star}\right\}  \right)  $ what
implies that $d$ is a viscosity solution of
\[
\left\{
\begin{array}
[c]{ll}%
\Delta_{\infty(x)}u=0 & \quad\mathrm{in\,}\Omega\setminus\left\{  x_{\star
}\right\} \\
u=d & \quad\mathrm{on\,}\partial\left(  \Omega\setminus\left\{  x_{\star
}\right\}  \right)  =\partial\Omega\cup\left\{  x_{\star}\right\}  .
\end{array}
\right.
\]
Therefore, by the uniqueness stated in Proposition \ref{unidir} (with
$D:=\Omega\setminus\left\{  x_{\star}\right\}  )$ we have $\dfrac{w_{\infty}%
}{\Lambda_{\infty}}=d,$ that is%
\[
w_{\infty}=\frac{d}{\left\Vert d\right\Vert _{\infty}}\quad\mathrm{in\,}%
\overline{\Omega}%
\]

These arguments imply that $d/\left\Vert d\right\Vert _{\infty}$ is the only
limit function of any uniformly convergent subsequence of $\left(
w_{l}\right)  _{l\in N}$ and also that $x_{\star}$ is the only cluster point
of the numerical sequence $\left(  x_{0}^{l}\right)  _{l\in N}.$
\end{proof}

Balls, ellipses and other symmetric sets are examples of domains whose ridge
is a singleton set.

\section{Acknowledgements}

C. O. Alves was partially supported by CNPq/Brazil (304036/2013-7) and
INCT-MAT. G. Ercole was partially supported by CNPq/Brazil (483970/2013-1 and
306590/2014-0) and Fapemig/Brazil (APQ-03372-16).

\end{document}